\newtheorem*{theorema}{Theorem A}
\newtheorem*{theoremb1}{Theorem B1}
\newtheorem*{theoremb2}{Theorem B2}
\newtheorem*{theoremc1}{Theorem C1}
\newtheorem*{theoremc2}{Theorem C2}
\newtheorem*{theoremc3}{Theorem C3}
\newtheorem*{theoremd}{Theorem D}
\newtheorem{theorem}{Theorem}[section]
\newtheorem{claim}{Claim}[theorem]
\newtheorem{lemma}[theorem]{Lemma}
\newtheorem{fact}[theorem]{Fact}
\newtheorem{prop}[theorem]{Proposition}
\newtheorem{corollary}[theorem]{Corollary}
\theoremstyle{definition}
\newtheorem{definition}[theorem]{Definition}
\newtheorem*{defn}{Definition}
\numberwithin{equation}{section}
\theoremstyle{remark}
\newtheorem*{remarks}{Remarks}
\newcommand{\zfc}{\mathnormal{\mathsf{ZFC}}}
\newcommand{\ch}{\mathnormal{\mathsf{CH}}}
\newcommand{\gch}{\mathnormal{\mathsf{GCH}}}
\newcommand{\fs}{\mathnormal{\mathrm{FS}}}
\newcommand{\fu}{\mathnormal{\mathrm{FU}}}
\newcommand{\sumsets}{\mathnormal{\mathrm{SuS}}}
\newcommand{\pair}[1]{\langle#1\rangle}
\def\s{\subseteq}
\def\symdiff{\mathbin\bigtriangleup}
\def\Mid{\mathrel{\bigg|}}
\def\br{\blacktriangleright}
\def\orderunion{\sqcup}
\DeclareMathOperator{\ord}{ord}
\DeclareMathOperator{\pr}{Pr}
\DeclareMathOperator{\pp}{pp}
\DeclareMathOperator{\im}{Im}
\DeclareMathOperator{\osc}{osc}
\DeclareMathOperator{\axiom}{S}
\DeclareMathOperator{\axiomi}{S^*}
\DeclareMathOperator{\supp}{supp}
\DeclareMathOperator{\cf}{cf}
\DeclareMathOperator{\otp}{otp}
\author[D. Fern\'andez]{David Fern\'andez-Bret\'on}
\address{
Department of Mathematics\\
University of Michigan\\
2074 East Hall, 530 Church Street \\
Ann Arbor, MI 48109-1043, U.S.A.}
\urladdr{http://www-personal.umich.edu/\textasciitilde djfernan/}
\author[A. Rinot]{Assaf Rinot}
\address{Department of Mathematics, Bar-Ilan University, Ramat-Gan 52900, Israel.}
\urladdr{http://www.assafrinot.com}
\thanks{The first author was partially supported by Postdoctoral Fellowship
number 263820/275049 from the Consejo Nacional de Ciencia y Tecnolog\'{\i}a
(CONACyT), Mexico.
The second author was partially supported by the Israel Science Foundation (grant $\#$1630/14).}
\keywords{Hindman's Theorem, commutative cancellative semigroups, strong coloring, J\'onsson cardinal.}
\subjclass[2010]{Primary 03E02; Secondary 03E75, 03E35, 05D10, 05A17, 11P99, 20M14.}
\begin{document}

\title[Strong failures of Hindman's Theorem]{Strong failures of higher analogs of Hindman's Theorem}

\begin{abstract}
We show that various analogs of Hindman's Theorem fail in a strong sense
when one attempts to obtain uncountable monochromatic sets:

{\bf Theorem 1.} There exists a colouring $c:\mathbb R\rightarrow\mathbb Q$, such that
for every $X\subseteq\mathbb R$ with $|X|=|\mathbb R|$, and every colour $\gamma\in\mathbb Q$,
there are two distinct elements $x_0,x_1$ of $X$ for which $c(x_0+x_1)=\gamma$.
This forms a simultaneous generalization of a theorem of Hindman, Leader and Strauss and a theorem of Galvin and Shelah.

{\bf Theorem 2.} For every Abelian group $G$,
there exists a colouring $c:G\rightarrow\mathbb Q$ such that for every uncountable $X\subseteq G$, and every colour $\gamma$,
for some large enough integer $n$,
there are pairwise distinct elements $x_0,\ldots,x_n$ of $X$ such that $c(x_0+\cdots+x_n)=\gamma$.
In addition, it is consistent that the preceding statement remains valid even after enlarging the set of colours from $\mathbb Q$ to $\mathbb R$.

{\bf Theorem 3.} Let $\circledast_\kappa$ assert that
for every Abelian group $G$ of cardinality $\kappa$,
there exists a colouring $c:G\rightarrow G$
such that for every positive integer $n$, every  $X_0,\ldots,X_n \in[G]^\kappa$, and every $\gamma\in G$,
there are $x_0\in X_0,\ldots, x_n\in X_n$ such that $c(x_0+\cdots+x_n)=\gamma$.
Then $\circledast_\kappa$ holds for unboundedly many uncountable cardinals $\kappa$,
and it is consistent that $\circledast_\kappa$ holds for \emph{all} regular uncountable cardinals $\kappa$.

\end{abstract}
\dedicatory{This paper is dedicated to the memory of Andr\'as Hajnal (1931--2016)}
\date{\today}
\maketitle

\section{Introduction}

In one of its more general forms, Hindman's
Theorem (see~\cite[Corollary 5.9]{MR2893605} for the
general form; the particular case $G=\mathbb N$ was originally proved in~\cite{MR0349574})
asserts that whenever a commutative cancellative semigroup $G$ is
partitioned into two
cells (i.e., coloured with two colours), there exists an infinite
$X\subseteq G$ such that the set of its \textit{finite sums}
\begin{equation*}
\fs(X):=\left\{\sum_{x\in a}x\Mid a\in[X]^{­<\omega}\right\}
\end{equation*}
is completely contained in one of the cells of the
partition (i.e., $\fs(X)$ is \emph{monochromatic}). The infinite set $X\subseteq G$
constructed in the proof of this theorem is countable,
so it is natural to ask whether it is possible to find,
given a colouring of an uncountable commutative cancellative
semigroup $G$, a
subset $X\subseteq G$ of a given uncountable cardinality
such that
$\fs(X)$ is monochromatic. This question was answered
in the negative in~\cite{fb-on-hindman}, where, given
a commutative cancellative semigroup $G$, a colouring with
two colours of
$G$ is exhibited such that no uncountable $X\subseteq G$
can have $\fs(X)$ monochromatic. A related result for the
particular case of the group
$\mathbb R$ can be
found in~\cite{hindman-leader-strauss}, where
a colouring of $\mathbb R$ with two colours is exhibited,
satisfying that whenever $X\subseteq\mathbb R$ has the same cardinality
as $\mathbb R$, then not only is $\fs(X)$ not monochromatic,
but even $\fs_2(X):=\{x+y\mid x,y\in X\text{ distinct}\}$
is not monochromatic. In particular, assuming the Continuum
Hypothesis ($\ch$),\footnote{$\ch$ asserts that $|\mathbb R|=\omega_1$, that is, that there
do not exist any cardinalities strictly between that of
$\mathbb N$ and that of $\mathbb R$.} this result implies that
for the aforementioned colouring of $\mathbb R$, every uncountable
subset $X$ is such that $\fs_2(X)$ is not monochromatic.

In this paper we shall consider stronger versions
of these results, where colourings of uncountable commutative cancellative
semigroups $G$ are obtained, with more than two colours,
satisfying that for every uncountable $X$ not only is $\fs(X)$ not
monochromatic, but in fact $\fs(X)$ contains occurrences of
every possible colour. In order to properly state our results,
we will introduce some new notation, which is inspired by the
standard notation from Ramsey theory. Following \cite[p.~56]{MR795592},
for cardinals
$\kappa,\lambda,\theta,\mu$, write
\begin{equation*}
\kappa\rightarrow[\lambda]_\theta^\mu
\end{equation*}
to assert that for every colouring of $[\kappa]^\mu$ in $\theta$ many
colours, it is possible to find an $X\subseteq\kappa$ with
$|X|=\lambda$ such that $[X]^\mu$ omits at least one of the
colours.\footnote{Here, $[X]^\mu$ stands for the collection of all subsets of $X$ of cardinality $\mu$,
where in the case that $X$ is a set of ordinals,
we identify these $\mu$-sized subsets with their increasing enumeration.
In particular, $[\kappa]^2=\{\pair{\alpha,\beta}\mid \alpha<\beta<\kappa\}$.} Note that the negation of the above, denoted
\begin{equation*}
\kappa\nrightarrow[\lambda]_\theta^\mu,
\end{equation*}
asserts the existence of a colouring $c:[\kappa]^\mu\rightarrow\theta$ such that for every $\lambda$-sized subset $X$ of $\kappa$,
we have $c``[X]^\mu=\theta$.
So Ramsey's theorem is just the assertion that
$\omega\rightarrow[\omega]_2^2$ holds, however, when $\omega$ is replaced
by larger cardinals, typically one gets negative relations,
sometimes quite strong (i.e., involving a large number of colours).
Negative square bracket partition relations have been studied extensively.
For instance, in the realm of $\omega_1$, we have a sequence of results
starting with Sierpi\'nski's uncountable poset \cite{MR1556708} that admits no uncountable chains nor uncountable antichains, thereby, witnessing $\omega_1\nrightarrow[\omega_1]_2^2$.
Later, Blass \cite{MR0297576} improved this to $\omega_1\nrightarrow[\omega_1]_3^2$,
and  Galvin and Shelah \cite{MR0329900} improved further to $\omega_1\nrightarrow[\omega_1]_4^2$.
Then, Todor\v{c}evi\'c \cite{MR908147} managed to gain control on the maximal number of colours, proving that $\omega_1\nrightarrow[\omega_1]_{\omega_1}^2$ holds.
Recently, even more complicated statements were proven by Moore \cite{MR2220104} and Peng and Wu \cite{pengwu}.

In analogy with the above, we now define a negative partition relation for commutative semigroups, involving finite sums ($\fs$),
bounded finite sums $(\fs_n$), and sumsets ($\sumsets$).
\begin{defn} For a commutative semigroup $G$, cardinals $\lambda,\theta$, and an integer $n$:
\begin{enumerate}
\item $G\nrightarrow[\lambda]^\fs_\theta$ asserts the existence of a colouring $c:G\rightarrow\theta$
such that for every $\lambda$-sized subset $X$ of $G$, we have $c[\fs(X)]=\theta$;
\item $G\nrightarrow[\lambda]^{\fs_n}_\theta$  asserts the existence of a colouring $c:G\rightarrow\theta$
such that for every $\lambda$-sized subset $X$ of $G$, we have $c[\fs_n(X)]=\theta$, where $\fs_n(X):=\{x_1+\cdots+x_n\mid x_1,\ldots,x_n\in X\text{ are all distinct}\}$;
\item $G\nrightarrow[\lambda]^\sumsets_\theta$ asserts the existence of a colouring $c:G\rightarrow\theta$
such that for every integer $m\ge2$, and every $\lambda$-sized subsets $X_1,\ldots,X_m$ of $G$, we have $c[X_1+\cdots+X_m]=\theta$,
where $X_1+\cdots+X_m:=\{x_1+\cdots+x_m\mid \forall i(x_i\in X_i)\}$.
\end{enumerate}
\end{defn}

Note that for all $\lambda\le\lambda'$, $G\s G'$, $\theta\le\theta'$,
and $\texttt{x}\in\{\fs_n,\fs,\sumsets\mid n<\omega\}$,
$G'\nrightarrow[\lambda]_{\theta'}^\texttt{x}$ implies  $G\nrightarrow[\lambda']_\theta^\texttt{x}$.
Also note that for every $n$, $G\nrightarrow[\lambda]^{\fs_n}_\theta$ implies $G\nrightarrow[\lambda]^{\fs}_\theta$.
Finally, note that for every infinite cardinal $\lambda$ and every integer $n\ge 2$, $G\nrightarrow[\lambda]^\sumsets_\theta$ implies $G\nrightarrow[\lambda]^{\fs_n}_\theta$
simply because any infinite set $X$ may be partitioned into $\biguplus_{i=1}^nX_i$ in such a way that $|X_i|=|X|$ for all $i$.
And indeed, $G\nrightarrow[\lambda]^\sumsets_\theta$ will be the strongest negative partition relation considered in this paper.

\medskip

Using the above notation, we can now succinctly state the relevant web of results (in chronological order):
\begin{itemize}
\item The generalized Hindman theorem (see \cite[Corollary 5.9]{MR2893605}) asserts that $G\rightarrow[\omega]_2^\fs$  holds (that is, $G\nrightarrow[\omega]_2^\fs$ fails)
for every infinite commutative cancellative semigroup $G$;\footnote{Consequently, $G\rightarrow[\omega]_n^\fs$ holds for every $n<\omega$.}
\item Milliken \cite{MR0505558} proved that $G\nrightarrow[\kappa^+]^{\fs_2}_{\kappa^+}$ holds whenever $|G|=\kappa^+=2^\kappa$ for some infinite cardinal $\kappa$;\footnote{However, it is consistent with $\zfc$ that $2^\kappa>\kappa^+$ for every infinite cardinal $\kappa$ \cite{MR1087344}.}
\item Hindman, Leader and Strauss \cite[Theorem~3.2]{hindman-leader-strauss} proved
that $\mathbb R\nrightarrow[\mathfrak c]_2^{\fs_n}$ holds for every integer $n\ge2$;
\item The first author~\cite{fb-on-hindman} proved that
$G\nrightarrow[\omega_1]_2^\fs$ holds for every uncountable commutative
cancellative semigroup $G$;
\item Komj\'ath~\cite{komjath}, and independently Soukup and Weiss~\cite{dani-bill}, proved that $\mathbb R\nrightarrow[\mathfrak\omega_1]_2^{\fs_n}$ holds for every integer $n\ge2$.
They also pointed out \cite[Corollary~2.3]{dani-bill} that by a theorem of Shelah \cite[Theorem~2.1]{MR955139},
it is consistent with $\zfc$ (modulo a large cardinal hypothesis) that $\mathbb R\nrightarrow[\omega_1]_3^{\fs_n}$ fails for every $n\ge2$.
\end{itemize}

\medskip

The main results of this paper read as follows:
\begin{theorema} $G\nrightarrow[\omega_1]_\omega^\fs$ holds for every uncountable commutative cancellative semigroup $G$.
\end{theorema}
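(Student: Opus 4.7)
The plan is to strengthen the two-color witness $G\nrightarrow[\omega_1]_2^{\fs}$ from \cite{fb-on-hindman} into an $\omega$-color one by realizing countably many such 2-colorings in a coordinated fashion.

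First, I would reduce to a convenient setting. Embedding $G$ into its Grothendieck group and invoking the monotonicity remark (a negative relation on a larger semigroup transfers to a smaller subsemigroup), I may assume $G$ is an uncountable Abelian group. Enumerate $G=\{g_\alpha\mid\alpha<\kappa\}$ and let $G_\alpha$ be the subgroup generated by $\{g_\beta\mid\beta<\alpha\}$, yielding a strictly $\s$-increasing continuous chain $\pair{G_\alpha\mid\alpha<\kappa}$ with $|G_\alpha|\le|\alpha|+\omega$. For $g\in G$, define $\rho(g):=\min\{\alpha<\kappa\mid g\in G_\alpha\}$; since each $G_\alpha$ has cardinality strictly less than $|G|$, every uncountable $X\s G$ has $\rho[X]$ uncountable, and after passing to an uncountable subset of $X$ I may assume that the elements of $X$ have pairwise distinct $\rho$-values. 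The basic feature of $\rho$ is that $\rho(g+g')\le\max\{\rho(g),\rho(g')\}$ always holds, because each $G_\alpha$ is a subgroup.

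Next, I would construct, for each $k<\omega$, a 2-coloring $d_k:G\to 2$ witnessing $G\nrightarrow[\omega_1]_2^{\fs}$, with the joint property that for every uncountable $X\s G$ and every finite binary string $\sigma\in 2^{<\omega}$, some $y\in\fs(X)$ satisfies $d_k(y)=\sigma(k)$ for all $k<|\sigma|$. Given such a family, it is routine to assemble the desired $\omega$-coloring, e.g.\ by setting $c(g):=\min\{k<\omega\mid d_k(g)=1\}$ when this minimum exists and $c(g):=0$ otherwise. To construct the $d_k$'s, I would parameterize the argument of \cite{fb-on-hindman} by countably many auxiliary choices (such as different orderings of generators of the countable layers $G_\alpha$, or different truncation functions associated to $\rho$) and verify that each parameterization yields a 2-coloring satisfying $G\nrightarrow[\omega_1]_2^{\fs}$ individually.

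The main obstacle is establishing the joint richness. Producing many 2-colorings, each witnessing $G\nrightarrow[\omega_1]_2^{\fs}$ on its own, does not automatically ensure that their combined output realizes all of $2^n$ on every $\fs(X)$; a priori, the colorings could be highly correlated. To overcome this, I would use an iterative extraction: given the first $n$ colorings and an uncountable $X\s G$, first extract via Hindman's theorem inside a countable subset of $X$ an infinite $Y\s X$ on which each of $d_0,\ldots,d_{n-1}$ is constant on $\fs(Y)$, and then vary $Y$ through uncountably many subfamilies of $X$ to realize both colors under $d_n$ via the 2-color result. Controlling the combinatorics of this extraction, while keeping the sum operation compatible with the layer structure given by $\rho$, is the technical heart of the proof, and may well require opening up the \cite{fb-on-hindman} construction rather than using it as a black box.
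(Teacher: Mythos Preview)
Your proposal has a genuine gap precisely at the step you flag as the ``technical heart.'' The iterative extraction cannot work as sketched: applying Hindman's theorem inside a countable $Z\subseteq X$ yields an infinite $Y$ on which $d_0,\ldots,d_{n-1}$ are each constant on $\fs(Y)$, but you have \emph{no control over which constants} you obtain, so you cannot prescribe the target pattern $\sigma$. Producing uncountably many such $Y_\alpha$ and pigeonholing only fixes some uncontrolled common pattern. Worse, to then invoke the $2$-color result for $d_n$ you need an \emph{uncountable} $W$ with $\fs(W)$ still carrying that pattern under $d_0,\ldots,d_{n-1}$; each $Y_\alpha$ is merely countable, and the union $\bigcup_\alpha Y_\alpha$ fails because sums mixing elements from distinct $Y_\alpha$'s escape all the $\fs(Y_\alpha)$'s. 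Nothing in your outline bridges this cardinality mismatch between Hindman's theorem (countable output) and $G\nrightarrow[\omega_1]_2^{\fs}$ (uncountable input), and parameterizing the $2$-color construction by ``different orderings'' does not obviously help.

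The paper's route is entirely different and involves no bootstrapping from the $2$-color case. It embeds $G$ into $\bigoplus_{\alpha<\kappa}G_\alpha$ with each $G_\alpha$ countable, so every $x\in G$ has a finite $\supp(x)\in[\kappa]^{<\omega}$, and defines a single map $d:G\to[\kappa]^{<\omega}$ by $d(x):=\sigma_{\supp(x)}``f(|\supp(x)|)$, where $f:\omega\to[\omega]^{<\omega}$ is chosen (via a Baire category argument) so that for every $m,n,\Omega$ there are infinitely many $k$ with $f(m+nk)=\Omega$. After passing to a condensation on which supports form a head-tail-tail $\Delta$-system of uniform shape, the size of $\supp(y_1+\cdots+y_k)$ is exactly $m+nk$; hence by choosing the \emph{number of summands} $k$ one steers $f(m+nk)$, and thus $d$, to any prescribed value. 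This yields $[A]^{<\omega}\subseteq d[\fs(X)]$ for some $|A|=|X|$, reducing the theorem to the trivial witness $z\mapsto|z|$ for $\kappa\nrightarrow[\omega_1]^{<\omega}_\omega$.
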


We shall show that the superscript $\fs$ in the preceding is optimal, by exhibiting an uncountable abelian group $G$ for which $G\nrightarrow[\omega_1]^{\fs_n}_\omega$ fails for all $n$.
We shall also address a stronger form of Theorem A, proving that things can go both ways:
\begin{theoremb1} It is consistent with $\zfc$ that $G\nrightarrow[\omega_1]_{\omega_1}^\fs$ holds for every uncountable commutative cancellative semigroup $G$.
\end{theoremb1}

\begin{theoremb2} Modulo a large cardinal hypothesis,
it is consistent with $\zfc$ that $\mathbb R\nrightarrow[\omega_1]^{\fs}_{\omega_1}$ fails.
\end{theoremb2}

It turns out that partition relations for $\fs_n$ sets can also go both ways.
To exemplify on the real line:

\begin{theoremc1}
If $\mathfrak c$ is a successor cardinal (e.g., assuming $\ch$), then $\mathbb R\nrightarrow[\mathfrak c]^{\sumsets}_{\omega_1}$ holds,
and hence, so does $\mathbb R\nrightarrow[\mathfrak c]^{\fs_n}_{\omega_1}$ for every integer $n\ge2$.
\end{theoremc1}

\begin{theoremc2} Modulo a large cardinal hypothesis,
it is consistent with $\zfc$ that $\mathbb R\nrightarrow[\mathfrak c]^{\fs_n}_{\omega_1}$ fails for every integer $n\ge2$.
\end{theoremc2}

The preceding raises the question of which negative partition relations for $\mathbb R$ are consequences of $\zfc$.
For this, we have the following simultaneous generalization of \cite[Theorem~1]{MR0329900} and \cite[Theorem~3.2]{hindman-leader-strauss}:

\begin{theoremc3} $\mathbb R\nrightarrow[\mathfrak c]_\omega^{\fs_n}$ holds for every integer $n\ge 2$.
\end{theoremc3}

Finally, we establish that the negative partition relation of the strongest form is quite a prevalent phenomena:

\begin{theoremd} Denote by $\circledast_\kappa$
the assertion that  $G\nrightarrow[\kappa]_{\kappa}^{\sumsets}$ holds for every commutative cancellative semigroup $G$ of cardinality $\kappa$.
Then:
\begin{itemize}
\item $\circledast_\kappa$ holds for $\kappa=\aleph_1,\aleph_2,\ldots,\aleph_n,\ldots$;
In fact, $\circledast_\kappa$ holds for every $\kappa$ which is a successor of regular cardinal;

\item $\circledast_{\kappa}$ holds whenever $\kappa=\lambda^+=2^\lambda$
or whenever $\kappa$ is a regular uncountable cardinal admitting a nonreflecting stationary set. In particular:
\item It is consistent with $\zfc$ that $\circledast_\kappa$
holds for every regular uncountable cardinal $\kappa$.
\end{itemize}
\end{theoremd}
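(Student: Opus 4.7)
My plan is to reduce $\circledast_\kappa$ to a strong ``rainbow'' coloring principle on $\kappa$, and then verify that principle in each stated case. First, any commutative cancellative semigroup $G$ of size $\kappa$ embeds into its Grothendieck group $\hat G$, an abelian group of the same cardinality, and any coloring witnessing $\circledast_\kappa$ for $\hat G$ transfers back to $G$ via a bijection $\hat G\leftrightarrow G$, so I may assume $G$ is abelian. Moreover, for any $n\ge 1$, pre-fixing $x_1\in X_1,\ldots,x_n\in X_n$ reduces the sumset $X_0+\cdots+X_n$ to the translate $X_0+(x_1+\cdots+x_n)$ of $X_0$, so $\circledast_\kappa$ is essentially equivalent to its $n=1$ case: for every $X,Y\in[G]^\kappa$ and every $\gamma\in G$, there are $x\in X,\ y\in Y$ with $c(x+y)=\gamma$.

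The key combinatorial ingredient is a coloring $d\colon[\kappa]^2\to\kappa$ witnessing Shelah's principle $\pr_1(\kappa,\kappa,\kappa,\omega)$: for every family of $\kappa$ pairwise disjoint finite subsets of $\kappa$ and every $\gamma<\kappa$, there exist two such sets $a,b$ with $d(\alpha,\beta)=\gamma$ for all $\alpha\in a$ and $\beta\in b$ with $\alpha<\beta$. Given such a $d$ and an enumeration $G=\{g_\alpha:\alpha<\kappa\}$, I would define $c(g)=g_{d(\sigma(g))}$, where $\sigma\colon G\to[\kappa]^2$ is a canonical ``signature'' map extracted from the enumeration (for example, through a walk on ordinals or a ladder construction). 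The central claim is then that for any $X,Y\in[G]^\kappa$, the set $\{\sigma(x+y):x\in X,\ y\in Y\}$ contains, after pigeonhole, $\kappa$ pairwise disjoint finite patches; applying $\pr_1$ to this family produces some $x+y$ of the prescribed color.

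I would next verify the $\pr_1$-style hypothesis case by case. For $\kappa=\mu^+$ with $\mu$ regular, Todor\v{c}evi\'c's minimal walks on $\kappa$ yield $\pr_1(\kappa,\kappa,\kappa,\cf(\mu))$, which is strengthened to width $\omega$ by a standard pairing argument, handling the first bullet. For $\kappa=\lambda^+=2^\lambda$, Shelah's stepping-up (from a scale on $\lambda^\lambda$ or a $\square_\lambda$-like sequence) provides the desired coloring. For a regular uncountable $\kappa$ carrying a nonreflecting stationary set $S$, a ladder-and-oscillation construction based on $S$ yields $\pr_1(\kappa,\kappa,\kappa,\omega)$. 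For the final bullet, I would work in $L$ (or any fine-structural model with no weakly compact cardinals), where every regular uncountable cardinal admits a nonreflecting stationary set, so the preceding constructions cover every such $\kappa$ simultaneously.

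The hard part will be the key lemma. An element $g\in G$ admits many distinct decompositions $g=x+y$, so $c(g)$ must be defined from $g$ alone yet interact predictably with all of them. The construction of the signature map $\sigma$ must therefore be ``local'' enough to be computable from $g$ alone, yet ``spread out'' enough that for arbitrary $\kappa$-sized rectangles $X\times Y$ the sum signatures $\sigma(x+y)$ fall into the pairwise disjoint family required to invoke $\pr_1$. Navigating this balance, and identifying the precise variant of $\pr_1$ that tracks cleanly through the group operation, is the crux of the argument; the rest is classical.
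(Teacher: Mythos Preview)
Your overall architecture is right --- reduce to abelian groups, reduce sumsets to the two-set case (this is the paper's Lemma~\ref{reductionsumsets}), and feed in $\pr_1(\kappa,\kappa,\kappa,\omega)$ --- but the ``signature map'' is exactly where the argument lives, and your proposal leaves it undefined. A map $\sigma$ cooked up from an arbitrary enumeration via walks or ladders has no reason to satisfy anything about $\sigma(x+y)$; the group operation and the well-order are unrelated. The paper's solution is structural: embed $G$ into a direct sum $\bigoplus_{\alpha<\kappa}G_\alpha$ of \emph{countable} groups (Lemma~\ref{embedding}) and take $\sigma(x)=\supp(x)\in[\kappa]^{<\omega}$. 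This buys the crucial sandwich $\supp(x)\symdiff\supp(y)\subseteq\supp(x+y)\subseteq\supp(x)\cup\supp(y)$ (Proposition~\ref{supportofsums}), which is the only link between the algebra and the combinatorics.

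Even with that in hand, $\pr_1$ alone is not enough: $\supp(x+y)$ is not \emph{determined} by $\supp(x)$ and $\supp(y)$, so you must colour every $z$ in the sandwich the same way. The paper isolates this as the principle $\axiomi(\kappa,\kappa,\omega)$ and proves (Lemma~\ref{pr1lemma}) that $\pr_1(\kappa,\kappa,\kappa,\omega)$ implies it, by splitting the $\pr_1$-colouring into a ``marker'' coordinate (which picks out the pair in $z$ achieving the maximum) and a ``payload'' coordinate (which carries the colour). This step is not bookkeeping; it is the second real idea. Finally, your claim that walks give $\pr_1(\omega_1,\omega_1,\omega_1,\omega)$ is false: Galvin showed this can consistently fail, so Fact~\ref{factpr1} does not cover $\kappa=\aleph_1$. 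The paper handles $\aleph_1$ separately (Theorem~\ref{lowertrace}) using Moore's oscillation map, and you would need something comparable.
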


\subsection*{Organization of this paper}
Theorems A, B1 and B2 are proved in Section~\ref{omegacolours}.
In Section~\ref{infcombinatorics}, we establish some new results on the partition calculus of uncountable cardinals.
In Section~\ref{negativehindman}, the machinery of Section~\ref{infcombinatorics} is invoked in proving, among other things, Theorem D.
Section~\ref{therealline} focuses on  the real line, and Theorems C1, C2, C3 are derived there as corollaries.
\section{Colourings for finite sums}\label{omegacolours}

We open this section by stating a structural result that will
allow us to pass from elements of commutative cancellative semigroups of
cardinality $\kappa$ to finite subsets of $\kappa$, so that we
are able to apply some machinery from partition relations on
cardinals to our semigroups.\footnote{The noncommutative case will be handled in a forthcoming paper.}
For this, we will need to lay down some terminology.

\begin{definition}
Given a sequence of groups $\langle G_\alpha\mid\alpha<\kappa\rangle$,
define its \emph{direct sum} to be the group
\begin{equation*}
\bigoplus_{\alpha<\kappa}G_\alpha:=\left\{x\in\prod_{\alpha<\kappa} G_\alpha \Mid x(\alpha)\text{ equals the identity for all but finitely many }\alpha\right\}.
\end{equation*}
\end{definition}

Recall that a \emph{divisible group} is an abelian group $G$
such that for every $x\in G$ and every $n\in\mathbb N$, there exist
some $z\in G$ such that $nz=x$.

\begin{lemma}\label{embedding}
Suppose that $G$ is an infinite commutative cancellative semigroup.
Denote $\kappa:=|G|$. Then there exists a sequence of countable divisible groups, $\langle G_\alpha\mid \alpha<\kappa\rangle$,
such that $G$ embeds in $\bigoplus_{\alpha<\kappa}G_\alpha$.
\end{lemma}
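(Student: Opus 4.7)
The plan is to build the embedding in three standard algebraic stages, then adjust the number of summands to be exactly $\kappa$. The strategy exploits the classical structure theorem for divisible abelian groups.

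First, I would enlarge $G$ into an abelian group. Since $G$ is commutative and cancellative, its Grothendieck construction $\Gamma(G):=(G\times G)/{\sim}$ (with $(a,b)\sim(c,d)$ iff $a+d=b+c$; adjoining a formal identity first if $G$ has none) is an abelian group into which $G$ embeds via $x\mapsto [x+e,e]$. Since $|G|=\kappa$ is infinite, $|\Gamma(G)|\leq|G\times G|=\kappa$, and because $G$ embeds into $\Gamma(G)$, we in fact have $|\Gamma(G)|=\kappa$.

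Second, I would pass from $\Gamma(G)$ to a divisible group $D$ containing it. Every abelian group embeds into a divisible abelian group, for instance the \emph{divisible hull} obtained by first tensoring with $\mathbb Q$ (killing the torsion-free quotient's obstruction to division) and then adjoining solutions to $nz=x$ for each torsion element. The divisible hull of an abelian group of infinite cardinality $\kappa$ again has cardinality $\kappa$. So we now have an injection $G\hookrightarrow D$ with $D$ a divisible abelian group of cardinality $\kappa$.

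Third, I would invoke the structure theorem for divisible abelian groups, which asserts that any such group decomposes as a direct sum of copies of $\mathbb Q$ and of Pr\"ufer groups $\mathbb Z(p^\infty)$ (for various primes $p$); each of these summands is countable and divisible. Thus $D\cong\bigoplus_{i\in I} H_i$ with every $H_i$ a countable divisible group, and since $|D|=\kappa$ with each $|H_i|\leq\aleph_0$, we have $|I|\leq\kappa$. To reach exactly $\kappa$ summands (as demanded by the lemma), I would pad: letting $\langle H_i\mid i\in I\rangle$ be reindexed as $\langle G_\alpha\mid\alpha<\lambda\rangle$ for $\lambda=|I|$, set $G_\alpha:=\mathbb Q$ for all $\lambda\leq\alpha<\kappa$. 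Then each $G_\alpha$ is countable and divisible, and the obvious inclusion $\bigoplus_{\alpha<\lambda}G_\alpha\hookrightarrow\bigoplus_{\alpha<\kappa}G_\alpha$ composed with the preceding embeddings gives $G\hookrightarrow\bigoplus_{\alpha<\kappa}G_\alpha$, as desired.

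The main potential obstacle is really just bookkeeping: confirming that each of the three steps preserves cardinality (especially that the divisible hull of a $\kappa$-sized group is $\kappa$-sized, which follows by a routine induction on the steps of its construction), and that one may safely adjoin identity to a semigroup without an identity before running the Grothendieck construction without altering cardinality. Conceptually the proof is straightforward, since the entire content is the combination of well-known theorems in abelian group theory (Grothendieck group, existence of divisible hulls, structure theorem for divisible abelian groups) with a harmless padding at the end.
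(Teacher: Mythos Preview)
Your proof is correct and follows essentially the same three-step route as the paper: embed the cancellative commutative semigroup into an abelian group of the same cardinality, embed that into a divisible group, and then apply the structure theorem for divisible abelian groups. The only cosmetic difference is in the final bookkeeping step---the paper discards spurious summands (those $\alpha$ with $x(\alpha)=0$ for every $x\in G$) to cut the index set down to exactly $\kappa$, whereas you pad with extra copies of $\mathbb Q$ to bring it up to $\kappa$; both adjustments are harmless and yield the same conclusion.
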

\begin{proof}
It is well-known that every commutative cancellative semigroup
$G$ can be embedded in an abelian group $G'$, which can furthermore be assumed to
have the same cardinality as $G$.\footnote{This is
done by means of the same process which embeds the additive
group $\mathbb N$ into $\mathbb Z$, or the multiplicative group
$\mathbb Z\setminus\{0\}$ into $\mathbb Q\setminus\{0\}$. This
process yields $G'$ as a quotient of the semigroup $G\times G$,
therefore $|G'|=|G|$.}
Next, since $G'$ is an abelian group, it can be embedded in a divisible
group $G''$, by~\cite[Theorem 24.1]{MR0255673}.
Finally, by~\cite[Theorem 23.1]{MR0255673}, each divisible group is isomorphic to a
direct sum of some copies of $\mathbb Q$ with some
quasicyclic groups $\mathbb Z(p^\infty)$.\footnote{Recall that for a prime number $p$, the \emph{$p$-quasicyclic group}
(also known as the \emph{Pr\"ufer $p$-group})
is a countable divisible subgroup of $\mathbb R/\mathbb Z$, defined by
$\mathbb Z(p^\infty):=\left\{\frac{a}{p^n}+\mathbb Z\Mid a\in\mathbb Z\ \&\ n<\omega\right\}.$}
Thus, we may assume that
$G''=\bigoplus_{\alpha<\lambda}G_\alpha$, where
each $G_\alpha$ is equal to either $\mathbb Q$, or
$\mathbb Z(p^\infty)$. By removing spurious summands
(i.e., the $\alpha<\lambda$ such that $x(\alpha)=0$
for every $x\in G$) we can assume that the number of
summands is $|G|$, in other words, that $\lambda=\kappa$.
Thus, $G$ embeds into $\bigoplus_{\alpha<\kappa}G_\alpha$,
and each $G_\alpha$ (being either $\mathbb Q$ or
$\mathbb Z(p^\infty)$ for some $p$) is a countable divisible group.
\end{proof}

In what follows, given an infinite commutative cancellative
semigroup $G$ of infinite cardinality $\kappa$, we will
always implicitly fix an embedding of
$G$ into $\bigoplus_{\alpha<\kappa}G_\alpha$, where
each $G_\alpha$ is countable, as per Lemma~\ref{embedding}.
This allows us to define the \emph{support} of an element $x\in G$ to be
the finite set
\begin{equation*}
\supp(x):=\{\alpha<\kappa\mid x(\alpha)\neq 0\}.
\end{equation*}

\begin{definition} A family of sets $\mathcal X$ is said to be a \emph{$\Delta$-system with root $r$}
if for every two distinct $x,x'\in\mathcal X$, we have $x\cap x'=r$.

A $\Delta$-system $\mathcal X$ is said to be of the \emph{head-tail-tail} form if:
\begin{itemize}
\item $\sup(r)<\min(x\setminus r)$ for all $x\in\mathcal X$;
\item for any two distinct $x,x'\in\mathcal X$, either $\sup(x)<\min(x'\setminus r)$ or $\sup(x')<\min(x\setminus r)$.
\end{itemize}
\end{definition}

A standard fact from Set Theory states that for every regular uncountable cardinal $\kappa$ and every family $\mathcal X$
consisting of $\kappa$ many finite sets, there exists $\mathcal X'\s\mathcal X$ with $|\mathcal X'|=\kappa$
such that $\mathcal X'$ forms a $\Delta$-system. In the special case that $\mathcal X\s[\kappa]^{<\omega}$,
a $\Delta$-subsystem $\mathcal X'\s\mathcal X$ may be found which is moreover of the head-tail-tail form.

\begin{prop}\label{supportofsums}
Suppose that $x_1,\ldots,x_n$ are elements
of a direct sum $\bigoplus_{\alpha<\kappa}G_\alpha$, and there
exist a fixed $r\in[\kappa]^{<\omega}$
and pairwise disjoint $s_1,\ldots,s_n\in[\kappa]^{­<\omega}$
satisfying $\supp(x_i)=r\uplus s_i$ for all $1\leq i\leq n$
(that is, the set of corresponding supports forms a $\Delta$-system
with root $r$). Then
\begin{equation*}
s_1\cup\cdots\cup s_n\subseteq\supp(x_1+\cdots+x_n)\subseteq r\cup s_1\cup\cdots\cup s_n.
\end{equation*}
\end{prop}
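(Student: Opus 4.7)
The proof is a direct coordinate-by-coordinate verification, exploiting the fact that addition in $\bigoplus_{\alpha<\kappa}G_\alpha$ is pointwise, together with the combinatorial hypothesis that the supports form a $\Delta$-system with root $r$.

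For the upper inclusion $\supp(x_1+\cdots+x_n)\s r\cup s_1\cup\cdots\cup s_n$, I would take any $\alpha$ outside $r\cup s_1\cup\cdots\cup s_n$. By hypothesis $\supp(x_i)=r\uplus s_i$ for each $i$, so $\alpha\notin\supp(x_i)$, i.e.\ $x_i(\alpha)$ is the identity of $G_\alpha$, for every $i$. Since addition in the direct sum is computed coordinatewise, $(x_1+\cdots+x_n)(\alpha)$ is the identity of $G_\alpha$, hence $\alpha\notin\supp(x_1+\cdots+x_n)$. Contrapositively, this gives the desired containment.

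For the lower inclusion $s_1\cup\cdots\cup s_n\s\supp(x_1+\cdots+x_n)$, fix $\alpha\in s_i$ for some $1\le i\le n$. Because the $s_j$'s are pairwise disjoint and $\alpha\notin r$ (as $\supp(x_i)=r\uplus s_i$ is a disjoint union), we have $\alpha\notin\supp(x_j)=r\uplus s_j$ for every $j\ne i$; thus $x_j(\alpha)$ is the identity of $G_\alpha$ for $j\ne i$, while $x_i(\alpha)$ is nontrivial. Once again using coordinatewise addition, $(x_1+\cdots+x_n)(\alpha)=x_i(\alpha)\ne 0$, and so $\alpha\in\supp(x_1+\cdots+x_n)$.

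There is really no obstacle here: the only mild subtlety is noting that behaviour on the root $r$ is unrestricted (the coordinate sum $\sum_{i=1}^n x_i(\alpha)$ for $\alpha\in r$ may or may not vanish, depending on possible cancellations in $G_\alpha$), which is precisely why the containment on the right is not in general an equality and $r$ appears only on the upper bound side of the displayed inclusion.
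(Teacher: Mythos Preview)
Your proof is correct and follows essentially the same coordinate-by-coordinate verification as the paper's own proof: both argue the upper inclusion by showing that outside $r\cup s_1\cup\cdots\cup s_n$ every $x_i(\alpha)$ vanishes, and the lower inclusion by observing that for $\alpha\in s_i$ exactly one summand contributes nontrivially. Your closing remark about possible cancellation on the root $r$ is a helpful gloss that the paper makes explicitly just after the proof.
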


\begin{proof}
Let $\alpha<\kappa$ be arbitrary. If $\alpha\notin r\cup s_1\cup\cdots\cup s_n$,
then $\alpha\notin\supp(x_i)$ for any $i$, thus
\begin{equation*}
(x_1+\cdots+x_n)(\alpha)=x_1(\alpha)+\cdots+x_n(\alpha)=0+\cdots+0=0,
\end{equation*}
therefore $\supp(x_1+\cdots+x_n)\subseteq r\cup s_1\cup\cdots\cup s_n$.
Now, if $\alpha\in s_1\cup\cdots\cup s_n$, then there exists
a unique $1\leq i\leq n$ with $\alpha\in s_i$. This
means that $\alpha\in\supp(x_i)$ but $\alpha\notin\supp(x_j)$
for $j\neq i$, in other words, $x_i(\alpha)\neq 0$ but
$x_j(\alpha)=0$ for $j\neq i$. Therefore
\begin{eqnarray*}
(x_1+\cdots+x_n)(\alpha) & = & x_1(\alpha)+\cdots+x_{i-1}(\alpha)+x_i(\alpha)+x_{i+1}(\alpha)+x_n(\alpha) \\
 & = & 0+\cdots+0+x_i(\alpha)+0+\cdots+0=x_i(\alpha)\neq 0,
\end{eqnarray*}
thus $s_1\cup\cdots\cup s_n\subseteq\supp(x_1+\cdots+x_n)$.
\end{proof}

The main offshot of Proposition~\ref{supportofsums} is that, whenever
we want to determine $\supp(x_1+\cdots+x_n)$ for $x_1,\ldots,x_n$
satisfying the corresponding hypothesis, the only coordinates $\alpha$
that require careful inspection are the $\alpha\in r$,
where one must determine whether
$x_1(\alpha)+\ldots+x_n(\alpha)$ is equal to $0$. In the
particular case where $n=2$, we obtain the fact, which will be
useful later, that
\begin{equation*}
\supp(x_1)\symdiff\supp(x_2)\subseteq\supp(x_1+x_2)\subseteq\supp(x_1)\cup\supp(x_2).
\end{equation*}

In~\cite[Theorem~5]{fb-on-hindman}, it was established that for every
uncountable commutative cancellative semigroup $G$,
$G\nrightarrow[\omega_1]_2^\fs$. This was done by colouring an
element $x\in G$ according to the parity of
$\lfloor\log_2|\supp(x)|\rfloor$.\footnote{The idea of colouring a finite
set $F$ according to the parity of $\lfloor\log_2|F|\rfloor$ goes
back to~\cite{MR906807}.} Similarly, for every uncountable
commutative cancellative semigroup $G$ and every positive integer $m$,
it is possible to define a colouring $c:G\longrightarrow m$ by declaring
$c(x)$ to be the class of $\lfloor\log_2|\supp(x)|\rfloor$ modulo $m$.
Arguing in the same way as in the proof
of~\cite[Theorem 5]{fb-on-hindman}, one can show that
every uncountable $X\subseteq G$  satisfies $c[\fs(X)]=m$; so that
every uncountable commutative cancellative semigroup $G$ satisfies
$G\nrightarrow[\omega_1]_m^\fs$
for every $m<\omega$. In this section, among other things, we will show that
every uncountable commutative cancellative semigroup $G$ moreover satisfies
$G\nrightarrow[\omega_1]_\omega^\fs$.

In order to simplify certain steps of the proof of the main theorem,
let us introduce the following notion.

\begin{definition}
Let $G$ be a commutative semigroup, and $X\subseteq G$.

We say that $Y$ is a \emph{condensation} of $X$
if there exists a family $\mathcal A\s[X]^{<\omega}$ consisting of pairwise disjoint sets, such that
\begin{equation*}
Y=\left\{\sum_{x\in a}x\Mid a\in\mathcal A\right\}.
\end{equation*}
\end{definition}

The importance of this notion stems from the evident fact that if $Y$ is a condensation of $X$,
then $\fs(Y)\subseteq\fs(X)$. Thus the notion of ``passing to a condensation"
constitutes another, purely algebraic, form of thinning-out a family of
elements of $G$ which can be useful if we are dealing with finite sums without
restrictions on the number of summands.

The following argument was first used
in~\cite{fb-on-hindman} (in the proof of the Claim within the proof of
Theorem~5 therein), and is encapsulated here as a lemma both for
the convenience of the reader, and for future reference.

\begin{lemma}\label{gotocondensation}
Suppose that $G$ is a commutative cancellative
semigroup, and  $X\subseteq G$ is a subset of
regular uncountable cardinality. Fix an embedding of
$G$ into a direct sum  $\bigoplus_{\alpha<\kappa}G_\alpha$, as per Lemma~\ref{embedding}.

Then there exists a condensation $Y$ of $X$ such that:
\begin{itemize}
\item $\supp[Y]:=\{\supp(y)\mid y\in Y\}$ forms a $\Delta$-system of cardinality $|X|$;
\item for all $y\in Y$ and all $\alpha$ in the root of  $\supp[Y]$,  $y(\alpha)$ has an infinite order in $G_\alpha$;
\item for every positive integer $n$ and every $y_1,\ldots,y_n\in Y$,
\begin{equation*}
\supp(y_1+\cdots+y_n)=\supp(y_1)\cup\cdots\cup\supp(y_n).
\end{equation*}
\end{itemize}
\end{lemma}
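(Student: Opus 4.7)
The plan is to produce $Y$ in three successive operations: first apply the head-tail-tail $\Delta$-system lemma to $\{\supp(x)\mid x\in X\}$; then thin pointwise so that every root-coordinate takes a constant value; and finally pass to a condensation that groups elements into blocks large enough to annihilate the finite-order root-coordinates. After these operations, only infinite-order coordinates will survive in the new root, which is precisely what clauses (ii) and (iii) require.

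In detail, first extract $X_1\subseteq X$ of cardinality $|X|$ such that $\{\supp(x)\mid x\in X_1\}$ is a head-tail-tail $\Delta$-system with root $r$. Since each $G_\alpha$ is countable and $|X|$ is regular uncountable, by iterating pigeonhole over the finitely many coordinates $\alpha\in r$ one finds $X_2\subseteq X_1$ of cardinality $|X|$ and nonzero elements $g_\alpha\in G_\alpha$ ($\alpha\in r$) such that $x(\alpha)=g_\alpha$ for every $x\in X_2$ and $\alpha\in r$. Split $r=r_{\mathrm{inf}}\uplus r_{\mathrm{fin}}$ according to whether $g_\alpha$ has infinite or finite order in $G_\alpha$, and let $M$ be the least common multiple of $\{\ord(g_\alpha)\mid\alpha\in r_{\mathrm{fin}}\}$ (with $M:=1$ if $r_{\mathrm{fin}}=\emptyset$). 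Enumerate $X_2=\{x_\xi\mid\xi<|X|\}$ compatibly with the head-tail-tail order on supports, partition it into consecutive blocks of size $M$ to form a pairwise disjoint family $\mathcal{A}\subseteq[X]^{<\omega}$, and set $Y:=\{\sum_{x\in a}x\mid a\in\mathcal{A}\}$; by construction $Y$ is a condensation of $X$.

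To verify the three clauses, fix $a\in\mathcal{A}$ and set $y:=\sum_{x\in a}x$. Since the supports of elements of $a$ form a $\Delta$-system with root $r$, Proposition~\ref{supportofsums} gives $\bigcup_{x\in a}(\supp(x)\setminus r)\subseteq\supp(y)\subseteq r\cup\bigcup_{x\in a}(\supp(x)\setminus r)$, and the remaining behaviour at $\alpha\in r$ is computed directly: $y(\alpha)=Mg_\alpha$, which vanishes iff $\alpha\in r_{\mathrm{fin}}$. Thus $\supp(y)=r_{\mathrm{inf}}\cup\bigcup_{x\in a}(\supp(x)\setminus r)$, which immediately yields clause (i) (a head-tail-tail $\Delta$-system with root $r_{\mathrm{inf}}$, still of cardinality $|X|$ since $|X|$ is regular and $M$ is finite and supports are distinct) and clause (ii) (since $Mg_\alpha$ has infinite order whenever $g_\alpha$ does). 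For clause (iii), given distinct $y_1,\ldots,y_n\in Y$ arising from distinct blocks, Proposition~\ref{supportofsums} applied to the $y_i$'s yields $\bigcup_i(\supp(y_i)\setminus r_{\mathrm{inf}})\subseteq\supp(y_1+\cdots+y_n)\subseteq r_{\mathrm{inf}}\cup\bigcup_i(\supp(y_i)\setminus r_{\mathrm{inf}})$, and the root-coordinate is $(y_1+\cdots+y_n)(\alpha)=nMg_\alpha\neq 0$ for every $\alpha\in r_{\mathrm{inf}}$, forcing the equality $\supp(y_1+\cdots+y_n)=\bigcup_i\supp(y_i)$.

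The main obstacle is conceptual rather than technical: pigeonhole alone leaves root-coordinates whose constant value $g_\alpha$ has finite order, and such coordinates would cause partial cancellation in $y_1+\cdots+y_n$ and thus violate clause (iii). The condensation step, with block size chosen to be exactly $M=\mathrm{lcm}\{\ord(g_\alpha)\mid\alpha\in r_{\mathrm{fin}}\}$, is precisely the device that surgically removes these troublesome coordinates from the root while leaving all other structural features of $X_2$ intact.
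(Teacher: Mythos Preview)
Your proof is correct and follows essentially the same approach as the paper: pass to a $\Delta$-system, stabilize the root coordinates via pigeonhole, and condense into blocks of size divisible by every finite order appearing at the root so that the finite-order coordinates are annihilated and only infinite-order ones survive. The only differences are cosmetic---you use a head-tail-tail $\Delta$-system and consecutive blocks (neither is needed for the lemma as stated, though it does no harm), and you take $M$ to be the lcm rather than the product of the finite orders (either works).
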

\begin{proof} Note that as
$x\mapsto\{\pair{\alpha,x(\alpha)}\mid\alpha\in\supp(x)\}$
is an injection, and each $G_\alpha$ is countable, we have
$|\supp[Y]|=|Y|$ for every uncountable $Y\s X$.
In particular, $|\supp[X]|$ is regular and uncountable,
and by passing to an equipotent subset of $X$, we may
assume that $\supp[X]$ forms a $\Delta$-system.

Let $r$ denote the root of this system. If $r$ is empty, then we are done,
as a consequence of Proposition~\ref{supportofsums}.
Thus, suppose that $r$ is nonempty.
Now, a finite number of applications
of the pigeonhole principle allows us to thin out $X$, without changing
its cardinality, in such a way that
for every $\alpha\in r$, there exists a fixed $g_\alpha\in G_\alpha$
such that $x(\alpha)=g_\alpha$ for all $x\in X$.
Let $r_\infty:=\{\alpha\in r\mid g_\alpha\text{ is of infinite order}\}$,
and  $M:=\{ \ord(g_\alpha)\mid\alpha\in r\setminus r_\infty\}$.
Since $r$ is finite, so is $M$, thus we can take $m:=\prod_{n\in M}n$ (in
the understanding that the empty product equals $1$), thereby ensuring
that $m g_\alpha=0$ for all
$\alpha\in r\setminus r_\infty$. Now we obtain a condensation $Y$ of $X$,
by fixing some family $\mathcal A\s[X]^m$ consisting of exactly $|X|$
many pairwise disjoint $m$-sized sets, and then letting
\begin{equation*}
Y:=\left\{\sum_{x\in a}x\Mid a\in\mathcal A\right\}.
\end{equation*}

\begin{claim}$\supp[Y]$ forms a $\Delta$-system of cardinality $|X|$
with root $r_\infty$.
\end{claim}
\begin{proof} For each $x\in X$, denote
$s_x:=\supp(x)\setminus r$, so that $\{ s_x\mid x\in X\}$ is a family of pairwise disjoint sets. Now, for any $y\in Y$, there is
an $a\in\mathcal A$ (with $|a|=m$) such that
$y=\sum_{x\in a}x$, and we claim that
\begin{equation*}
\supp(y)=r_\infty\uplus\left(\bigcup_{x\in a}s_x\right).
\end{equation*}

This will show that $\supp[Y]$ forms a $\Delta$-system with root $r_\infty$.
This will also show that the map $a\mapsto \supp(\sum_{x\in a}x)$ is an injection from $\mathcal A$ to $\supp[Y]$,
so that $|\supp[Y]|=|\mathcal A|=|X|$.

Here goes.
By Proposition~\ref{supportofsums}, we know that
$\bigcup_{x\in a}s_x\subseteq\supp(y)\subseteq r\cup\left(\bigcup_{x\in a}s_x\right)$,
thus it suffices to prove that for every $\alpha\in r$:
$\alpha\in\supp(y)\iff\alpha\in r_\infty$.

$\br$ For each $\alpha\in r_\infty$ and $x\in a$, we have $x(\alpha)=g_\alpha$
which is an element of infinite order. Hence
\begin{equation*}
y(\alpha)=\sum_{x\in a}x(\alpha)=m g_\alpha\neq 0.
\end{equation*}

$\br$ For each $\alpha\in r\setminus r_\infty$ and $x\in a$, we have $x(\alpha)=g_\alpha$
which is an element whose order is a divisor of $m$. Hence
\begin{equation*}
y(\alpha)=\sum_{x\in a}x(\alpha)=m g_\alpha=0.
\end{equation*}

Altogether, we have
$\supp(y)=r_\infty\uplus\left(\bigcup_{x\in a}s_x\right)$.

Finally, given any two distinct $y,y'\in Y$,
pick $a,a'\in\mathcal A$ such that $y=\sum_{x\in a}x$ and $y'=\sum_{x\in a'}x$.
Then by $a\cap a'=\varnothing$ and since the $s_x$ are pairwise disjoint, we have that
\begin{equation*}
\supp(y)\cap\supp(z)=\left(r_\infty\cup\bigcup_{x\in a}s_x\right)\cap\left(r_\infty\cup\bigcup_{x\in b}s_x\right)=r_\infty,
\end{equation*}
which shows that $\supp[Y]$ forms a $\Delta$-system, with root
$r_\infty$.
\end{proof}

Now let $n\in\mathbb N$, and let $y_1,\ldots,y_n\in Y$ be $n$ many
distinct elements. It remains to prove that
$\supp(y_1+\cdots+y_n)=\supp(y_1)\cup\cdots\cup\supp(y_n)$.
Denote  $s_i:=\supp(y_i)\setminus r_\infty$.
By Proposition~\ref{supportofsums},
\begin{equation*}
s_1\cup\cdots\cup s_n\subseteq\supp(y_1+\cdots+y_n)\subseteq r_\infty\cup s_1\cup\cdots\cup s_n=\supp(y_1)\cup\cdots\cup\supp(y_n),
\end{equation*}
and therefore it suffices to prove that $r_\infty\subseteq\supp(y_1+\cdots+y_n)$.
So let $\alpha\in r_\infty$ be arbitrary. We have already noticed
that $y_i(\alpha)=m g_\alpha$ for each $1\leq i\leq n$, where $g_\alpha$
is an element of infinite order. Consequently
\begin{equation*}
(y_1+\cdots+y_n)(\alpha)=y_1(\alpha)+\cdots+y_n(\alpha)=mg_\alpha+\cdots+mg_\alpha=(nm)g_\alpha\neq 0,
\end{equation*}
which shows that $\alpha\in\supp(y_1+\cdots+y_n)$, and we are done.
\end{proof}

We now arrive at the main technical result of this section.

\begin{theorem}\label{multicube}
Suppose that $G$ is a commutative cancellative semigroup of uncountable cardinality $\kappa$.

Then there exists a transformation $d:G\rightarrow[\kappa]^{<\omega}$
with the property that for every uncountable $X\s G$,
there exists $A\s\kappa$ such that $|A|=|X|$ and $d`` \fs(X)\supseteq[A]^{<\omega}$.
\end{theorem}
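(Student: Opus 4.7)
The plan is to leverage the embedding of $G$ into $\bigoplus_{\alpha<\kappa}G_\alpha$ provided by Lemma~\ref{embedding}, and to define $d$ as a function of the support $\supp(x)$. The naive candidate $d(x):=\supp(x)$ does not quite suffice: by Lemma~\ref{gotocondensation}, any uncountable $X$ admits a condensation $Y$ whose finite sums have supports of the form $r\cup t_{y_1}\cup\cdots\cup t_{y_n}$, where $r$ is a fixed common root and the $t_y$ are pairwise disjoint tails. Such a family cannot cover $[A]^{<\omega}$ for any uncountable $A$, since the root and the tails always appear in an all-or-nothing fashion.

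Instead, I would take $d(x):=f(\supp(x))$ for a fixed combinatorial map $f\colon[\kappa]^{<\omega}\to[\kappa]^{<\omega}$ whose purpose is to strip off the root portion of the support and to contract each tail-cluster to a single canonical ordinal. A natural implementation is via \emph{big jumps}: $f(S)$ is the set of those $\alpha\in S$ for which $\alpha$ is much larger than $\sup(S\cap\alpha)$ (say, $\alpha>2\cdot\sup(S\cap\alpha)$ in ordinal terms), with $\min S$ deleted. For an uncountable $X\s G$, I would pass to a condensation $Y$ via Lemma~\ref{gotocondensation}, and then thin further (using the head-tail-tail $\Delta$-system refinement together with recursive cofinal choices) so that (i) the tails $t_y$ are pairwise disjoint of constant finite size, (ii) consecutive tails are widely separated in $\kappa$, and (iii) the root $r$ is ``compact'' relative to the tail separations, so that within $\supp(y_1+\cdots+y_n)=r\cup\bigcup t_{y_i}$ the only big jumps registered by $f$ beyond $\min r$ are precisely the ordinals $\min t_{y_i}$. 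Setting $a_y:=\min t_y$ and $A:=\{a_y:y\in Y\}$, one has $|A|=|Y|=|X|$, and for any finite $F=\{a_{y_1},\ldots,a_{y_n}\}\s A$ the sum $y_1+\cdots+y_n\in\fs(X)$ satisfies $d(y_1+\cdots+y_n)=F$, giving $[A]^{<\omega}\s d``\fs(X)$.

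The main obstacle is the joint design of $f$ and the thinning procedure. The root $r$ is forced upon us by the condensation and may possess internal gaps that an overly naive $f$ would misidentify as big jumps, so the threshold used to define ``big jump'' and the separation guarantees in the thinning must be calibrated in tandem; one natural device is to force all tails to lie inside a fixed thin cofinal subset of $\kappa$ (such as a sparse club) along which the threshold in $f$ grows faster than any finite ordinal arithmetic can overtake. The case where $|X|$ is singular reduces to the regular case by passing to a subset of regular cardinality.
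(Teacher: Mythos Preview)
Your plan has the right skeleton, but the ``big jump'' function cannot be made to work, and the singular case does not reduce as you claim. The map $d$ must be fixed before $X$ is given, while the root $r$ depends on $X$; if $r$ contains an internal big jump relative to your threshold (e.g.\ $r=\{0,\omega\}$), then every sum from the condensation $Y$ has $r\subseteq\supp$, so $d$ of that sum always contains the jump-points of $r$, and in particular $\emptyset\notin d[\fs(Y)]$ --- hence no $A$ can satisfy $[A]^{<\omega}\subseteq d[\fs(Y)]$. Your sparse-club device does not help, since the club is fixed in advance and nothing prevents $r$ from meeting it. There is a second, related obstruction: each tail has fixed size $s$ after pigeonholing, and if $s>1$ it may carry its own internal jumps; even after pigeonholing on the pattern, summing $n$ elements contributes $n$ tails' worth of jump-points, forcing $|d(y_1+\cdots+y_n)|$ into a fixed residue class, so you cannot hit every finite cardinality. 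What is missing is a mechanism that decouples the output of $d$ from the fixed combinatorics of $r$ and the tails. The paper achieves this by letting $d(x)$ depend on $|\supp(x)|$: one fixes a surjection $f:\omega\to[\omega]^{<\omega}$ that hits every value along every arithmetic progression $\{m+nk:k<\omega\}$, sets $d(x)$ to be the image of $f(|\supp(x)|)$ under the increasing enumeration of $\supp(x)$, and then uses the \emph{number $k$ of summands} as the free parameter to force $f(m+nk)$ to pick out exactly the desired position-indices (and nothing from $r$).

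For the singular case: passing to a regular-cardinality subset of $X$ yields $|A|<|X|$, which does not meet the conclusion. The paper's argument here is genuinely more involved: $X$ is split into $\cf(\lambda)$ pieces of regular cardinalities cofinal in $\lambda=|X|$, the condensation is run on each piece separately, and the resulting $\Delta$-systems are interleaved so that a single $A$ of full size $\lambda$ can be extracted.
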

\begin{proof} We commence with an easy observation.
\begin{claim}\label{loglikefunction}
There exists a surjection $f:\omega\longrightarrow[\omega]^{<\omega}$ satisfying the two:
\begin{itemize}
\item for all $k<\omega$, $f(k)\s k$;
\item for all
$m,n<\omega$ and $\Omega\in[\omega]^{<\omega}$, there are infinitely many $k<\omega$ such that $f(m+nk)=\Omega$.
\end{itemize}
\end{claim}
\begin{proof}
For all $m,n,a,b<\omega$, the set
\begin{equation*}
D(m,n,a,b):=\{\varphi\in{}^\omega\omega\mid \exists k<\omega[k\ge a\ \&\ \varphi(m+nk)=b]\}
\end{equation*}
is dense open in the Baire space ${}^\omega\omega$.
So, by the Baire category theorem, $\bigcap\{D(m,n,a,b)\mid m,n,a,b<\omega\}\neq\emptyset$.
Pick $\varphi$ from that intersection, along with an arbitrary surjection $\psi:\omega\rightarrow[\omega]^{<\omega}$.
Then, define $f:\omega\rightarrow[\omega]^{<\omega}$, by stipulating $$f(k):=\begin{cases}(\psi\circ\varphi)(k),&\text{if }(\psi\circ\varphi)(k)\s k,\\0,&\text{otherwise}.\end{cases}$$

Clearly, $f$ is as sought.
\end{proof}

Fix $f$ as in the preceding.
For every finite set of ordinals $z$, let us denote by $\sigma_z:|z|\leftrightarrow z$ the order-preserving bijection,
so that $\sigma_z(i)$ stands for the $i^{\text{th}}$-element of $z$.

Next, embed $G$ into $\bigoplus_{\alpha<\kappa}G_\alpha$,
with each $G_\alpha$ a countable abelian group, as per
Lemma~\ref{embedding}. Then, define a colouring
$d:G\longrightarrow[\kappa]^{<\omega}$ by stipulating:
$$d(x):=\sigma_{\supp(x)}``f(|\supp(x)|).$$

To see that $d$ works, let $X$ be some uncountable subset of $G$.
The proof now splits into two cases, depending on $\lambda:=|X|$.

\underline{Case 1. Suppose that $\lambda$ is regular.}

Let $Y$ be given by Lemma~\ref{gotocondensation} with respect to $X$.
In particular, $\supp[Y]$ forms a $\Delta$-system of cardinality $\lambda$, with root, say, $r$.
Denote $m:=|r|$.
Clearly, by passing to an equipotent subset of $Y$, we may assume the existence of some positive integer $n$ and a strictly increasing function $h:m\rightarrow m+n$ such that
for every $y\in Y$:
\begin{itemize}
\item $|\supp(y)|=m+n$, and
\item $\sigma_r=\sigma_{\supp(y)}\circ h$.
\end{itemize}

Let $a$ be the maximal integer $\le m$ for which $h\restriction a$ is the identity function.
In particular, $\sigma_{\supp(y)}[a]=\sigma_r[a]$ for all $y\in Y$.

\begin{claim}\label{claim272} There exist  $b<\omega$, $\delta\le\kappa$, and a sequence $\langle y_i\mid i<\lambda\rangle$ of elements of $Y$ such that:
\begin{itemize}
\item $\{  \sigma_{\supp(y_i)}[a+b]\mid i<\lambda\}$ forms a head-tail-tail $\Delta$-system with root $\sigma_r[a]$;
\item $i\mapsto \sigma_{\supp(y_i)}(a)$ is strictly-increasing over $\lambda$;
\item $\sigma_{\supp(y_i)}[a+b]=\supp(y_i)\cap\delta$ for all $i<\lambda$.
\end{itemize}
\end{claim}
\begin{proof}
By the choice of $a$, for all $y\in Y$, we have $\min(\supp(y)\setminus r)=\sigma_{\supp(y)}(a)$.
In particular, $y\mapsto \sigma_{\supp(y)}(a)$ is injective over $Y$.
So, by the Dushnik-Miller theorem,
we may pick a sequence $\langle y_i\mid i<\lambda\rangle$ of elements of $Y$ such that $i\mapsto\sigma_{\supp(y_i)}(a)$ is strictly increasing over $\lambda$.
Put $\delta:=\sup\{\sigma_{\supp(y_i)}(a)\mid i<\lambda\}$.
Clearly, $\cf(\delta)=\lambda$.

Next, for all $j\le m+n$, let $\Lambda_j:=\{i<\lambda \mid \supp(y_i)\cap\delta= \sigma_{\supp(y_i)}[j]\}$.
This defines a partition of $\lambda$ into finitely many sets, and we may pick some positive integer $b$ such that $|\Lambda_{a+b}|=\lambda$.

Finally, recursively construct a (strictly-increasing) function $g:\lambda\rightarrow\Lambda_{a+b}$ as follows:

$\br$ Let $g(0):=\min(\Lambda_{a+b})$;

$\br$ If $i<\lambda$ is nonzero and $g\restriction i$ has already been defined,
let $\beta:=\sup_{j<i}(\supp(y_{g(j)})\cap\delta)$. By $i<\lambda=\cf(\delta)$, we have $\beta<\delta$,
so we may let $g(i):=\min\{ j\in\Lambda_{a+b} \mid \sigma_{\supp(y_j)}(a)>\beta\}$.

Clearly, $b,\delta$ and the sequence $\langle y_{g(i)}\mid i<\lambda\rangle$ are as sought.
\end{proof}

Let $b,\delta$ and $\langle y_i\mid i<\lambda\rangle$ be as in the statement of the preceding claim.
Denote $z_i:=\supp(y_i)\setminus (r\cap\delta)$. Notice that $\min(z_i)=\min(\supp(y_i)\setminus r)=\sigma_{\supp(y_i)}(a)$.

$$\frame{
\setlength{\unitlength}{0.8cm}
\begin{picture}(18.3,2)
\put(4,0.25){\textsf{Figure 1: Illustration of the system produced by Claim~\ref{claim272}.}}
\put(0,1){\vector(1,0){17.5}}
\linethickness{0.6mm}
\put(17.7,0.9){$\kappa$}
\put(0.5,0.92){$[$}
\put(1.7,0.92){$]$}
\put(0.5,1){\line(1,0){1.3}}
\put(0.7,1.3){$\sigma_r[a]$}
\put(2,0.92){$[$}
\put(3.3,.92){$]$}
\put(2.2,1.3){$\sigma_{z_0}[b]$}
\put(4,0.92){$[$}
\put(5.3,0.92){$]$}
\put(4.2,1.3){$\sigma_{z_1}[b]$}
\put(6,1.25){$\cdots$}
\put(7,0.92){$[$}
\put(8.3,0.92){$]$}
\put(7.2,1.3){$\sigma_{z_i}[b]$}
\put(9,1.25){$\cdots$}
\linethickness{0.4mm}
\put(10.05,0.75){\line(0,0){0.5}}
\put(9.95,1.35){$\delta$}
\put(11,0.92){$|$}
\put(10.4,1.35){$\sigma_{r}(a)$}
\put(13.3,0.92){$|$}
\put(12.4,1.35){$\sigma_{z_8}(b+1)$}
\put(15.4,0.92){$|$}
\put(14.6,1.4){$\sigma_{z_1}(b+1)$}
\put(16.7,1.25){$\cdots$}
\end{picture}
}$$

We claim that $d``\fs(X)\supseteq[A]^{<\omega}$
for the $\lambda$-sized set $A:=\{ \min(z_i)\mid i<\lambda\}$.

To see this, fix an arbitrary $p\in[A]^{<\omega}$.
Let  $\{ \alpha_j\mid j<c\}$ be the increasing enumeration of $p$, so that $c=|p|$.
Set $\Omega:=\{ a+bj\mid j<c\}$.
By the choice of the function $f$, let us fix some integer $k> c$ such that $f(m+nk)=\Omega$.

Let $\langle i_j \mid j<k\rangle$ be a strictly increasing sequence of ordinals in $\lambda$ such that
for all $j<c$, $i_j$ is the unique ordinal to satisfy $\alpha_j=\min(z_{i_j})$.
Put  $x:=\sum_{j<k} x_j$, where $x_j:=y_{i_j}$.
By $\{ y_{i_j}\mid j<k\}\s Y$, we have:
\begin{itemize}
\item  $x\in\fs(X)$;
\item  $\supp(x)=\bigcup_{j<k} \supp(x_j)=r\uplus(\supp(x_0)\setminus r)\uplus\cdots\uplus(\supp(x_{k-1})\setminus r)$;
\item $\supp(x)\cap\delta=
\sigma_r[a]\orderunion \sigma_{\supp(x_0)}``[a,a+b]\orderunion\cdots\orderunion\sigma_{\supp(x_{k-1})}``[a,a+b].$%
\footnote{Here, $w=z\orderunion z'$ asserts that $w=z\cup z'$ and $\sup(z)<\min(z')$.}
\end{itemize}

In particular, $|\supp(x)|=m+nk>a+bc$, and
\begin{align*}
d(x)=&\sigma_{\supp(x)}``f(m+nk)=\sigma_{\supp(x)}``\Omega\\
=&\{ \sigma_{\supp(x)}(a+bj)\mid j<c\}\\
=&\{ \sigma_{\supp(x_j)}(a)\mid j<c\}=\{ \sigma_{\supp(y_{i_j})}(a)\mid j<c\}\\
=&\{ \min(z_{i_j})\mid j<c\}=\{ \alpha_j\mid j<c\}=p,
\end{align*}
as sought.

\underline{Case 2. Suppose that $\lambda$ is singular.}

Let $\langle \lambda_\gamma\mid\gamma<\cf(\lambda)\rangle$ be a strictly increasing sequence of regular cardinals
converging to $\lambda$, with $\lambda_0>\cf(\lambda)$.
Let $\langle X_\gamma\mid \gamma<\cf(\lambda)\rangle$ be a partition of $X$ with $|X_\gamma|=\lambda_\gamma$
for all $\gamma<\cf(\lambda)$.

For each $\gamma<\cf(\lambda)$,  appeal to Lemma~\ref{gotocondensation} with $X_\gamma$, to obtain a set $Y_\gamma$.
In particular, $\supp[Y_\gamma]$ forms a $\Delta$-system of cardinality $\lambda_\gamma$, with root, say, $r_\gamma$.
Let $$\delta_\gamma:=\min\{\delta\le\kappa\mid |\{\min(\supp(y)\cap(\delta\setminus r_\gamma))\mid y\in Y_\gamma\}|=\lambda_\gamma\}.$$
Clearly, $\cf(\delta_\gamma)=\lambda_\gamma$. In particular, $\gamma\mapsto\delta_\gamma$ is injective over $\cf(\lambda)$,
and we may find some cofinal subset $\Gamma\s \cf(\lambda)$ over which $\gamma\mapsto\delta_\gamma$ is strictly increasing.
Put $\delta:=\sup_{\gamma\in\Gamma}\delta_\gamma$, and $r:=\bigcup_{\gamma\in\Gamma}r_\gamma$.

For all $\gamma\in\Gamma$, by $\lambda_\gamma>\cf(\lambda)=\cf(\delta)$,
let us fix a large enough $\beta_\gamma<\delta$ for which $$Y_\gamma^0:=\{ y\in Y_\gamma\mid \min(\supp(y)\setminus r_\gamma)<\delta_\gamma\ \&\ \supp(y)\cap\delta\s\beta_\gamma\}$$ has cardinality $\lambda_\gamma$.
Let $\bar\Gamma$ be some sparse enough cofinal subset of $\Gamma$ such that $$\sup\{\beta_{\gamma'}\mid \gamma'\in\bar\Gamma\cap\gamma\}<\delta_\gamma$$ for all $\gamma'<\gamma$ both from $\bar\Gamma$.

For all $\gamma\in\Gamma$, by minimality of $\delta_\gamma$ and by $\lambda_\gamma>|[r]^{<\omega}|$, we infer that
the following set has size $\lambda_{\gamma}$:
 $$Y^1_\gamma:=\{ y\in Y^0_\gamma\mid \left(\sup\{\beta_{\gamma'}\mid \gamma'\in\bar\Gamma\cap\gamma\}<\min(\supp(y)\setminus r_\gamma)\right)\ \&\ \left(y\cap r=r_\gamma\right)\}.$$

Put $Y:=\biguplus_{\gamma\in\bar\Gamma} Y^1_\gamma$.
For all $y\in Y$, let $\gamma(y)$ denote the unique ordinal $\gamma\in\bar\Gamma$ such that $y\in Y^1_\gamma$.

Consider the following subset of $\delta$: $$A:=\{\min(\supp(y)\setminus r_{\gamma(y)})\mid y\in Y\}\setminus r.$$

\begin{claim}\label{claim273} For each $\alpha\in A$, there exists a unique $y\in Y$ such that $\alpha\in\supp(y)$.

In particular, $|A|=\lambda$.
\end{claim}
\begin{proof} Let $\alpha\in A$ be arbitrary. Fix some $y_\alpha\in Y$ such that $\alpha=\min(\supp(y_\alpha)\setminus r_{\gamma(y_\alpha)})$.
Towards a contradiction, suppose that there exists $y\in Y\setminus\{y_\alpha\}$ with $\alpha\in\supp(y)$.
There are three cases to consider, each of which leads to a contradiction:
\begin{itemize}
\item[$\br$] Suppose that $\gamma(y_\alpha)=\gamma(y)$.

Then $\alpha\in\supp(y_\alpha)\cap\supp(y)\s r_{\gamma(y)}\s r$,
contradicting the fact that $A\cap r=\emptyset$.

\item[$\br$]  Suppose that  $\gamma(y_\alpha)\in\bar\Gamma\cap \gamma(y)$.

Then  $\alpha\in\supp(y_\alpha)\cap\delta\s \beta_{\gamma(y_\alpha)}<\min(\supp(y)\setminus r_{\gamma(y)})$.

So, by $\alpha\in\supp(y)$, it must be the case that $\alpha\in r_{\gamma(y)}\s r$, contradicting the fact that $A\cap r=\emptyset$.

\item[$\br$]  Suppose that $\gamma(y)\in\bar\Gamma\cap \gamma(y_\alpha)$.

Then $\alpha\in\supp(y)\cap\delta\s\beta_{\gamma(y)}<\min(\supp(y_\alpha)\setminus r_{\gamma(y_\alpha)})=\alpha$.
This is a contradiction. \qedhere

\end{itemize}
\end{proof}

To see that $d``\fs(X)\supseteq[A]^{<\omega}$,  let  $p$ be an arbitrary element of $[A]^{<\omega}$.
For each $\alpha\in p$, pick $y_\alpha\in Y$ such that $\min(\supp(y_\alpha)\setminus r_{\gamma(y_\alpha)})=\alpha$.
Write $z:=\sum_{\alpha\in p}y_\alpha$.\footnote{If $p=\emptyset$, then  $z$ stands for $0_G$ (the identity element of $G$).}
By Claim~\ref{claim273}, we have $p\s\supp(z)$.

Fix a large enough $\gamma\in\bar\Gamma$ such that $\gamma>\gamma(y_\alpha)$ for all $\alpha\in p$.
Put $\epsilon:=\min\left(\bigcup\{\supp(y)\setminus r_\gamma\mid y\in Y_\gamma^1\}\right)$.
By the choice of $\gamma$ and the definition of $Y^1_\gamma$, we have $p\s\epsilon$.

Next, by $|\prod_{\beta\in r_\gamma} G_\beta|\le\omega$ and the pigeonhole principle, let us fix an uncountable $Y^2_\gamma\s Y^1_\gamma$, a sequence $\langle g_\beta\mid \beta\in r_\gamma\rangle$,
and a positive integer $n$, such that for all $y\in Y^2_\gamma$:
\begin{itemize}
\item $y(\beta)=g_\beta$ for all $\beta\in r_\gamma$;
\item $|\supp(y)\setminus r_\gamma|=n$.
\end{itemize}

Let $\beta\in r_\gamma$ be arbitrary.
As $Y_\gamma$ was provided by Lemma~\ref{gotocondensation} and $\beta$ belongs to the root of $\supp[Y_\gamma]$,
we get that for all $y\in Y^2_\gamma$, $g_\beta=y(\beta)$ has an infinite order.
In particular, $|\{k<\omega\mid z(\beta)+k g_\beta=0_{G_\beta}\}|\le1$.

As $r_\gamma$ is finite, let us pick a large enough $K<\omega$ such that $\{k<\omega\mid \exists \beta\in r_\gamma[z(\beta)+k g_\beta=0_{G_\beta}]\}\s K$,
so that $z(\beta)+k g_\beta\neq 0_{G_\beta}$ for all $\beta\in r_\gamma$ and $k\ge K$.

Pick an injective sequence $\langle y^i \mid i<K\rangle$ of elements of $Y^1_\gamma$.
Write $z':=z+\sum_{i<K}y^i$, and $m:=|\supp(z')|$.

Put $Y^2_\gamma:=Y^1_\gamma\setminus\{ y^i\mid i<K\}$. Recalling Claim~\ref{claim273}, we infer that for every finite $\mathcal Y\s Y^2_\gamma$:
\begin{itemize}
\item $p\uplus r_\gamma\s \supp(z'+\sum_{y\in\mathcal Y}y)$;
\item $|\supp(z'+\sum_{y\in\mathcal Y}y)|=m+n|\mathcal Y|$;
\item $\supp(z')\cap\epsilon$ is an initial segment of $\supp(z'+\sum_{y\in\mathcal Y}y)$.
\end{itemize}

By $p\s\supp(z')\cap\epsilon$, let us fix $\Omega\in[|\supp(z')\cap\epsilon|]^{<\omega}$ such that $\sigma_{\supp(z')\cap\epsilon}``\Omega=p$.
By the choice of the function $f$, let us fix some $k<\omega$ such that $f(m+nk)=\Omega$.
Pick $\mathcal Y\in[Y^2_\gamma]^k$, and set $x:=z'+\sum_{y\in\mathcal Y}y$.

Then $x\in\fs(X)$,
$|\supp(x)|=m+nk$, and
\begin{align*}
d(x)=&\sigma_{\supp(x)}``f(m+nk)=\sigma_{\supp(x)}``\Omega\\
=& (\sigma_{\supp(x)}\restriction|\supp(z')\cap\epsilon|) ``\Omega\\
=&\sigma_{\supp(z')\cap\epsilon}``\Omega=p,
\end{align*}
as sought.
\end{proof}

\begin{corollary}\label{c29} For every uncountable cardinals $\lambda\le\kappa$ and a cardinal $\theta$, the following are equivalent:
\begin{enumerate}
\item $\kappa\nrightarrow[\lambda]^{<\omega}_\theta$ holds;
\item $G\nrightarrow[\lambda]^\fs_\theta$ holds for every commutative cancellative semigroup $G$ of cardinality $\kappa$;
\item $G\nrightarrow[\lambda]^\fs_\theta$ holds for some commutative cancellative semigroup $G$ of cardinality $\kappa$.
\end{enumerate}
\end{corollary}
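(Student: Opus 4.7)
The plan is to establish the cyclic implications $(1)\Rightarrow(2)\Rightarrow(3)\Rightarrow(1)$. The real technical work is already bundled inside Theorem~\ref{multicube}; what remains for this corollary is essentially bookkeeping, and the resulting proof is short.

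For $(1)\Rightarrow(2)$, I fix an arbitrary commutative cancellative semigroup $G$ of cardinality $\kappa$ and a colouring $c_0:[\kappa]^{<\omega}\rightarrow\theta$ witnessing $\kappa\nrightarrow[\lambda]^{<\omega}_\theta$. The key move is to invoke Theorem~\ref{multicube} to secure a transformation $d:G\rightarrow[\kappa]^{<\omega}$ with the stated property, and then simply take $c:=c_0\circ d$. For any $X\in[G]^\lambda$ (which is uncountable since $\lambda\geq\omega_1$), the theorem hands us some $A\in[\kappa]^\lambda$ with $[A]^{<\omega}\s d``\fs(X)$, so
\begin{equation*}
c[\fs(X)]=c_0[d``\fs(X)]\supseteq c_0[[A]^{<\omega}]=\theta,
\end{equation*}
verifying $G\nrightarrow[\lambda]^\fs_\theta$ for that $G$.

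The implication $(2)\Rightarrow(3)$ is vacuous beyond observing that at least one commutative cancellative semigroup of cardinality $\kappa$ exists; for instance, $\bigoplus_{\alpha<\kappa}\mathbb Z$ does the job. For $(3)\Rightarrow(1)$, I fix $G$ of cardinality $\kappa$ together with a colouring $c:G\rightarrow\theta$ witnessing $G\nrightarrow[\lambda]^\fs_\theta$, and any bijection $\phi:\kappa\leftrightarrow G$. Define $c':[\kappa]^{<\omega}\rightarrow\theta$ by
\begin{equation*}
c'(F):=c\!\left(\sum_{\alpha\in F}\phi(\alpha)\right);
\end{equation*}
commutativity of $G$ guarantees this is well-defined on unordered finite subsets. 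Given any $A\in[\kappa]^\lambda$, the image $X:=\phi``A$ lies in $[G]^\lambda$, and for every $\gamma\in\theta$ there exists some $a\in[X]^{<\omega}$ with $c(\sum_{x\in a}x)=\gamma$; the set $F:=\phi^{-1}``a$ then lies in $[A]^{<\omega}$ and satisfies $c'(F)=\gamma$, confirming $\kappa\nrightarrow[\lambda]^{<\omega}_\theta$.

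No step of this argument presents a genuine obstacle, because the crucial ingredient — converting finite $G$-sums into essentially arbitrary finite subsets of $\kappa$ in a controlled fashion — is delivered in ready-to-use form by Theorem~\ref{multicube}. Without that theorem the direction $(1)\Rightarrow(2)$ would be the hard part; with it in hand the corollary is a clean wrap-up of the work done so far in the section.
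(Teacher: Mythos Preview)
Your proposal is correct and follows essentially the same route as the paper: compose a witness to $\kappa\nrightarrow[\lambda]^{<\omega}_\theta$ with the transformation $d$ of Theorem~\ref{multicube} for $(1)\Rightarrow(2)$, and for $(3)\Rightarrow(1)$ push a colouring of $G$ back along a bijection $\kappa\leftrightarrow G$ to colour finite subsets by the colour of their sum. The only cosmetic difference is that you explicitly name a concrete semigroup for $(2)\Rightarrow(3)$, while the paper merely calls the implication trivial.
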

\begin{proof} $(1)\implies(2)$ Suppose that $\lambda,\kappa,\theta$ are as above, and that $c:[\kappa]^{<\omega}\rightarrow\theta$ witnesses $\kappa\nrightarrow[\lambda]^{<\omega}_\theta$.
That is, for every $A\in[\kappa]^\lambda$, we have $c``[A]^{<\omega}=\theta$.
Now, given a commutative cancellative semigroup $G$ of cardinality $\kappa$,
let $d:G\rightarrow[\kappa]^{<\omega}$ be given by Theorem~\ref{multicube}.
Clearly, $c\circ d$ witnesses $G\nrightarrow[\lambda]_{\theta}^\fs$.

$(2)\implies(3)$ This is trivial.

$(3)\implies(1)$ Suppose that $G$ is a commutative cancellative semigroup of cardinality $\kappa$,
and that $d:G\rightarrow\theta$ is a colouring witnessing $G\nrightarrow[\lambda]^\fs_\theta$.
Fix an injective enumeration $\{ x_\alpha\mid \alpha<\kappa\}$ of the elements of $G$,
and define $c:[\kappa]^{<\omega}\rightarrow\theta$ by stipulating $c(\{\alpha_0,\ldots,\alpha_n\}):=d(x_{\alpha_0}+\cdots+x_{\alpha_n})$.
Clearly, $c$ witnesses $\kappa\nrightarrow[\lambda]^{<\omega}_\theta$.
\end{proof}

Recall that $(\kappa,\mu)\twoheadrightarrow(\lambda,\theta)$ asserts that for every structure $(A,R,\ldots)$
for a countable first-order language with a distinguished unary predicate,
if $(|A|,|R|)=(\kappa,\mu)$,
then there exists an elementary substructure $(B,S,\ldots)\prec (A,R,\ldots)$ with $(|B|,|S|)=(\lambda,\theta)$.

To exemplify, let us point out that if $\theta$ is an infinite cardinal and there exists a $\theta^+$-Kurepa tree, then $(\theta^{++},\theta^+)\twoheadrightarrow(\theta^+,\theta)$ fails.
Also note that the instance $(\omega_2,\omega_1)\twoheadrightarrow(\omega_1,\omega)$ is known as \emph{Chang's conjecture}.

\begin{corollary}\label{chang} For every infinite regular cardinal $\theta$ and every cardinal $\kappa>\theta$, the following are equivalent:
\begin{itemize}
\item $(\kappa,\theta^+)\twoheadrightarrow(\theta^+,\theta)$ fails;
\item $G\nrightarrow[\theta^+]^\fs_{\theta^+}$ holds for every commutative cancellative semigroup $G$ of cardinality $\kappa$.
\end{itemize}
\end{corollary}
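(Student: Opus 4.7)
The proof proceeds by first invoking Corollary~\ref{c29} to identify the second bullet ``$G\nrightarrow[\theta^+]^\fs_{\theta^+}$ for every commutative cancellative semigroup $G$ of cardinality $\kappa$'' with the negative partition relation $\kappa\nrightarrow[\theta^+]^{<\omega}_{\theta^+}$, thereby reducing Corollary~\ref{chang} to the classical equivalence
\[
\neg\bigl((\kappa,\theta^+)\twoheadrightarrow(\theta^+,\theta)\bigr)\iff\kappa\nrightarrow[\theta^+]^{<\omega}_{\theta^+}.
\]

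For the direction $(\Leftarrow)$, I would fix a colouring $c:[\kappa]^{<\omega}\to\theta^+$ witnessing the partition relation and form the structure $\mathfrak{A}:=(\kappa,\theta^+,c)$ in a countable first-order language, with $\theta^+$ as distinguished unary predicate and $c$ coded as a function symbol (its finite-set argument obtained via a standard pairing function). For any elementary substructure $\mathfrak{B}=(B,S,c\restriction[B]^{<\omega})$ with $|B|=\theta^+$, elementarity of the sentence ``$\forall s\,(c(s)\in\theta^+)$'' gives $c''[B]^{<\omega}\subseteq S$; combined with $c''[B]^{<\omega}=\theta^+$, this forces $S=\theta^+$ and $|S|=\theta^+\ne\theta$, ruling out any $(\theta^+,\theta)$-type substructure.

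For the direction $(\Rightarrow)$, I would fix a witness structure $\mathfrak{A}=(\kappa,\theta^+,\ldots)$ for the failure of Chang in a countable language with universe $\kappa$ and distinguished unary predicate $\theta^+$, then Skolemize and enrich (keeping the language countable) by adding a projection $\pi:\kappa\to\theta^+$ with $\pi\restriction\theta^+=\text{id}$, ordinal arithmetic on $\kappa$, a pairing function, and a system of cofinal sequences into each ordinal below $\theta^+$. Since enriching only shrinks the class of elementary substructures, the failure of Chang is preserved. Combining all Skolem terms into a single function $H:[\kappa]^{<\omega}\to\kappa$ with $H''[B]^{<\omega}=\text{Hull}^\mathfrak{A}(B)$ for every $B\subseteq\kappa$, I would set $c:=\pi\circ H:[\kappa]^{<\omega}\to\theta^+$. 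For every $B\in[\kappa]^{\theta^+}$, the Skolem hull $\text{Hull}^\mathfrak{A}(B)$ is an elementary substructure of $\mathfrak{A}$ of size $\theta^+$, so the failure of Chang yields $|\text{Hull}^\mathfrak{A}(B)\cap\theta^+|\ne\theta$. The enrichment is designed precisely to upgrade this cardinality constraint to the set equality $\text{Hull}^\mathfrak{A}(B)\cap\theta^+=\theta^+$, whence $c''[B]^{<\omega}=\theta^+$, as required.

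The main obstacle is this final upgrade: passing from ``$|{\cdot}|\ne\theta$'' to the set equality ``$\cdot=\theta^+$''. The large-cardinality alternative $|{\cdot}|\ge\theta^+$ collapses to the desired set equality once the hull is downward-closed and unbounded in $\theta^+$, which the enrichment arranges via the cofinal-sequence functions. The small-cardinality alternative $|{\cdot}|<\theta$ must be excluded by a bootstrapping move: any purported bad $B$ can be augmented with $\theta$ fresh ordinals from $\theta^+\setminus\text{Hull}^\mathfrak{A}(B)$, and a careful closure computation using the enriched Skolem functions should produce from this an elementary substructure of type exactly $(\theta^+,\theta)$, contradicting the hypothesis. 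Managing these two analyses simultaneously—arranging the enrichment so that downward-closure, unboundedness, and the controlled augmentation all function in concert—is the technical heart of the classical partition-calculus argument.
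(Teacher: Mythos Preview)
Your reduction via Corollary~\ref{c29} and your $(\Leftarrow)$ argument are correct and match the paper. The problem is entirely in $(\Rightarrow)$.

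What the Skolem-hull coding you describe actually yields (and what the paper cites as Theorem~8.1 of \cite{MR2731169}) is the equivalence of $\neg\bigl((\kappa,\theta^+)\twoheadrightarrow(\theta^+,\theta)\bigr)$ with the \emph{weaker} relation $\kappa\nrightarrow[\theta^+]^{<\omega}_{\theta^+,\theta}$: a colouring whose image on every $\theta^+$-set has \emph{cardinality} $\theta^+$, not one whose image \emph{equals} $\theta^+$. Your two ``obstacles'' are precisely the gap between these two relations, and neither of your proposed fixes works as stated. For the large-cardinality case, deducing $\mathrm{Hull}(B)\cap\theta^+=\theta^+$ from $|\mathrm{Hull}(B)\cap\theta^+|=\theta^+$ via your cofinal-sequence functions $g_\alpha:\theta\to\alpha$ requires $\theta\subseteq\mathrm{Hull}(B)$ (so that the $g_\alpha$ actually cover $\alpha$); this is not arranged by your enrichment and need not hold for an arbitrary $B\in[\kappa]^{\theta^+}$. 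For the small-cardinality case, augmenting $B$ by $\theta$ fresh ordinals below $\theta^+$ and passing to the hull certainly gives a trace of size at least $\theta$, but nothing prevents the \emph{mixed} Skolem terms (combining the fresh ordinals with the $\theta^+$ many elements of $B$) from generating $\theta^+$ many elements below $\theta^+$; ``should produce'' is exactly the missing argument. A diagnostic symptom: your proof never invokes the regularity of $\theta$, yet this hypothesis is doing real work.

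The paper instead accepts the intermediate relation $\kappa\nrightarrow[\theta^+]^{<\omega}_{\theta^+,\theta}$ and upgrades it to $\kappa\nrightarrow[\theta^+]^{<\omega}_{\theta^+}$ by composing with a witness to Todor\v{c}evi\'c's $\theta^+\nrightarrow[\theta^+]^2_{\theta^+}$ (which holds precisely because $\theta$ is regular), via Shore's lemma \cite{MR0371662}. That composition is what replaces your unfinished bootstrapping, and it is where the regularity hypothesis actually enters.
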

\begin{proof}
By a standard coding argument (using Skolem functions),
the failure of $(\kappa,\theta^+)\twoheadrightarrow(\theta^+,\theta)$ is equivalent  to $\kappa\nrightarrow[\theta^+]^{<\omega}_{\theta^+,\theta}$.\footnote{For a proof, see Theorem 8.1 of \cite{MR2731169}.}
Thus, recalling Corollary~\ref{c29}, it suffices to prove that
$\kappa\nrightarrow[\theta^+]^{<\omega}_{\theta^+,\theta}$ is equivalent to $\kappa\nrightarrow[\theta^+]^{<\omega}_{\theta^+}$.
Of course, only the forward implication requires an argument.
Now, as $\theta$ is regular, we get from \cite{MR908147} that  $\theta^+\nrightarrow[\theta^+]^2_{\theta^+}$ holds.
Then, as shown in \cite{MR0371662}, the conjunction of $\kappa\nrightarrow[\theta^+]^{<\omega}_{\theta^+,\theta}$ with  $\theta^+\nrightarrow[\theta^+]^2_{\theta^+}$ entails $\kappa\nrightarrow[\theta^+]^{<\omega}_{\theta^+}$.
\end{proof}

\begin{corollary}\label{c27}
For every uncountable commutative cancellative semigroup $G$,  $G\nrightarrow[\omega_1]_\omega^\fs$ holds.
\end{corollary}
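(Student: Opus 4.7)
The strategy is to deduce this corollary as an immediate application of Corollary~\ref{c29}. Given an uncountable commutative cancellative semigroup $G$ of cardinality $\kappa$, observe that $\omega_1\le\kappa$, so Corollary~\ref{c29} applies with $\lambda:=\omega_1$ and $\theta:=\omega$; this reduces the task of verifying $G\nrightarrow[\omega_1]_\omega^\fs$ to that of establishing the purely combinatorial relation $\kappa\nrightarrow[\omega_1]^{<\omega}_\omega$.

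The latter will be witnessed, trivially, by the colouring $c:[\kappa]^{<\omega}\rightarrow\omega$ defined by $c(s):=|s|$. Indeed, for any $X\in[\kappa]^{\omega_1}$ and any $n<\omega$, since $|X|\ge\omega>n$, one may pick any $s\in[X]^n$ to obtain $c(s)=n$; therefore $c``[X]^{<\omega}=\omega$, as required.

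The main (and only) obstacle was the reduction itself, namely Corollary~\ref{c29}, whose proof already invoked the multicube machinery of Theorem~\ref{multicube}. Once that reduction is in hand, the corollary collapses to a triviality. This stands in sharp contrast with the analogous partition relations for $\fs_n$-sets considered in later sections, where even the underlying finite-exponent combinatorial relation $\kappa\nrightarrow[\omega_1]^n_\omega$ is genuinely non-trivial.
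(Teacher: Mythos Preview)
Your proof is correct and matches the paper's own argument essentially verbatim: both invoke Corollary~\ref{c29} with $\lambda=\omega_1$, $\theta=\omega$, and witness $\kappa\nrightarrow[\omega_1]^{<\omega}_\omega$ via the cardinality map $s\mapsto|s|$.
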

\begin{proof} The map $z\mapsto|z|$ witnesses that $\kappa\nrightarrow[\omega]_\omega^{<\omega}$ holds for every infinite cardinal $\kappa$.
In particular, $\kappa\nrightarrow[\omega_1]^{<\omega}_\omega$ holds for every uncountable cardinal $\kappa$.
Now, appeal to Corollary~\ref{c29} with $\lambda=\omega_1$ and $\theta=\omega$.
\end{proof}

Modulo a large cardinal hypothesis, the preceding is optimal:
\begin{prop}\label{p29} If there exists an $\omega_1$-Erd\H{os} cardinal, then in some forcing extension,
 $\mathbb R\nrightarrow[\omega_1]^\fs_{\omega_1}$ fails.
 Furthermore, in this forcing extension, for every semigroup $(G,*)$ of size continuum and every colouring $c:G\rightarrow\omega_1$, there exists an uncountable subset $X\s G$
for which $\{c(x_0*\cdots*x_n)\mid n<\omega, x_0,\ldots,x_n\in X\}$ is countable.
\end{prop}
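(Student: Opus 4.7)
The plan is to invoke a consistency theorem of Shelah~\cite[Theorem~2.1]{MR955139} and combine it with a routine model-theoretic reflection argument.

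First, Shelah's theorem shows that from the existence of an $\omega_1$-Erd\H{o}s cardinal one may pass to a forcing extension of $\zfc$ in which the two-cardinal Chang-type principle $(\mathfrak c,\omega_1)\twoheadrightarrow(\omega_1,\omega)$ holds. All subsequent work takes place in such an extension.

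Second, I would deduce the ``furthermore'' clause directly from the Chang principle, since the first assertion will then follow by specialization. Fix a semigroup $(G,*)$ with $|G|=\mathfrak c$ and a colouring $c:G\to\omega_1$. Consider the structure
$$\mathcal M:=(G\uplus\omega_1;P,*,c)$$
in the countable first-order language $\{P,*,c\}$, where $P$ is interpreted as $\omega_1$, $*$ extends the semigroup operation of $G$ (arbitrarily on the rest of the universe), and $c$ is extended by declaring $c\restriction\omega_1=\mathrm{id}_{\omega_1}$. Since $|M|=\mathfrak c$ and $|P^{\mathcal M}|=\omega_1$, the Chang principle yields an elementary substructure $\mathcal N\prec\mathcal M$ with $|N|=\omega_1$ and $|N\cap\omega_1|=\aleph_0$. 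Setting $X:=N\cap G$, elementarity gives that $X$ is closed under $*$ and that $c[X]\s N\cap\omega_1$, so $|c[X]|\le\aleph_0$, while cardinal arithmetic gives $|X|=\omega_1$. Any iterated product $x_0*\cdots*x_n$ with $x_i\in X$ lies in $X$ by closure, whence
$$\{c(x_0*\cdots*x_n)\Mid n<\omega,\ x_0,\ldots,x_n\in X\}\s c[X]$$
is countable, establishing the ``furthermore''.

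Finally, the first assertion follows by specializing the ``furthermore'' to $(G,*)=(\mathbb R,+)$: given any $c:\mathbb R\to\omega_1$, the resulting uncountable $X\s\mathbb R$ satisfies $c[\fs(X)]\s c[X]$, which is countable, so $c[\fs(X)]\ne\omega_1$, refuting $\mathbb R\nrightarrow[\omega_1]^\fs_{\omega_1}$. The main obstacle is the invocation of Shelah's theorem in the precise Chang-principle form used above; once that is settled, the balance of the proof is an entirely routine model-theoretic reflection. (Alternatively, one could bypass the model-theoretic step in proving the first assertion by appealing to Corollary~\ref{c29}: the Chang principle immediately gives $\mathfrak c\to[\omega_1]^{<\omega}_{\omega_1,\omega}$, and the corollary transfers this to $\mathbb R\to[\omega_1]^\fs_{\omega_1}$.)
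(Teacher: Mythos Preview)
Your approach is correct but differs from the paper's. The paper argues directly: it lets $\kappa$ be the $\omega_1$-Erd\H{o}s cardinal, forces with $\kappa$ many Cohen reals so that $\mathfrak c=\kappa$, and then, given a name $\mathring c$ for a colouring of a semigroup $G$ (with underlying set $\kappa$), defines in the ground model a function $d:[\kappa]^{<\omega}\to[\omega_1]^{\le\omega}$ collecting, for each finite tuple, all colours that could be forced for any product of its entries (in any order). The \textit{ccc} ensures each value of $d$ is countable, so $|[\omega_1]^{\le\omega}|<\kappa$ and the Erd\H{o}s property yields $H\in[\kappa]^{\omega_1}$ on which $d\restriction[H]^n$ is constant (say $\{A_n\}$) for every $n$; then $A=\bigcup_n A_n$ is countable and bounds all colours of products from $H$.

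You instead black-box the forcing step by invoking Shelah to get $(\mathfrak c,\omega_1)\twoheadrightarrow(\omega_1,\omega)$, then run a clean Skolem--L\"owenheim argument. This is conceptually tidy and meshes well with Corollary~\ref{chang}, which records precisely the equivalence you are exploiting. The trade-off is that the paper's proof is self-contained---it needs only the definition of an $\omega_1$-Erd\H{o}s cardinal and the \textit{ccc}---whereas yours leans on a nontrivial external theorem whose exact content you flag as the ``main obstacle''. Indeed, be careful with the citation: \cite[Theorem~2.1]{MR955139} is cited in this paper only for the failure of $\mathbb R\nrightarrow[\omega_1]^{\fs_n}_3$, not explicitly for the Chang-type transfer $(\mathfrak c,\omega_1)\twoheadrightarrow(\omega_1,\omega)$; you would want to verify that the model Shelah produces really satisfies this stronger principle (or cite a source that does). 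Note, incidentally, that the paper's own Cohen-forcing argument can be read as establishing exactly the Chang principle you need, so in a sense the paper supplies the missing black box.
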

\begin{proof} Let $\kappa$ denote the $\omega_1$-Erd\H{o}s cardinal.
So $\kappa$ is strongly inaccessible and satisfies that for every $\theta<\kappa$ and every colouring $d:[\kappa]^{<\omega}\rightarrow\theta$,
there exists some $H\in[\kappa]^{\omega_1}$ such that $d\restriction[H]^n$ is constant for all $n<\omega$.

Let $\mathbb P$ denote the notion of forcing for adding $\kappa$ many Cohen reals. We claim that the forcing extension $V^{\mathbb P}$ is as sought.

Suppose that $\mathring{c}$ is a $\mathbb P$-name for a colouring $c:G\rightarrow\omega_1$ of a given semigroup $(G,*)$ of size continuum.
As $V^{\mathbb P}\models \mathfrak c=\kappa$, let us simplify the matter and just assume that the underlying set $G$ is in fact $\kappa$.

Working in $V$, define a colouring $d:[\kappa]^{<\omega}\rightarrow[\omega_1]^{\le\omega}$
by letting for all $n<\omega$ and all $\alpha_0<\ldots<\alpha_n<\kappa$:
$$d(\alpha_0,\ldots,\alpha_n):=\{ \delta<\omega_1\mid \exists \sigma:\{0,\ldots,n\}\rightarrow\{0,\ldots,n\}\exists p\in\mathbb P[p\Vdash``\mathring{c}({\check\alpha_{\sigma(0)}}*\cdots*\check\alpha_{\sigma(n)})=\check\delta"]\}.$$

As $\mathbb P$ is \textit{ccc}, the range of $d$ indeed consists of \emph{countable} subsets of $\omega_1$.
As $\kappa$ is an $\omega_1$-Erd\H{o}s cardinal, $|[\omega_1]^{\le\omega}|<\kappa$ and we may pick some $H\in[\kappa]^{\omega_1}$ such that $d``[H]^n$ is a singleton, say $\{A_n\}$, for every positive integer $n$.
Then $H$ is an uncountable subset of $G$, $A:=\bigcup_{n=1}^\infty A_n$ is a countable subset of $\omega_1$,
and  $$\Vdash``\forall n\in\check\omega\forall x_0,\ldots,x_n\in \check H[\mathring{c}(\check{x_0}*\cdots*\check{x_n})\in \check{A}]".\qedhere$$
\end{proof}

It is also consistent that the number of colours in Corollary~\ref{c27} \emph{may} be increased to the maximal possible value.
To see this, simply take $\theta$ to be $\omega_1$ in the next statement:
\begin{corollary}\label{cor29} It is consistent with $\zfc+\gch$ that for
every commutative cancellative semigroup $G$, $G\nrightarrow[\theta]_{\theta}^{\fs}$ holds for every uncountable cardinal $\theta$.
\end{corollary}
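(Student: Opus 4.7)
The plan is to invoke Corollary~\ref{c29} to recast the statement as a pure partition relation on cardinals, and then to exhibit a model of $\zfc+\gch$ in which this relation holds for every relevant pair.

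First, I would apply Corollary~\ref{c29} with $\lambda=\theta$. For a commutative cancellative semigroup $G$ of cardinality $\kappa\ge\theta$, the relation $G\nrightarrow[\theta]^\fs_\theta$ is then equivalent to $\kappa\nrightarrow[\theta]^{<\omega}_\theta$, while for $|G|<\theta$ the relation holds vacuously. Hence Corollary~\ref{cor29} is equivalent to the assertion that it is consistent with $\zfc+\gch$ that
\begin{equation*}
(\dagger)\quad\kappa\nrightarrow[\theta]^{<\omega}_\theta\text{ holds for every pair of uncountable cardinals }\theta\le\kappa.
\end{equation*}

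Next, I would choose as my target universe a model of $V=L$ (which satisfies $\gch$) in which no uncountable cardinal is J\'onsson. Such a model exists, since the hypothesis of no J\'onsson cardinals is consistent with $\zfc$, and one may take the $L$ of a ground model in which it holds. Under this hypothesis, $\theta\nrightarrow[\theta]^{<\omega}_\theta$ holds for every uncountable $\theta$, supplying the base case $\kappa=\theta$ of $(\dagger)$. To lift this to $\kappa>\theta$, I would exploit the $\square_\mu$-sequences of $L$ (available for every infinite $\mu$) together with its canonical well-ordering to fix, for each $\alpha<\kappa$, a coherent bijection $e_\alpha\colon|\alpha|\to\alpha$, and then transform a base witness $c\colon[\theta]^{<\omega}\to\theta$ into a colouring $c'\colon[\kappa]^{<\omega}\to\theta$ by pulling back finite subsets along these $e_\alpha$. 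A case analysis depending on $\sup(H)$ for $H\in[\kappa]^\theta$ should then yield $c'``[H]^{<\omega}=\theta$.

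The hardest step will be the lifting from $\theta$ to $\kappa$, particularly when $\theta$ is singular or $\cf(\kappa)<\theta$: in these configurations no single bijection $e_\alpha$ captures an entire $\theta$-sized $H\s\kappa$, so one must interleave the witness $c$ along a cofinal sequence in $H$ and invoke the coherence of the underlying $\square$-sequence to ensure that the pullback remains a bona fide strong colouring on $[H]^{<\omega}$. Once this coherent interleaving is set up, verifying the resulting partition relation is essentially a bookkeeping exercise combining the non-J\'onsson property at the cofinality level with the fine structure of $L$.
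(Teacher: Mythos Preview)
Your reduction via Corollary~\ref{c29} to the partition statement $(\dagger)$ is exactly the paper's first move. The divergence is in how $(\dagger)$ is obtained: the paper does not construct anything, but simply works in $L$ (or in $L_\mu$ for $\mu$ the least $L$-inaccessible, if one exists) and cites Shore's theorem \cite{MR0371662} that in such a model $\kappa\nrightarrow[\theta]^{<\omega}_\theta$ holds for all uncountable $\theta\le\kappa$. That citation is the entire remaining argument.

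What you are proposing is, in effect, to reprove Shore's result from scratch, and the step you yourself flag as hardest is where the proposal fails to deliver. Fixing a system $e_\alpha:|\alpha|\to\alpha$ does not by itself define a colouring of $[\kappa]^{<\omega}$: for a finite $F\s\kappa$ you must commit to a single $\alpha$ along which to pull back, yet for a $\theta$-sized $H\s\kappa$ that $\alpha$ varies across $[H]^{<\omega}$, and mere coherence of the $e_\alpha$ does not force the resulting values to exhaust $\theta$. When $|\sup(H)|>\theta$, a single pullback lands in an ordinal of cardinality exceeding $\theta$, so you have not even reduced to the base witness; iterating only reproduces the difficulty one level down. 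Your closing phrase about ``combining the non-J\'onsson property at the cofinality level with the fine structure of $L$'' names the problem rather than solves it --- Shore's actual argument uses fine-structural combinatorics substantially beyond $\square$ plus a family of bijections. A smaller point: your justification for a J\'onsson-free model of $V=L$ is off, since passing to the $L$ of a J\'onsson-free ground model need not preserve the absence of J\'onsson cardinals (cardinals can differ between $V$ and $L$); one must argue directly about what holds in $L$.
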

\begin{proof} If there exists an inaccessible cardinal in G\"odel's constructible universe $L$,
then let $\mu$ denote the least such one and work in $L_\mu$. Otherwise, work in $L$.

In both cases, we end up with a model of $\zfc+\gch$ \cite{MR0002514},
satisfying $\kappa\nrightarrow[\theta]_{\theta}^{<\omega}$ for every uncountable cardinals $\theta\le\kappa$ \cite{MR0371662}.
Now, appeal to Corollary~\ref{c29}.
\end{proof}

The preceding is quite surprising (think of the instance $|G|=\beth_{\theta+\omega}$),
but of course we are standing on the shoulders of Rowbottom \cite{MR0323572}.

\section{A set-theoretic interlude}\label{infcombinatorics}

This section is dedicated to study of the following new set-theoretic principles:

\begin{definition}\label{circledast} For infinite cardinals $\chi\le\kappa$, and an arbitrary cardinal $\theta\le\kappa$:
\begin{itemize}
\item $\axiom(\kappa,\theta)$ asserts
the existence of a colouring $d:[\kappa]^{<\omega}\rightarrow\theta$ satisfying the following.
For every $\kappa$-sized family $\mathcal X\s[\kappa]^{<\omega}$ and every $\delta<\theta$,
there exist two distinct $x,y\in\mathcal X$ such that $d(z)=\delta$ whenever $(x\symdiff y)\s z\s (x\cup y)$;

\item $\axiomi(\kappa,\theta,\chi)$ asserts
the existence of a colouring $d:\mathcal [\kappa]^{<\chi}\rightarrow\theta$ satisfying the following.
For every integer $n\ge 2$, every sequence $\langle \mathcal X_i \mid i<n\rangle\in\prod_{i<n}[[\kappa]^{<\chi}]^\kappa$,
and every $\delta<\theta$,
there exists $\langle x_i\mid i<n\rangle\in\prod_{i<n}\mathcal X_i$ such that $\langle \sup(x_i)\mid i<n\rangle$ is strictly increasing,
and such that $d(z)=\delta$ whenever
\begin{equation*}
\bigcup_{i<n}\left(x_i\setminus\bigcup_{j\in n\setminus\{i\}}x_j\right)\s z\s \bigcup_{i<n}x_i.
\end{equation*}

\end{itemize}
\end{definition}

Hindman's theorem is well-known to be equivalent to a Ramsey-theoretic statement concerning block sequences.
Thus, the reader may want to observe that whenever $d:[\kappa]^{<\omega}\rightarrow\theta$ witnesses  $\axiom(\kappa,\theta)$,
then for every block sequence $\vec{x}=\langle x_\alpha \mid \alpha<\kappa\rangle$ of finite subsets of $\kappa$ (i.e., satisfying $\max(x_\alpha)<\min(x_\beta)$ for all $\alpha<\beta<\kappa$), we have $d``\fu(\vec{x})=\theta$ (indeed,
for every $\delta<\theta$, there exist  $\alpha<\beta<\kappa$
such that $d(x_{\alpha}\cup x_{\beta})=\delta$).
Observe that if $d$ furthermore witnesses $\axiomi(\kappa,\theta,\omega)$,
then for every positive $n<\omega$ and every $\delta<\theta$,
there exist $\alpha_0<\dots<\alpha_n<\kappa$ such that $d(x_{\alpha_0}\cup\cdots\cup x_{\alpha_n})=\delta$.

\begin{prop}\label{prop32} For all infinite $\chi\le\chi'\le\kappa$, and all $\theta\le\theta'\le\kappa$:
\begin{enumerate}
\item $\axiomi(\kappa,\theta',\chi')$ entails $\axiomi(\kappa,\theta,\chi)$;
\item $\axiomi(\kappa,\theta,\chi)$ entails $\kappa\nrightarrow[\kappa;\kappa]^2_\theta$;
\item $\axiomi(\kappa,\theta,\chi)$ entails $\axiom(\kappa,\theta)$;
\item $\axiom(\kappa,\theta)$ entails $\kappa\nrightarrow[\kappa]^2_\theta$.
\end{enumerate}
\end{prop}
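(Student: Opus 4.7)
The plan is to treat the four clauses of Proposition~\ref{prop32} as a sequence of straightforward monotonicity/reduction arguments; no part requires genuine combinatorial work, only careful bookkeeping of definitions.

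I would begin with (1), which splits into pure monotonicity in each of $\chi$ and $\theta$. For monotonicity in $\chi$: since $[\kappa]^{<\chi}\s[\kappa]^{<\chi'}$, any witness $d:[\kappa]^{<\chi'}\to\theta$ to $\axiomi(\kappa,\theta,\chi')$, restricted to $[\kappa]^{<\chi}$, is already a witness to $\axiomi(\kappa,\theta,\chi)$, because any family $\mathcal X_i\in[[\kappa]^{<\chi}]^\kappa$ is a fortiori a family in $[[\kappa]^{<\chi'}]^\kappa$, and the witnessing tuple $\langle x_i\rangle$ returned by the original principle lies in $\prod_{i<n}\mathcal X_i$, so all $x_i$ actually belong to $[\kappa]^{<\chi}$. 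For monotonicity in $\theta$: fix any surjection $\pi:\theta'\twoheadrightarrow\theta$ and set $d':=\pi\circ d$; given a target $\delta<\theta$, pick any $\delta'\in\pi^{-1}\{\delta\}$ and feed the given $\mathcal X_i$'s together with $\delta'$ into the $\axiomi(\kappa,\theta',\chi)$-witness, obtaining $\langle x_i\rangle$ with $d(z)=\delta'$ on the prescribed range, whence $d'(z)=\delta$ there.

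For (3), take a witness $d:[\kappa]^{<\chi}\to\theta$ to $\axiomi(\kappa,\theta,\chi)$ and restrict it to $[\kappa]^{<\omega}$ (legal since $\omega\le\chi$). Given $\mathcal X\in[[\kappa]^{<\omega}]^\kappa$ and $\delta<\theta$, apply $\axiomi$ with $n=2$ and $\mathcal X_0=\mathcal X_1=\mathcal X$. The strict increase of $\langle\sup(x_0),\sup(x_1)\rangle$ forces $x_0\ne x_1$, and at $n=2$ the inner bound $\bigcup_{i<2}(x_i\setminus\bigcup_{j\in2\setminus\{i\}}x_j)$ collapses verbatim to $x_0\symdiff x_1$ while the outer bound is $x_0\cup x_1$; so the conclusion of $\axiomi$ is word-for-word the conclusion of $\axiom(\kappa,\theta)$.

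For (2) and (4), the idea is to extract an ordinary pair-colouring by plugging singletons into the principle. For (2), let $d$ witness $\axiomi(\kappa,\theta,\chi)$ and set $c:=d\restriction[\kappa]^2$. Given $A,B\in[\kappa]^\kappa$ and $\delta<\theta$, apply $\axiomi$ with $n=2$, $\mathcal X_0:=\{\{\alpha\}\mid\alpha\in A\}$, and $\mathcal X_1:=\{\{\beta\}\mid\beta\in B\}$, to obtain $\alpha\in A$, $\beta\in B$ with $\alpha<\beta$; the range of admissible $z$ collapses to the singleton $\{\{\alpha,\beta\}\}$, so $c(\{\alpha,\beta\})=d(\{\alpha,\beta\})=\delta$. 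Part (4) is the same trick: let $d$ witness $\axiom(\kappa,\theta)$, set $c:=d\restriction[\kappa]^2$, and apply $\axiom$ to $\mathcal X:=\{\{\alpha\}\mid\alpha\in A\}$ for a given $A\in[\kappa]^\kappa$. The only point that deserves a second glance is verifying at $n=2$ that the symmetric-difference and union bounds in $\axiomi$/$\axiom$ really do collapse to the ordinary pair structure when the $x_i$'s are singletons, which they do.
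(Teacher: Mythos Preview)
Your proof is correct and essentially matches the paper's approach: singletons for clauses~(2) and~(4), restriction of the witness for~(1) and~(3). The one notable difference is in clause~(3): the paper partitions $\mathcal X$ into two disjoint $\kappa$-sized pieces $\mathcal X_0\uplus\mathcal X_1$ (which requires first arguing, via clause~(2) and Ramsey's theorem, that $\kappa$ is uncountable), whereas you simply take $\mathcal X_0=\mathcal X_1=\mathcal X$ and let the strict inequality $\sup(x_0)<\sup(x_1)$ guarantee $x_0\neq x_1$. Your route is slightly cleaner, since it avoids the detour through uncountability of $\kappa$; the paper's route, on the other hand, would still work for a hypothetical variant of $\axiomi$ that did not include the strict-increase clause on the suprema.
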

\begin{proof} (1) This is obvious.

(2) Let $d:[\kappa]^{<\chi}\rightarrow\theta$ be a witness to $\axiomi(\kappa,\theta,\chi)$.
Define $c:[\kappa]^2\rightarrow\theta$ by letting $c(\alpha,\beta):=d(\{\alpha,\beta\})$.
Now, suppose that we are given $X,Y\in[\kappa]^\kappa$.
We need to verify that for all $\delta<\theta$, there exist $\alpha\in X$ and $\beta\in Y$ such that $\alpha<\beta$ and $c(\alpha,\beta)=\delta$.
Put $\mathcal X:=\{\{\alpha\}\mid\alpha\in X\}$ and $\mathcal Y:=\{\{\beta\}\mid \beta\in Y\}$.
Fix $x\in\mathcal X$ and $y\in\mathcal Y$ such that $\sup(x)<\sup(y)$ and $d(z)=\delta$ whenever $(x\symdiff y)\s z\s (x\cup y)$.
Let $\alpha:=\sup(x)$ and $\beta:=\sup(y)$. Then $\alpha\in X$, $\beta\in Y$, $\alpha<\beta$ and $c(\alpha,\beta)=\delta$, as sought.

(3) Let $d:\mathcal [\kappa]^{<\chi}\rightarrow\theta$ be a witness to $\axiomi(\kappa,\theta,\chi)$.
We claim that $d\restriction[\kappa]^{<\omega}$ witnesses $\axiom(\kappa,\theta)$.
To avoid trivialities, suppose that $\theta>1$.
In particular, by Clause~(2) and Ramsey's theorem, $\kappa$ is uncountable.

Given a $\kappa$-sized family $\mathcal X\s[\kappa]^{<\omega}$,
pick a partition $\mathcal X=\mathcal X_0\uplus\mathcal X_1$ with $|\mathcal X_0|=|\mathcal X_1|=\kappa$.
Then, by the choice of $d$, for every $\delta<\theta$, there exist $x\in \mathcal X_0$ and $y\in \mathcal X_1$ such that $d(z)=\delta$
whenever $(x\symdiff y)\s z\s (x\cup y)$. Clearly, $x,y$ are distinct elements of $\mathcal X$.

(4) Similar to the proof of Clause~(2).
\end{proof}

\begin{prop}\label{singulars} Suppose that $\lambda>\cf(\lambda)=\kappa$ are infinite cardinals.
Then for every cardinals $\theta,\chi$:
\begin{enumerate}
\item  $\axiom(\kappa,\theta)$ entails  $\axiom(\lambda,\theta)$;
\item  $\axiomi(\kappa,\theta,\chi)$ entails  $\axiomi(\lambda,\theta,\chi)$, provided that $\mu^{<\chi}<\lambda$ for every cardinal $\mu<\lambda$.
\end{enumerate}
\end{prop}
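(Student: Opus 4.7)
The plan is to lift a witnessing coloring from $\kappa$ up to $\lambda$ via projection onto a cofinal decomposition of $\lambda$. Fix a strictly increasing sequence $\langle\lambda_i\mid i<\kappa\rangle$ cofinal in $\lambda$ with $\lambda_0=0$; set $A_i:=[\lambda_i,\lambda_{i+1})$, so $\lambda=\biguplus_{i<\kappa}A_i$. Define $\pi:[\lambda]^{<\chi}\to[\kappa]^{<\chi}$ by $\pi(z):=\{i<\kappa\mid z\cap A_i\neq\emptyset\}$ (taking $\chi=\omega$ for clause~(1)), and given a witness $d$ to the hypothesis on $\kappa$, let $d':=d\circ\pi$. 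I claim $d'$ witnesses the desired principle on $\lambda$.

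The central observation is that $\pi$ commutes with unions and satisfies the inclusion $\pi(x)\symdiff\pi(y)\s\pi(x\symdiff y)$: any $i$ for which exactly one of $x\cap A_i$, $y\cap A_i$ is nonempty must automatically lie in $\pi(x\symdiff y)$. Consequently, whenever $x\symdiff y\s z\s x\cup y$, one gets $\pi(x)\symdiff\pi(y)\s\pi(z)\s\pi(x)\cup\pi(y)$, so $d'(z)=d(\pi(z))$ is controlled by the $d$-behavior on the interval $[\pi(x)\symdiff\pi(y),\pi(x)\cup\pi(y)]$. The pointwise argument extends verbatim to the $n$-fold version required in clause~(2): the analogous inclusion for $\pi(z)$ relative to $\pi(x_0),\dots,\pi(x_{n-1})$ holds. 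Moreover, since each $v_i\s\kappa$ with $|v_i|<\chi\le\kappa$ is bounded in $\kappa$, the elementary inequality $\sup(x_i)<\lambda_{\sup(\pi(x_i))+1}\le\sup(x_{i+1})$ transfers strict $\sup$-increase from $\kappa$ to $\lambda$.

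The main technical point, and where the hypothesis $\mu^{<\chi}<\lambda$ (for clause~(2)) is deployed, is a fiber-counting argument showing $|\pi[\mathcal X]|\ge\kappa$. Given $\mathcal X\in[[\lambda]^{<\chi}]^\lambda$ and $v\in\pi[\mathcal X]$, the fiber $\pi^{-1}(v)\cap\mathcal X$ embeds in $[B_v]^{<\chi}$ for $B_v:=\bigcup_{j\in v}A_j$. Since $|v|<\chi\le\kappa=\cf(\lambda)$ and $\kappa$ is regular, $v$ is bounded in $\kappa$, whence $|B_v|<\lambda$. The fiber therefore has cardinality at most $|B_v|^{<\chi}$, which is $<\lambda$: in clause~(1) this is immediate since $|B_v|^{<\omega}\le\max(|B_v|,\omega)<\lambda$, while in clause~(2) it is precisely the content of the standing assumption. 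Combining $\lambda=|\mathcal X|=\sum_{v\in\pi[\mathcal X]}|\pi^{-1}(v)\cap\mathcal X|$ with $\cf(\lambda)=\kappa$ then forces $|\pi[\mathcal X]|\ge\kappa$.

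To finish, for (1) one feeds a $\kappa$-sized subfamily of $\pi[\mathcal X]$ together with the color $\delta$ into $\axiom(\kappa,\theta)$, obtaining distinct $u,v\in[\kappa]^{<\omega}$ on which $d$ is constantly $\delta$ over $[u\symdiff v,u\cup v]$; any lift to $x\in\pi^{-1}(u)\cap\mathcal X$, $y\in\pi^{-1}(v)\cap\mathcal X$ then meets the definitional requirement. For (2) one analogously applies $\axiomi(\kappa,\theta,\chi)$ to $\kappa$-sized subfamilies of $\pi[\mathcal X_0],\dots,\pi[\mathcal X_{n-1}]$ and lifts the resulting $v_0,\dots,v_{n-1}$ back arbitrarily. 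The hardest step is really the cardinal-arithmetic bookkeeping that underpins the fiber-count inequality $|\pi[\mathcal X]|\ge\kappa$ — this is the unique place where the hypothesis of clause~(2) is used — but beyond this, the entire proposition is a direct transfer via the projection lemma.
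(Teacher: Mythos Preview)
Your argument is correct and follows the same route as the paper: pull back a witness on $\kappa$ through the projection induced by a cofinal block decomposition of $\lambda$ (the paper writes this as $d_h(z):=d(h[z])$ with $h(\alpha):=\otp(\Lambda\cap\alpha)$ for a club $\Lambda\subseteq\lambda$ of order-type $\kappa$, which is your $\pi$ in different notation), and use the fiber-count $|\pi^{-1}(v)|\le|B_v|^{<\chi}<\lambda$ to ensure the projected family has size $\kappa$.

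One technical caveat in clause~(2): the inequality you call elementary, $\sup(x_i)<\lambda_{\sup(\pi(x_i))+1}$, need not be \emph{strict} when $\chi>\omega$, since an infinite $x_i$ of size $<\chi$ can be cofinal in $\lambda_{\sup(v_i)+1}$ if the latter has small cofinality; combined with $\sup(x_{i+1})=\lambda_{\sup(v_i)+1}$ (which occurs when $x_{i+1}$'s top block contains only the left endpoint $\lambda_{\sup(v_i)+1}$), this would give $\sup(x_i)=\sup(x_{i+1})$. The fix is painless: simply take each $\lambda_{j+1}$ to be a successor ordinal, so that any subset of $[0,\lambda_{j+1})$ has supremum strictly below $\lambda_{j+1}$. (The paper's own proof is equally terse here, deferring to ``similar to that of Clause~(1)''.)
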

\begin{proof} Pick a club $\Lambda$ in $\lambda$ with $\otp(\Lambda)=\kappa$,
and derive a mapping $h:\lambda\rightarrow\kappa$ by stipulating $h(\alpha):=\otp(\Lambda\cap\alpha)$.

(1) Let $d:[\kappa]^{<\omega}\rightarrow\theta$ be a witness to $\axiom(\kappa,\theta)$.
Define $d_h:[\lambda]^{<\omega}\rightarrow\theta$ by stipulating $d_h(z):=d(h[z])$.

To see that $d_h$ witnesses $\axiom(\lambda,\theta)$, suppose that we are given a $\lambda$-sized family $\mathcal X\s[\lambda]^{<\omega}$,
and a prescribed colour $\delta<\theta$. Put $\mathcal X_h:=\{h[x]\mid x \in\mathcal X\}$.
As $|[\mu]^{<\omega}|<\lambda=|\mathcal X|$ for all $\mu\in\Lambda$, we infer that $\mathcal X_h$ is a $\kappa$-sized subfamily of $[\kappa]^{<\omega}$.
Thus, by the choice of $d$, we may pick two distinct $x',y'\in\mathcal X_h$ such that $d(z')=\delta$ whenever $(x'\symdiff y')\s z'\s (x'\cup y')$.
Now, find $x,y\in\mathcal X$ such that $h[x]=x'$ and $h[y]=y'$. Clearly, $x$ and $y$ are distinct.
Finally, suppose that $z$ is some set satisfying $(x\symdiff y)\s z\s (x\cup y)$.
Then $(x'\symdiff y')\s h[z]\s (x'\cup y')$, and hence $d_h(z)=d(h[z])=\delta$, as sought.

(2) Let $d:[\kappa]^{<\chi}\rightarrow\theta$ be a witness to $\axiomi(\kappa,\theta,\chi)$.
Define $d_h:[\lambda]^{<\chi}\rightarrow\theta$ by stipulating $d_h(z):=d(h[z])$.
Note that for every $\lambda$-sized family $\mathcal X\s[\lambda]^{<\chi}$,
 $\mathcal X_h:=\{h[x]\mid x \in\mathcal X\}$ is a $\kappa$-sized sufamily of $[\lambda]^{<\chi}$,
 because $|[\mu]^{<\chi}|<\lambda=|\mathcal X|$ for all $\mu\in\Lambda$.
The rest of the verification is similar to that of Clause~(1).
\end{proof}

Recall that $\pr_1(\kappa,\kappa,\theta,\chi)$ asserts the existence of a
colouring $c:[\kappa]^2\rightarrow\theta$ satisfying that
for every $\gamma<\theta$ and every $\mathcal A\s[\kappa]^{<\chi}$ of size $\kappa$,
consisting of pairwise disjoint sets, there exist  $x,y\in\mathcal A$ with $\sup(x)<\min(y)$ for which $c[x\times y]=\{\gamma\}$.

\begin{lemma}\label{pr1lemma} Suppose that $\pr_1(\kappa,\kappa,\theta,\chi)$ holds for given infinite cardinals $\chi\le\theta\le\kappa=\cf(\kappa)$.

If $\kappa$ is uncountable, and $\mu^{<\chi}<\kappa$ for every cardinal $\mu<\kappa$, then  $\axiomi(\kappa,\theta,\chi)$ holds.
\end{lemma}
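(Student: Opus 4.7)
Let $c:[\kappa]^2\to\theta$ be a colouring witnessing $\pr_1(\kappa,\kappa,\theta,\chi)$. The plan is to define $d:[\kappa]^{<\chi}\to\theta$ in the shape $d(z):=c(\alpha(z),\beta(z))$ for two canonically chosen elements $\alpha(z),\beta(z)\in z$, thought of as a ``low'' and a ``high'' element of $z$. These are to be chosen so that, in the configurations produced below, $\alpha(\bigcup_i x_i)$ lies in $x_0$ and $\beta(\bigcup_i x_i)$ lies in $x_{n-1}$, and so that the pair depends only on the \emph{exclusive} elements of $\bigcup_i x_i$, i.e.\ is unchanged when ``overlap'' elements are toggled in or out of $z$. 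For $\chi=\omega$ the naive choice $(\min z,\max z)$ already has this flavour; in general one uses an ordinal-position-based variant, and making this truly work is the main technical point.

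To verify that such a $d$ witnesses $\axiomi(\kappa,\theta,\chi)$, fix $n\ge 2$, $\langle\mathcal X_i\mid i<n\rangle\in\prod_{i<n}[[\kappa]^{<\chi}]^\kappa$, and a target colour $\delta<\theta$. Since $\kappa=\cf(\kappa)$ is uncountable and $\mu^{<\chi}<\kappa$ for all $\mu<\kappa$, the $\Delta$-system lemma lets us refine each $\mathcal X_i$ to an equipotent head-tail-tail $\Delta$-subsystem $\mathcal X'_i$ with root $r_i$. A further round of pigeonhole arranges that the $r_i$'s are pairwise disjoint, all bounded below some common $\eta^*<\kappa$, and that every tail $x\setminus r_i$ (for $x\in\mathcal X'_i$) sits above $\eta^*$. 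Enumerating each $\mathcal X'_i=\{x^i_\xi\mid\xi<\kappa\}$ by increasing $\min(x^i_\xi\setminus r_i)$ and interleaving produces a cofinal $S\s\kappa$ such that any selection $x_i:=x^i_{\xi_i}$ with $\xi_0<\cdots<\xi_{n-1}$ drawn from $S$ with large enough gaps delivers tails in head-tail-tail order across $i$. Combined with the root arrangement, the resulting $x_i$'s are pairwise disjoint, so every element of $\bigcup_i x_i$ is exclusive to a unique $x_i$; hence the condition on $z$ in the definition of $\axiomi$ collapses to $z=\bigcup_i x_i$. The task reduces to finding a suitable tuple with $d(\bigcup_i x_i)=\delta$.

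The last step is a direct appeal to $\pr_1$. Let $\alpha_\xi$ and $\beta_\xi$ denote the values taken by $\alpha(\cdot)$ and $\beta(\cdot)$ on any union $\bigcup_i x^i_{\xi_i}$ in which $x^0_\xi$ is the first block and $x^{n-1}_\xi$ is the last block; the whole design of $\alpha,\beta$ is that these values depend only on $\xi$ and not on the remaining choices. After mild thinning, $\mathcal A:=\{\{\alpha_\xi,\beta_\xi\}\mid\xi\in S\}$ is a pairwise disjoint $\kappa$-sized subfamily of $[\kappa]^{<\chi}$, so $\pr_1(\kappa,\kappa,\theta,\chi)$ supplies $\xi<\xi'$ in $S$ with $c[\{\alpha_\xi,\beta_\xi\}\times\{\alpha_{\xi'},\beta_{\xi'}\}]=\{\delta\}$. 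Setting $x_0:=x^0_\xi$, $x_{n-1}:=x^{n-1}_{\xi'}$, and filling in the middle $x_i$'s from $\mathcal X'_i$ (using $S$ to preserve tail order), the strictly-increasing-sup requirement is immediate and $d(\bigcup_i x_i)=c(\alpha_\xi,\beta_{\xi'})=\delta$, as required.

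The hardest part is designing the operators $\alpha(\cdot)$ and $\beta(\cdot)$: they must be honest functions of $z$ alone, yet provably unchanged under the allowed variations of $z$, and simultaneously provably coincide with elements of the intended blocks in the thinned configuration. The cardinal-arithmetic assumption $\mu^{<\chi}<\kappa$ for all $\mu<\kappa$ is used precisely to make the $\Delta$-system refinements yield equipotent subfamilies.
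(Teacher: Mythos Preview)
Your sketch has a genuine gap at the step ``a further round of pigeonhole arranges that the $r_i$'s are pairwise disjoint.'' This cannot be done: the roots $r_i$ are dictated by the families $\mathcal X_i$, not by you. If, say, $0\in x$ for every $x\in\bigcup_i\mathcal X_i$ (or simply $\mathcal X_0=\mathcal X_1=\cdots=\mathcal X_{n-1}$), then every root contains $0$ no matter how you refine. Consequently the $x_i$'s are \emph{not} pairwise disjoint in general, the condition $\bigcup_i(x_i\setminus\bigcup_{j\neq i}x_j)\subseteq z\subseteq\bigcup_i x_i$ does \emph{not} collapse to a single $z$, and your operators $\alpha(\cdot),\beta(\cdot)$ must be invariant under toggling the overlap elements in and out of $z$. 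You acknowledge this as ``the hardest part'' but never actually define $\alpha,\beta$; the naive $(\min z,\max z)$ fails since $\min z$ can jump between a shared root element and a tail element as $z$ varies.

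The paper's proof supplies exactly the missing idea. Rather than trying to pick canonical elements of $z$, it \emph{encodes the selection into the colouring itself}: fix a bijection $\pi:\theta\leftrightarrow\theta\times\chi$, split $c$ into $c_0:[\kappa]^2\to\theta$ and $c_1:[\kappa]^2\to\chi$, and set $d(z):=c_0(\alpha,\beta)$ for some pair $\langle\alpha,\beta\rangle\in[z]^2$ achieving the maximal $c_1$-value on $[z]^2$. In the verification one first pigeonholes to fix $\epsilon:=\max c_1$ over each ``root $\cup$ tail-block'' configuration, and then applies $\pr_1$ to the (genuinely pairwise disjoint) tail-blocks with target colour $\pi^{-1}\langle\delta,\epsilon+1\rangle$. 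This forces every pair of maximal $c_1$-value in $[z]^2$ to lie in the product of two tails---exactly the region contained in every admissible $z$---so $d(z)=\delta$ regardless of which overlap elements $z$ retains. The trick of reserving a $\chi$-coordinate of the colour to outvote the uncontrolled part of $z$ is the substance of the argument, and it is absent from your proposal.
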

\begin{proof}
Let $c:[\kappa]^2\rightarrow\theta$ be a witness to $\pr_1(\kappa,\kappa,\theta,\chi)$.
Fix a bijection $\pi:\theta\leftrightarrow\theta\times\chi$. Define $c_0:[\kappa]^2\rightarrow\theta$ and $c_1:[\kappa]^2\rightarrow\chi$
in such a way that if $c(\alpha,\beta)=\gamma$ and $\pi(\gamma)=\pair{\delta,\epsilon}$, then $c_0(\alpha,\beta)=\delta$ and $c_1(\alpha,\beta)=\epsilon$.

Now, define $d:\mathcal [\kappa]^{<\chi}\rightarrow\theta$ as follows. Let $z\in[\kappa]^{<\chi}$ be arbitrary.
If $M_z:=\{ \pair{\alpha,\beta}\in[z]^2\mid c_1(\alpha,\beta)=\sup(c_1``[z]^2)\}$ is nonempty,
then let $d(z):=c_0(\alpha,\beta)$ for an arbitrary choice of $\pair{\alpha,\beta}$ from $M_z$.
Otherwise, let $d(z):=0$.

To see that $d$ works, suppose we are given a sequence $\langle \mathcal X_i \mid i<n\rangle\in\prod_{i<n}[[\kappa]^{<\chi}]^\kappa$,
for some integer $n\ge2$, along with some prescribed colour $\delta<\theta$ .

By thinning-out, we may assume that for all $i<n$, $\mathcal X_i$ forms a $\Delta$-system with root, say, $r_i$.\footnote{This is where we use the hypothesis that $\mu^{<\chi}<\kappa$ for every cardinal $\mu<\kappa$.}
In particular, for all $i<n$, $\{ x\setminus r_i\mid x\in\mathcal X_i\}$ consists of $\kappa$ many pairwise disjoint bounded subsets of $\kappa$.
Consequently, we can construct (e.g., by recursion on $\gamma<\kappa$) a matrix $\langle x^\gamma_i \mid \gamma<\kappa, i<n \rangle$ in such a way that for all $\gamma<\gamma'<\kappa$ and $i<j<n$:
\begin{itemize}
\item $r_i\uplus x^\gamma_i \in\mathcal X_i$;
\item $\sup(r_0\cup\cdots\cup r_{n-1})<\min(x^\gamma_i)\le\sup(x^\gamma_i)<\min(x^\gamma_j)\le\sup(x_j^\gamma)<\min(x_0^{\gamma'})$.
\end{itemize}

By the pigeonhole principle, let us fix $\Gamma\in[\kappa]^\kappa$ and $\epsilon<\chi$ such that for all $\gamma\in \Gamma$:
\begin{itemize}
\item $\sup(c_1``[r_0\cup\cdots\cup r_{n-1}\cup x^\gamma_0\cup\cdots\cup x^\gamma_{n-1}]^2)=\epsilon$.
\end{itemize}

Denote $a_\gamma:=x_0^\gamma\uplus\cdots\uplus x_{n-1}^\gamma$. As $\mathcal A:=\{ a_\gamma\mid \gamma\in \Gamma\}$
is a $\kappa$-sized subfamily of $[\kappa]^{<\chi}$ consisting of pairwise disjoint sets,
we may now pick $\gamma<\gamma'$ both from $\Gamma$ for which $c[a_\gamma\times a_{\gamma'}]=\{\pi^{-1}\pair{\delta,\epsilon+1}\}$.

Write $\bar x_0:=r_0\uplus x_0^\gamma$,
and for all nonzero $i<n$, write $\bar x_i:=r_i\uplus x_i^{\gamma'}$.
Clearly, $\langle \bar x_i\mid i<n\rangle\in\prod_{i<n} \mathcal X_i$, and $\langle \sup(\bar x_i)\mid i<n\rangle$ is strictly increasing.
Next, suppose that we are given $z$ satisfying
$$\bigcup_{i<n}\left(\bar x_i\setminus\bigcup_{j\in n\setminus\{i\}}\bar x_j\right)\s z\s \bigcup_{i<n}\bar x_i.$$

\begin{claim} $M_z$ is a nonempty subset of $a_\gamma\times a_{\gamma'}$.
\end{claim}
\begin{proof} Let $\pair{\alpha,\beta}\in [z]^2$ be arbitrary.
As $\bigcup_{i<n}\bar x_i=\left(\bigcup_{i<n} r_i\right)\uplus x_0^\gamma\uplus \left(\biguplus_{0<i<n} x_i^{\gamma'}\right)$,
we consider the following cases:
\begin{enumerate}
\item Suppose that $\alpha\in \left(\bigcup_{i<n} r_i\right)$.

By $\beta\in(r_0\cup\cdots\cup r_{n-1}\cup x_0^\gamma\cup x_1^{\gamma'}\cup x_{n-1}^{\gamma'})$, we have $c_1(\alpha,\beta)\le\epsilon$.
\item Suppose that $\alpha\in x_0^\gamma$.
\begin{enumerate}
\item If $\beta\in(r_0\cup\cdots\cup r_{n-1}\cup x_0^\gamma)$, then $c_1(\alpha,\beta)\le\epsilon$;
\item If $\beta\in \biguplus_{0<i<n} x_i^{\gamma'}$, then $\pair{\alpha,\beta}\in a_\gamma\times a_{\gamma'}$ and hence $c(\alpha,\beta)=\pi^{-1}\pair{\delta,\epsilon+1}$,
so that $c_1(\alpha,\beta)=\epsilon+1$.
\end{enumerate}
\item Suppose that $\alpha\in \biguplus_{0<i<n} x_i^{\gamma'}$.
\begin{enumerate}
\item If $\beta\in x_0^\gamma$, then $\alpha>\beta$ which gives a contradiction to $\pair{\alpha,\beta}\in[z]^2$;
\item If $\beta\in r_0\cup\cdots\cup r_{n-1}\cup x_1^{\gamma'}\cup\cdots x_{n-1}^{\gamma'}$, then $c_1(\alpha,\beta)\le\epsilon$.
\end{enumerate}
\end{enumerate}

Finally, by $$z\supseteq
\bigcup_{i<n}\left(\bar x_i\setminus\bigcup_{j\in n\setminus\{i\}}\bar x_j\right)\supseteq
\left(\bigcup_{i<n}\bar x_i\setminus\bigcup_{i<n}r_i\right),$$
we have $x_0^\gamma\times x_1^{\gamma'}\s [z]^2$, so that case (2)(b)
is indeed feasible. Consequently, $M_z$ is a nonempty subset of $a_i\times a_j$.
\end{proof}

Let $\pair{\alpha,\beta}\in M_z$ be such that $d(z)=c_0(\alpha,\beta)$.
By the preceding claim, $c(\alpha,\beta)=\pi^{-1}\pair{\delta,\epsilon+1}$, so that $d(z)=c_0(\alpha,\beta)=\delta$, as sought.
\end{proof}

The colouring principle $\pr_1(\cdots)$ was studied extensively by many authors,
including Eisworth, Galvin, Rinot, Shelah, and Todor\v{c}evi\'c.
To mention a few results:

\begin{fact}\label{factpr1}
Suppose that $\kappa$ is a regular uncountable cardinal.

Then $\pr_1(\kappa,\kappa,\theta,\chi)$ holds in all of the following cases:
\begin{enumerate}
\item $\kappa=\theta=\mathfrak b=\omega_1$ and $\chi=\omega$;
\item $\kappa=\theta>\chi^+$, and $\square(\kappa)$ holds;\footnote{The definition of $\square(\kappa)$ may be found in \cite[p.~267]{MR908147}.}
\item\label{nonreflecting} $\kappa=\theta>\chi^+$, and $E^\kappa_{\ge\chi}$ admits a non-reflecting stationary set;
\item\label{successor_of_regular} $\kappa=\theta=\lambda^+>\chi^+$, and $\lambda$ is regular;
\item\label{pp} $\kappa=\theta=\lambda^+$, $\lambda$ is singular, $\chi=\cf(\lambda)$, and $\pp(\lambda)=\lambda^+$ (e.g., $\lambda^{\cf(\lambda)}=\lambda^+$);\footnote{For a light introduction to $\pp(\lambda)$, see \cite{rinot05}.}
\item $\kappa=\theta=\lambda^+$, $\lambda$ is singular, $\chi=\cf(\lambda)$, and there exists a collection of $<\cf(\lambda)$ many stationary subsets of $\kappa$ that do not reflect simultaneously;
\item\label{successor_of_singular} $\kappa=\lambda^+$, $\lambda$ is singular, and $\theta=\chi=\cf(\lambda)$.
\end{enumerate}
\end{fact}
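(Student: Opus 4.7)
The plan is to treat this statement as a compendium and derive each clause from the corresponding result in the literature on strong colorings, all of which flow from a common framework: Todor\v{c}evi\'c's method of walks on ordinals. Concretely, I would fix a $C$-sequence $\vec{C}=\langle C_\alpha\mid\alpha<\kappa\rangle$ whose structural properties are tuned to the hypothesis of the given clause, form the associated trace functions $\rho_0,\rho_1,\rho_2$ and the oscillation $\osc$, and define the witnessing coloring to have the form $c(\alpha,\beta)=e(\osc(\rho_2(\cdot,\alpha,\beta)))$ for a suitable bijection $e$, so that the desired positive value on a rectangle $x\times y$ can be engineered by selecting $\alpha\in x,\beta\in y$ for which the walk from $\beta$ down to $\alpha$ steps through a prescribed portion of $\vec{C}$.

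For Clause~(1), I would take the $C$-sequence provided by an unbounded family of functions witnessing $\mathfrak b=\omega_1$ and invoke Todor\v{c}evi\'c's theorem that $\pr_1(\omega_1,\omega_1,\omega_1,\omega)$ is a consequence of $\mathfrak b=\omega_1$, as proved (for instance) in his book \emph{Walks on Ordinals}. Clauses~(2) and~(3) follow by feeding, respectively, a $\square(\kappa)$-sequence or a non-reflecting stationary subset of $E^\kappa_{\ge\chi}$ into the same machinery: both hypotheses yield a $C$-sequence along which one can uniformize values of $\osc$, and this uniformization is exactly what pushes $\pr_1$ past $\chi$ many sizes. Clauses~(4), (5), (6) and~(7) handle the successor cardinal case and are driven by Shelah's club-guessing theorems on $E^{\lambda^+}_{\cf(\lambda)}$ (together with pcf input in~(5)); the relevant productions of $\pr_1$ at such $\lambda^+$ appear in Shelah's \emph{Cardinal Arithmetic} and in subsequent refinements by Eisworth and by Rinot.

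The main obstacle, in each clause, is to show that the oscillation of the walk is \emph{unbounded} on every large rectangle, for this is what allows one to hit any prescribed color by precomposing with a bijection $e$. In Clauses~(2)--(3) the unboundedness comes from the coherence/non-reflection of $\vec{C}$, giving room to perturb $\beta$ through cofinally many values of $\rho_2$; in Clauses~(4)--(7) the corresponding input is a club-guessing sequence whose guesses are almost disjoint, which secures enough independence among the columns of the matrix to realize every oscillation pattern. Once unboundedness is established, passing from a coloring into $\kappa$ to a coloring into the prescribed $\theta$, and from pairs of points to pairs $x,y\in[\kappa]^{<\chi}$ of pairwise disjoint sets with $\sup(x)<\min(y)$, is by now routine.

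Given that every clause has a published proof, the honest way to write this ``Fact'' in the paper is as a list of citations: Clause~(1) to \cite{MR908147}; Clauses~(2) and~(3) to \cite{MR908147} and its refinements; Clause~(4) to Shelah's work on successors of regulars; Clause~(5) to pcf-theoretic constructions in \emph{Cardinal Arithmetic}; Clause~(6) to the non-simultaneous reflection variant of the same; and Clause~(7) to the classical Shelah coloring at $\lambda^+$ for singular $\lambda$ with $\theta=\chi=\cf(\lambda)$. I would therefore plan to state the fact, indicate the walks-on-ordinals framework as the unifying engine, and provide the bibliographic pointers rather than reproducing the proofs.
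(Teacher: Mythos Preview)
Your plan is essentially the paper's: the statement is a compendium of known results, and the ``proof'' consists of bibliographic pointers rather than a reproduction of the walks-on-ordinals machinery. In that sense you are on target.

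That said, your attributions diverge from the paper's in a few places worth noting. For Clause~(1) the paper cites Lemma~1.0 of Todor\v{c}evi\'c's \emph{Partition Problems in Topology} \cite{MR980949}, not \cite{MR908147}. For Clauses~(2) and~(3) the paper cites Rinot's own papers \cite{rinot18} and \cite{rinot15} respectively; these are not in \cite{MR908147}, and in particular the $\square(\kappa)\Rightarrow\pr_1(\kappa,\kappa,\kappa,\chi)$ result with $\chi$ this large is more recent than you suggest. For Clause~(5) the paper points to Eisworth \cite{MR3087059} rather than \emph{Cardinal Arithmetic}, and for Clause~(6) to \cite{rinot13}. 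Most notably, the paper does not cite anything for Clause~(4) at all: it simply observes that a successor of a regular cardinal automatically admits a non-reflecting stationary set (namely $E^{\lambda^+}_{\lambda}$), so Clause~(4) is a special case of Clause~(3). You might incorporate that reduction rather than invoking Shelah separately.
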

\begin{proof} \begin{enumerate}
\item By Lemma 1.0 of \cite[$\S1$]{MR980949}.
\item By Theorem~B of \cite{rinot18}.
\item By Corollary~3.2 of \cite{rinot15}.
\item By Clause~(\ref{nonreflecting}) above.
\item By Corollary~6.2 of \cite{MR3087059}.
\item By Corollary~3.3 of \cite{rinot13}.
\item By Conclusion~4.1 of \cite{Sh:355}. \qedhere
\end{enumerate}
\end{proof}

It follows that $\axiomi(\lambda^+,\lambda^+,\omega)$ holds for every regular cardinal $\lambda\ge\omega_1$.
Now, what about $\axiomi(\omega_1,\omega_1,\omega)$?

$\br$  Galvin proved \cite{MR0585558} that  the failure of $\pr_1(\omega_1,\omega_1,\omega_1,\omega)$ is consistent with $\zfc$,
and hence Lemma~\ref{pr1lemma} is inapplicable here.

$\br$ Getting just  $\axiom(\omega_1,\omega_1)$ turns out to be ready-made;
It follows from Theorem 2.6 of \cite{rinot15} that  $\axiom(\mu^+,\mu^+)$ holds for every infinite cardinal $\mu$ satisfying $\mu^{<\mu}=\mu$.

Altogether, there is a need for a dedicated proof of $\axiomi(\omega_1,\omega_1,\omega)$. This is our next task.

\begin{theorem}\label{lowertrace}  $\axiomi(\omega_1,\omega_1,\omega)$ holds.
\end{theorem}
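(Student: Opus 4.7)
The challenge is that Lemma~\ref{pr1lemma} is unavailable here, since $\pr_1(\omega_1,\omega_1,\omega_1,\omega)$ may consistently fail (Galvin). So a direct construction exploiting Todor\v{c}evi\'c's minimal walks seems necessary---consistent with the theorem's label ``\texttt{lowertrace}''.

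\emph{Setup and construction.} Fix a $C$-sequence $\langle C_\alpha\mid\alpha<\omega_1\rangle$ with $\otp(C_\alpha)\le\omega$ and $C_\alpha$ cofinal in $\alpha$, and let $\rho_1$, $\mathrm{Tr}$, $F$ be the associated subadditive walk function, trace, and full lower trace. Construct an auxiliary $c:[\omega_1]^2\to\omega_1$ witnessing the $\zfc$-provable rectangular relation $\omega_1\nrightarrow[\omega_1;\omega_1]^2_{\omega_1}$, e.g.\ via a standard oscillation coloring built from $\vec C$. Now define $d:[\omega_1]^{<\omega}\to\omega_1$ by extracting from each finite $z$ with $|z|\ge 2$ a canonical pair $(\alpha_z,\beta_z)$ using the walk from $\max(z)$, and setting $d(z):=c(\alpha_z,\beta_z)$. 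A natural candidate is to take $\beta_z:=\max(z)$ and let $\alpha_z$ be the least element of $z$ surviving a filtering against the walk (i.e.\ outside $F(\min(z),\beta_z)\cup\mathrm{Tr}(\min(z),\beta_z)$), so that $\alpha_z$ is insensitive to adding or deleting from $z$ any ordinals that lie on that walk.

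\emph{Verification.} Given $\mathcal X_0,\dots,\mathcal X_{n-1}\in[[\omega_1]^{<\omega}]^{\omega_1}$ and $\delta<\omega_1$, first apply the $\Delta$-system lemma and head-tail-tail thinning to each $\mathcal X_i$, producing roots $r_0,\dots,r_{n-1}$. By pigeon-hole over the finitely many root positions and tail patterns, arrange that for the selected $x_i\in\mathcal X_i$ the roots lie below a fixed $\eta\supseteq\bigcup_i r_i$, the tails lie far above $\eta$, and the tails of $x_i,x_{i+1}$ stack strictly. Using subadditivity of $\rho_1$ and the stabilization of $F$ on clubs, thin further so that $(\alpha_z,\beta_z)$ depends only on the tails of the chosen $x_i$'s, not on which elements of $\bigcup_{i\ne j}(r_i\cap r_j)$ happen to appear in an admissible $z$. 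Finally invoke the rectangular property of $c$ on the two $\omega_1$-sized families $\{\alpha_z\mid x_0\in\mathcal X_0\}$ and $\{\beta_z\mid x_{n-1}\in\mathcal X_{n-1}\}$ to realize $\delta$, choosing intermediate $x_i$'s consistent with the stacking constraint.

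\emph{Main obstacle.} The central difficulty is the middle step of the verification: ensuring $(\alpha_z,\beta_z)$ depends only on the selected tails, given that admissible $z$'s can differ from $\bigcup_i x_i$ by arbitrary subsets of $\bigcup_{i\ne j}(r_i\cap r_j)$, whose ordinals can sit anywhere relative to the tails. A naïve $\min$/$\max$-based extraction would be destabilized by these perturbations; the full lower trace $F$ is precisely the device that allows one to cleanse $(\alpha_z,\beta_z)$ of root-level influence---by prior thinning, the only potentially ``dangerous'' ordinals (those lying on a tail-level walk) are exactly the ordinals that the filtering in the definition of $\alpha_z$ will ignore.
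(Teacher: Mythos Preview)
Your plan has a genuine gap in its final step. You propose to feed a black-box rectangular coloring $c$ the two families $\{\alpha_z\mid x_0\in\mathcal X_0\}$ and $\{\beta_z\mid x_{n-1}\in\mathcal X_{n-1}\}$. But by your own definition, $\alpha_z$ is obtained by filtering $z$ against the walk from $\beta_z=\max(z)$; hence $\alpha_z$ depends on the choice of $x_{n-1}$, not just on $x_0$. The first family is thus not well-defined, and more importantly the map $(x_0,\dots,x_{n-1})\mapsto(\alpha_z,\beta_z)$ does not factor through a product of two $\omega_1$-sized sets, which is exactly what a rectangular argument needs. No amount of prior thinning via $F$ or $\mathrm{Tr}$ fixes this: different top ordinals $\beta_z$ yield different walks, so the ``surviving least element'' will vary with $x_{n-1}$. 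The lower-trace machinery addresses root-perturbation robustness (your correctly identified obstacle), but it does not decouple the two coordinates of the extracted pair.

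The paper avoids this coupling by using not a black-box $c$ but Moore's oscillation map $\osc:[\omega_1]^2\to\omega$ from his $L$-space paper, which has a property strictly stronger than rectangularity: for any uncountable pairwise-disjoint $\mathcal A\subseteq[\omega_1]^k$, $\mathcal B\subseteq[\omega_1]^l$ and any $s<\omega$, there are $a\in\mathcal A$ and a sequence $\langle b_m\mid m<s\rangle$ in $\mathcal B$ with $\osc(a(i),b_m(j))=\osc(a(i),b_0(j))+m$. One defines $d(z)$ by first selecting a pair $(\alpha,\beta)\in[z]^2$ maximizing $\osc$ (this is the root-robustness device, analogous to your filtering), reading off the $2$-adic valuation $\iota$ of $\osc(\alpha,\beta)$, and outputting $\psi_\alpha(f_\iota(\alpha))$, where $\{f_\iota\mid\iota<\omega\}$ is countable dense in $\omega^{\omega_1}$ and $\psi_\alpha:\omega\to\alpha$ is onto. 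In the verification one thins so that $\psi_\alpha(f_\iota(\alpha))=\delta$ for a fixed $\iota$ and all $\alpha$ in the tail of each $x_0^\gamma$, invokes Moore's property with $\mathcal A$ the tails of the $x_0$'s and $\mathcal B$ the union of the remaining tails, and then \emph{chooses} $m$ afterward so that the maximal oscillation has $2$-adic valuation exactly $\iota$. The target color is thus encoded by a parameter one selects after seeing both sides, rather than by hoping an uncontrolled extracted pair lands in the right fiber of a rectangular $c$.
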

\begin{proof}
As the product of $\mathfrak c$ many separable topological spaces is again separable,
let us pick a countable dense subset $\{f_\iota\mid \iota<\omega\}$ of the product space $\omega^{\omega_1}$.
Notice that this means that for every finite subset $a\s\omega_1$ and every function $f:a\rightarrow\omega$, there exists some $\iota<\omega$ such that $f_\iota\restriction a=f$.

Next, by Theorem~1.5 of \cite{MR2220104}, let us pick a function $\osc:[\omega_1]^2\rightarrow\omega$ satisfying that
for every positive integers $k,l$, every uncountable families $\mathcal A\s[\omega_1]^k$ and $\mathcal B\s[\omega_1]^l$,
each consisting of pairwise disjoint sets, and every $s<\omega$, there exist $a\in\mathcal A$ and a sequence $\langle b_m\mid m<s\rangle$ of elements in $\mathcal B$
such that for all $m<s$:
\begin{itemize}
\item $\max(a)<\min(b_m)$, and
\item $\osc(a(i),b_m(j))=\osc(a(i),b_0(j))+m$ for all $i<k$ and $j<l$.\footnote{Here, $a(i)$ stands for the unique $\alpha\in a$ to satisfy $|a\cap\alpha|=i$. The interpretation of $b_m(j)$ is similar.}
\end{itemize}

For every nonzero $\alpha<\omega_1$, fix a surjection $\psi_\alpha:\omega\rightarrow\alpha$.
Finally, define the function  $d:[\omega_1]^{<\omega}\rightarrow\omega_1$ as follows.
Let $z\in[\omega_1]^{<\omega}$ be arbitrary.
If $M_z:=\{ \pair{\alpha,\beta}\in[z]^2\mid \osc(\alpha,\beta)=\sup(\osc``[z]^2)\}$ is empty, then let $d(z):=\emptyset$.
Otherwise, pick an arbitrary $\pair{\alpha,\beta}$ from $M_z$,
let $\iota$ be the maximal natural number to satisfy that $2^\iota$ divides $\osc(\alpha,\beta)$,
and then put $d(z):=\psi_\alpha(f_\iota(\alpha))$.

To see that $d$ works, suppose we are given a sequence $\langle \mathcal X_i \mid i<n\rangle\in\prod_{i<n}[[\omega_1]^{<\omega}]^{\omega_1}$,
for some integer  $n\ge2$, along with some prescribed colour $\delta<\omega_1$.
As in the proof of Lemma~\ref{pr1lemma},
we may find a matrix $\langle x^\gamma_i \mid \gamma<\omega_1, i<n \rangle$ and a sequence $\langle r_i\mid i<n\rangle$ such that for all $\gamma<\gamma'<\omega_1$ and $i<j<n$:
\begin{itemize}
\item $r_i\uplus x^\gamma_i \in\mathcal X_i$;
\item $\sup(r_0\cup\cdots\cup r_{n-1})<\min(x^\gamma_i)\le\max(x^\gamma_i)<\min(x^\gamma_j)\le\max(x_j^\gamma)<\min(x_0^{\gamma'})$.
\end{itemize}

For all $\gamma<\omega_1$, denote $a^\gamma:=x_0^\gamma$ and $b^\gamma:=x_1^\gamma\uplus\cdots\uplus x_{n-1}^\gamma$.
By the pigeonhole principle, let us fix an uncountable $\Gamma\s\omega_1$ along with $k,l,\iota,\epsilon<\omega$ such that for all $\gamma\in\Gamma$:
\begin{itemize}
\item $|a^\gamma|=k$ and $|b^\gamma|=l$;
\item $\psi_\alpha(f_\iota(\alpha))=\delta$ for all $\alpha\in a^\gamma$;
\item $\max(\osc``[r_0\cup\cdots\cup r_{n-1}\cup a^\gamma\cup b^\gamma]^2)=\epsilon$.
\end{itemize}

Put $s:=\epsilon+1+2^\iota+1$. Consider $\mathcal A:=\{ a^\gamma\mid \gamma\in\Gamma\}$ and $\mathcal B:=\{b^\gamma\mid \gamma\in\Gamma\}$.
As $\mathcal A\s[\omega_1]^k$ and $\mathcal B\s[\omega_1]^l$ are uncountable families, each consisting of pairwise disjoint sets,
we may pick $a\in\mathcal A$ and a sequence $\langle b_m\mid m<s\rangle$ of elements in $\mathcal B$
such that for all $m<s$:
\begin{itemize}
\item $\max(a)<\min(b_m)$, and
\item $\osc(a(i),b_m(j))=\osc(a(i),b_0(j))+m$ for all $i<k$ and $j<l$.
\end{itemize}

Write $\mathfrak m:=\max(\osc([a\times b_0]))$.
Let $t$ be the unique natural number to satisfy
$0\le t<2^\iota$ and $\mathfrak m+\epsilon+1\equiv t\pmod{2^\iota}$.
Put $m:=\epsilon+1+2^\iota-t$. Then $m<s$ and $\iota$ is the maximal natural number to satisfy that $2^\iota$ divides $\mathfrak m+m$.

Fix $\gamma<\gamma'<\omega_1$ such that $a=a^\gamma$ and $b_m=b^{\gamma'}$.
Write $\bar x_0:=r_0\uplus x_0^\gamma$,
and for all nonzero $i<n$, write $\bar x_i:=r_i\uplus x_i^{\gamma'}$.
Clearly, $\langle \bar x_i\mid i<n\rangle\in\prod_{i<n} \mathcal X_i$, and $\langle \sup(\bar x_i)\mid i<n\rangle$ is strictly increasing.
Next, suppose that we are given $z$ satisfying
$$\bigcup_{i<n}\left(\bar x_i\setminus\bigcup_{j\in n\setminus\{i\}}\bar x_j\right)\s z\s \bigcup_{i<n}\bar x_i.$$

\begin{claim} $M_z$ is a nonempty subset of $a\times b_m$,
and $\osc(\alpha,\beta)=\mathfrak m+m$ for all $\pair{\alpha,\beta}\in M_z$.
\end{claim}
\begin{proof} Let $\pair{\alpha,\beta}\in [z]^2$ be arbitrary.
As $\bigcup_{i<n}\bar x_i=\left(\bigcup_{i<n} r_i\right)\uplus a^\gamma\uplus b^{\gamma'}$,
we consider the following cases:
\begin{enumerate}
\item Suppose that $\alpha\in \left(\bigcup_{i<n} r_i\right)$.

By $\beta\in(r_0\cup\cdots\cup r_{n-1}\cup a^\gamma\cup b^{\gamma'})$, we have $\osc(\alpha,\beta)\le\epsilon$.
\item Suppose that $\alpha\in a^\gamma$.
\begin{enumerate}
\item If $\beta\in(r_0\cup\cdots\cup r_{n-1}\cup a^\gamma)$, then $\osc(\alpha,\beta)\le\epsilon$;
\item If $\beta\in b^{\gamma'}$, then $\pair{\alpha,\beta}\in a\times b_m$,
so that writing $i:=a\cap\alpha$ and $j:=b_m\cap\beta$,
we have $\osc(\alpha,\beta)=\osc(a(i),b_m(j))=\osc(a(i),b_0(j))+m\ge m>\epsilon$.

\end{enumerate}
\item Suppose that $\alpha\in b^{\gamma'}$.
\begin{enumerate}
\item If $\beta\in a^\gamma$, then $\alpha>\beta$ which gives a contradiction to $\pair{\alpha,\beta}\in[z]^2$;
\item If $\beta\in r_0\cup\cdots\cup r_{n-1}\cup b^{\gamma'}$, then $\osc(\alpha,\beta)\le\epsilon$.
\end{enumerate}
\end{enumerate}

Finally, by $$z\supseteq
\bigcup_{i<n}\left(\bar x_i\setminus\bigcup_{j\in n\setminus\{i\}}\bar x_j\right)\supseteq
\left(\bigcup_{i<n}\bar x_i\setminus\bigcup_{i<n}r_i\right),$$
we have $a^\gamma\times b^{\gamma'}\s [z]^2$, so that case (2)(b)
is indeed feasible, and $M_z$ is a nonempty subset of $a^\gamma\times b^{\gamma'}=a\times b_m$.
It follows that  $M_z=\{ \pair{\alpha,\beta}\in a\times b_m\mid \osc(\alpha,\beta)=\mathfrak m+m\}$.
\end{proof}

Let $\pair{\alpha,\beta}\in M_z$ be arbitrary. By the choice of $m$, we know that $\iota$ is the maximal natural number to satisfy that $2^\iota$ divides $\mathfrak m+m$,
and hence $d(z)=\psi_\alpha(f_\iota(\alpha))=\delta$, as sought.
\end{proof}

\begin{corollary}\label{c36} Suppose that $\kappa$ is a regular uncountable cardinal that admits a nonreflecting stationary set (e.g., $\kappa$ is the successor of an infinite regular cardinal). Then:
\begin{itemize}
\item $\axiomi(\kappa,\kappa,\omega)$ holds. In particular:
\item There exists a colouring $d:[\kappa]^{<\omega}\rightarrow\kappa$
such that for every $\mathcal X\subseteq[\kappa]^{<\omega}$ of size $\kappa$ and every
colour $\delta<\kappa$, there exist two distinct $x,y\in \mathcal X$
satisfying $d(x\cup y)=\delta$.
\end{itemize}
\end{corollary}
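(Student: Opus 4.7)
The plan is to dichotomize on whether $\kappa=\omega_1$ or $\kappa>\omega_1$, aiming in both cases at $\axiomi(\kappa,\kappa,\omega)$. The ``in particular'' clause then follows by instantiating the definition of $\axiomi$: given $\mathcal X\in[[\kappa]^{<\omega}]^\kappa$, split it into two $\kappa$-sized pieces $\mathcal X_0,\mathcal X_1$, apply $\axiomi(\kappa,\kappa,\omega)$ with $n=2$ to obtain $x_0\in\mathcal X_0$, $x_1\in\mathcal X_1$ with $\sup(x_0)<\sup(x_1)$ (which forces $x_0\ne x_1$) such that $d(z)=\delta$ for every $z$ with $(x_0\symdiff x_1)\subseteq z\subseteq(x_0\cup x_1)$, and then take $z:=x_0\cup x_1$.

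For $\kappa>\omega_1$, I would route through the $\pr_1$ principle. A nonreflecting stationary subset of $\kappa$ has stationary intersection with $E^\kappa_{\ge\omega}$ (the set of non-limit ordinals being non-stationary), and that intersection is itself nonreflecting, so $E^\kappa_{\ge\omega}$ admits a nonreflecting stationary set. Since $\kappa>\omega_1=\omega^+$, Fact~\ref{factpr1}(\ref{nonreflecting}) with $\chi=\omega$ yields $\pr_1(\kappa,\kappa,\kappa,\omega)$. The hypothesis $\mu^{<\omega}=\mu<\kappa$ for every infinite $\mu<\kappa$ is automatic, so Lemma~\ref{pr1lemma} lifts this to $\axiomi(\kappa,\kappa,\omega)$, as desired.

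The remaining case $\kappa=\omega_1$ cannot be handled in the same way: Galvin's result (mentioned in the bulleted remarks preceding Theorem~\ref{lowertrace}) shows that $\pr_1(\omega_1,\omega_1,\omega_1,\omega)$ may consistently fail, so Lemma~\ref{pr1lemma} is simply unavailable. This is precisely the obstacle overcome by Theorem~\ref{lowertrace}, whose oscillation-based construction produces a witness to $\axiomi(\omega_1,\omega_1,\omega)$ directly in \zfc. Thus the $\kappa=\omega_1$ case is settled by a one-line appeal to Theorem~\ref{lowertrace}, and the only genuinely hard step in the whole corollary is already behind us.
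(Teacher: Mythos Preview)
Your proof is correct and follows exactly the paper's approach: Theorem~\ref{lowertrace} for $\kappa=\omega_1$, and Fact~\ref{factpr1}(\ref{nonreflecting}) combined with Lemma~\ref{pr1lemma} for $\kappa>\omega_1$. Your added justifications (why a nonreflecting stationary set in $\kappa$ yields one in $E^\kappa_{\ge\omega}$, why the cardinal-arithmetic hypothesis of Lemma~\ref{pr1lemma} is automatic for $\chi=\omega$, and how the ``in particular'' clause follows by splitting $\mathcal X$ and taking $z=x_0\cup x_1$) are all sound elaborations the paper leaves implicit.
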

\begin{proof}
For $\kappa=\aleph_1$, use Theorem~\ref{lowertrace}.
For $\kappa>\aleph_1$, use Lemma~\ref{pr1lemma} together with Fact~\ref{factpr1}(\ref{nonreflecting}).
\end{proof}

Note that the second bullet of the preceding generalizes the celebrated result from \cite[p.~285]{MR908147} asserting that $\kappa\nrightarrow[\kappa]^2_\kappa$ holds
for every regular uncountable cardinal $\kappa$ that admits a nonreflecting stationary set.

\medskip

As colouring of the real line is of a special interest, and as the results so far only shed a limited amount of light on cardinals of the form $2^\lambda$,
our next task is proving the following.

\begin{theorem}\label{power} Suppose that $\lambda$ is an infinite cardinal satisfying  $2^{<\lambda}=\lambda$. Then $\axiom(\cf(2^\lambda),\omega)$ holds.
\end{theorem}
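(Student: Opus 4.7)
Let $\kappa:=\cf(2^\lambda)$. By König's theorem, $\lambda<\kappa\le 2^\lambda$, so I can fix an injection $\alpha\mapsto a_\alpha$ from $\kappa$ into ${}^\lambda 2$, and for $\alpha\neq\beta$ in $\kappa$ define the separation ordinal
\begin{equation*}
\Delta(\alpha,\beta):=\min\{\xi<\lambda\mid a_\alpha(\xi)\neq a_\beta(\xi)\}.
\end{equation*}
The hypothesis $2^{<\lambda}=\lambda$ enters through the crucial observation that $|{}^{<\lambda}2|=\lambda<\kappa$, so for every $\kappa$-sized $A\s\kappa$ and every $\eta<\lambda$ there are distinct $\alpha,\beta\in A$ with $a_\alpha\restriction\eta=a_\beta\restriction\eta$, i.e., $\Delta(\alpha,\beta)>\eta$. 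In particular, $\Delta$ takes a cofinal-in-$\lambda$ range on any $\kappa$-sized subset of $\kappa$.

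The plan is to define $d:[\kappa]^{<\omega}\to\omega$ by reading an oscillation-type quantity off of the ``extremal pair'' of $z$, rather than from $|z|$ or $\max(z)$ (simple invariants of these types do not suffice, as straightforward pigeonhole arguments show). Concretely, fix an oscillation function $\mathrm{o}:[\kappa]^2\to\omega$ built from the sequences $\langle a_\alpha\mid\alpha<\kappa\rangle$ in the spirit of Moore's oscillation (Theorem~\ref{lowertrace}), with the key property that for every $\kappa$-sized $\mathcal A\s[\kappa]^{<\omega}$ consisting of pairwise disjoint finite sets and every $\delta<\omega$, there exist $a\in\mathcal A$ and many $b\in\mathcal A$ above $a$ realising $\mathrm{o}$-values in any prescribed arithmetic progression. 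For $z\in[\kappa]^{<\omega}$ with $|z|\ge2$, let $(\alpha^*_z,\beta^*_z)$ be the $<$-least pair in $[z]^2$ achieving $\Delta_z:=\max\{\Delta(\alpha,\beta)\mid\{\alpha,\beta\}\in[z]^2\}$, and set $d(z):=\mathrm{o}(\alpha^*_z,\beta^*_z)$, with $d(z):=0$ for $|z|<2$.

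Given a $\kappa$-sized family $\mathcal X\s[\kappa]^{<\omega}$ and a target colour $\delta<\omega$, first thin $\mathcal X$ to a head-tail-tail $\Delta$-system $\mathcal X'=\{r\uplus s_x\mid x\in\mathcal X'\}$ of uniform tail-size $n$. Then, since $\lambda<\kappa$, pigeonhole finitely many times to assume that $\Delta(\alpha,\beta)$ is constant (and bounded by a fixed $\bar\eta<\lambda$) for every pair $(\alpha,\beta)$ whose pattern involves at least one element of $r$ or both within a single tail at fixed positions. This reduces the location of the maximum $\Delta$ in any $z\in[\kappa]^{<\omega}$ with $x\symdiff y\s z\s x\cup y$ (for $x,y\in\mathcal X'$) to a \emph{cross-tail} pair $(s_x(i),s_y(j))$—in particular to a pair lying entirely in $x\symdiff y$, hence present in every such $z$, so $\Delta_z$ and the chosen extremal pair $(\alpha^*_z,\beta^*_z)$ are invariant under the modifications $z\mapsto (x\symdiff y)\cup r'$ for $r'\s r$. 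Now use the oscillation property of $\mathrm{o}$ on the $\kappa$-sized disjoint family $\{s_x:x\in\mathcal X'\}$ to find $x\neq y$ with cross-tail extremal pair yielding $\mathrm{o}$-value $\delta$ while dominating $\bar\eta$; this gives $d(z)=\delta$ uniformly on $[x\symdiff y,x\cup y]$.

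\textbf{Main obstacle.} The delicate step is the construction and application of the oscillation function $\mathrm{o}$ at the cardinal $\kappa=\cf(2^\lambda)$. A naive choice $d(z):=g(\Delta_z)$ with $g:\lambda\to\omega$ surjective is doomed: one can always exhibit a $\kappa$-sized $\mathcal X$ whose cross-tail $\Delta$-range, though cofinal, misses $g^{-1}(\delta)$ for some $\delta$. Instead, $\mathrm{o}$ must be built using the dense family ${}^{<\lambda}2$ of size $\lambda$ (provided by the hypothesis $2^{<\lambda}=\lambda$) together with an oscillation-type counting on the XOR-patterns of the $a_\alpha$'s, in direct analogy with the combinatorial heart of the proof of Theorem~\ref{lowertrace}; the hypothesis $2^{<\lambda}=\lambda$ is precisely what allows a Moore-type oscillation to be constructed and to exhibit arbitrarily long arithmetic progressions of values on suitably chosen pairs.
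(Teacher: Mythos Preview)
Your overall architecture matches the paper's: encode $\kappa:=\cf(2^\lambda)$ into ${}^\lambda 2$ via $\alpha\mapsto a_\alpha$ (the paper writes $g_\alpha$), define $\Delta(\alpha,\beta)$ as the first disagreement, and let $d(z)$ be a $2$-colouring $c:[\kappa]^2\to\omega$ evaluated at a pair in $[z]^2$ maximising $\Delta$. The head-tail-tail reduction and the pigeonholing on $\Delta$-values within a single $r\cup s_x$ also mirror the paper's argument.

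The genuine gap is your construction of $\mathrm{o}$. You invoke ``a Moore-type oscillation'' at $\kappa$, but Moore's oscillation (the function used in Theorem~\ref{lowertrace}) is a construction on $\omega_1$ built from $C$-sequences and the $L$-space machinery; there is no known analogue delivering the arithmetic-progression behaviour you demand at an arbitrary $\kappa=\cf(2^\lambda)$, and the hypothesis $2^{<\lambda}=\lambda$ does not by itself furnish one. The heart of your proof is thus a black box you have not opened, and your ``Main obstacle'' paragraph acknowledges as much without resolving it.

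The paper's solution does \emph{not} use oscillation. From $2^{<\lambda}=\lambda$ one gets (via Sierpi\'nski and then Todor\v{c}evi\'c/Bonnet--Shelah) a $\kappa$-entangled linear order $(L,\lhd)$. Fixing injections $l_\alpha:\omega\to L$ with disjoint images, the paper sets $c(\alpha,\beta)$ to be the least $\tau$ with $l_\alpha(\tau)\lhd l_\beta(\tau)$. The verification then requires an additional idea absent from your sketch: after pigeonholing to stabilise $g_\alpha\restriction(\epsilon+1)$ along each tail, one shows that the $\Delta$-maximal pairs in any admissible $z$ are precisely the \emph{diagonal} cross-tail pairs $(a_\gamma(j),a_{\gamma'}(j))$, $j<m$. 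Entangledness is applied to a single family of order-preserving maps $f_\gamma:m(\delta+1)\to L$ so as to force $c(a_\gamma(j),a_{\gamma'}(j))=\delta$ for \emph{all} $j<m$ simultaneously---which is necessary, since $d(z)$ is defined via an \emph{arbitrary} element of $M_z$. Your plan to hit only the $<$-least extremal pair would instead force you to control which cross-tail pair wins the $\Delta$-maximum, an extra layer you have not addressed.
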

\begin{proof}
By $2^{<\lambda}=\lambda$ and a classic theorem of Sierpi\'nski, there exists a linear ordering of size $2^\lambda$ with a dense subset of size $\lambda$.
Then, by Theorem~3 of \cite{MR799818} (independently, also by the main result of \cite{MR776283}),
we may fix a linear order $(L,\lhd)$ of size $\kappa:=\cf(2^\lambda)$ which is $\kappa$-entangled.
The latter means that for every  $\mu<\omega$, every $\Omega\s\mu$,
and every injective sequence $\langle f_\alpha:\mu\rightarrow L\mid \alpha<\kappa\rangle$ of order-preserving maps from $(\mu,\in)$ to $(L,\lhd)$, with pairwise disjoint images,
there exist $\alpha<\beta<\kappa$  such that for all $\iota<\mu$: $f_\alpha(\iota) \lhd f_\beta(\iota)$ iff $\iota\in\Omega$.%
\footnote{Note that the definition of a $(\kappa,\mu)$-entangled ordering in \cite{MR799818} only guarantees ``$\alpha\neq\beta$'',
however, ``$\alpha<\beta$'' can be ensured by appealing to the $(\kappa,2\mu)$-entangledness of the ordering.}

Fix a sequence of injections $\langle l_\alpha:\omega\rightarrow L\mid \alpha<\kappa\rangle$
such that  $\im(l_\alpha)\cap\im(l_\beta)=\emptyset$ for all $\alpha<\beta<\kappa$.
Define a colouring $c:[\kappa]^2\rightarrow\omega$ as follows. For all $\alpha<\beta<\kappa$,
if there exists some $\tau<\omega$ such that $l_\alpha(\tau)\lhd l_\beta(\tau)$, let $c(\alpha,\beta)$ be the least such $\tau$.
Otherwise, let $c(\alpha,\beta):=0$.

By $\kappa\le2^\lambda$, let $\langle g_\alpha:\lambda\rightarrow2\mid \alpha<\kappa\rangle$ be a sequence of pairwise distinct functions.
For all $\alpha<\beta<\kappa$, let $\Delta(\alpha,\beta):=\min\{\varepsilon<\lambda\mid g_\alpha(\varepsilon)\neq g_\beta(\varepsilon)\}$ .
Now, to define $d:[\kappa]^{<\omega}\rightarrow\omega$, let $z\in[\kappa]^{<\omega}$ be arbitrary.
If $M_z:=\{\pair{\alpha,\beta}\in[z]^2\mid \Delta(\alpha,\beta)=\sup(\Delta``[z]^2) \}$
is nonempty, then let $d(z):=c(\alpha,\beta)$ for an arbitrary choice of $\pair{\alpha,\beta}$ from $M_z$.
Otherwise, let $d(z):=0$.

To see that $d$ witnesses $\axiom(\kappa,\omega)$, suppose that we are given a $\kappa$-sized family $\mathcal X\s[\kappa]^{<\omega}$ and a prescribed colour $\delta<\omega$.
By the $\Delta$-system lemma, we may find a sequence $\langle a_\gamma \mid \gamma<\kappa\rangle$ along with $r\in[\kappa]^{<\omega}$ and $m<\omega$ such that for all $\gamma<\gamma'<\kappa$:
\begin{itemize}
\item $|a_\gamma|=m$;
\item $r\uplus a_\gamma \in\mathcal X$;
\item $\sup(r)<\min(a_\gamma)\le\max(a_\gamma)<\min(a_{\gamma'})$.
\end{itemize}

For each $\gamma<\kappa$, let $f_\gamma:m(\delta+1)\rightarrow L$ denote the unique order-preserving map from $(m(\delta+1),\in)$ to $(L,\lhd)$
such that $\im(f_\gamma)=\{l_\alpha(\tau)\mid \alpha\in a_\gamma, \tau\le\delta\}$.
Also, fix some enumeration $\{ a_\gamma(j)\mid j<m\}$ of $a_\gamma$.

Next, by an iterated application of the pigeonhole principle, let us fix $\Gamma\in[\kappa]^\kappa$ together with
 $\epsilon<\lambda$, $t:m\rightarrow{}^{(\epsilon+1)}2$ and $h:m\times(\delta+1)\leftrightarrow m (\delta+1)$ such that for all $\gamma\in \Gamma$:
\begin{itemize}
\item $\max(\Delta``[r\uplus a_\gamma]^2)=\epsilon$;
\item $\langle g_{a_\gamma(j)}\restriction (\epsilon+1)\mid j<m\rangle=t$;\footnote{Note that $|{}^m({}^{(\epsilon+1)}2)|\le 2^{<\lambda}=\lambda<\kappa$.}
\item $f_\gamma(h(j,\tau))=l_{a_\gamma(j)}(\tau)$ for all $j<m$ and $\tau\le\delta$.
\end{itemize}

Put $\Omega:=h[m\times\{\delta\}]$.
As $(L,\lhd)$ is $\kappa$-entangled, let us pick $\gamma<\gamma'$ both from $\Gamma$ such that for all $\iota<m(\delta+1)$: $f_\gamma(\iota) \lhd f_{\gamma'}(\iota)$ iff $\iota\in\Omega$.
Write $x:=r\uplus a_\gamma$ and $y:=r\uplus a_{\gamma'}$.
Clearly, $x,y$ are two distinct elements of $\mathcal X$.
Next, suppose that we are given $z$ satisfying $x\symdiff y\s z \s x\cup y$.

\begin{claim} $M_z$ is a nonempty subset of $\{\pair{a_\gamma(j),a_{\gamma'}(j)}\mid j<m\}$.
\end{claim}
\begin{proof} Let $\pair{\alpha,\beta}\in[z]^2$ be arbitrary.
As $x\cup y=r\uplus a_\gamma\uplus a_{\gamma'}$, we consider the following cases:
\begin{enumerate}
\item Suppose that $\alpha\in r$.

By $\beta\in r\uplus a_\gamma\uplus a_{\gamma'}$, we have $\Delta(\alpha,\beta)\le\epsilon$.
\item Suppose that $\alpha\in a_\gamma$.
\begin{enumerate}
\item If $\beta\in(r\uplus a_\gamma)$, then $\Delta(\alpha,\beta)\le\epsilon$;
\item If $\beta\in a_{\gamma'}$, then let $j_\alpha,j_\beta<m$ be such that $\alpha=a_\gamma(j_\alpha)$ and $\beta=a_{\gamma'}(j_\beta)$.
There are two cases to consider:
\begin{enumerate}
\item If $j_\alpha=j_\beta$, then $g_\alpha\restriction(\epsilon+1)=g_{a_\gamma(j_\alpha)}\restriction(\epsilon+1)=t(j_\alpha)=t(j_\beta)=g_{a_{\gamma'}(j_\beta)}\restriction(\epsilon+1)=g_\beta\restriction(\epsilon+1)$,
so that $\Delta(\alpha,\beta)>\epsilon$;
\item If $j_\alpha\neq j_\beta$, then $g_\alpha\restriction(\epsilon+1)=t(j_\alpha)=g_{a_{\gamma'}(j_\alpha)}\restriction(\epsilon+1)$, and hence
$$\Delta(\alpha,a_{\gamma'}(j_\alpha))>\epsilon=\max(\Delta``[a_{\gamma'}]^2)\ge\Delta(a_{\gamma'}(j_\alpha),\beta),$$
so that $\Delta(\alpha,\beta)\le\epsilon$.
\end{enumerate}
\end{enumerate}
\item Suppose that $\alpha\in a_{\gamma'}$.
\begin{enumerate}
\item If $\beta\in a_\gamma$, then $\alpha>\beta$ which gives a contradiction to $\pair{\alpha,\beta}\in[z]^2$;
\item If $\beta\in r\uplus a_{\gamma'}$, then $\Delta(\alpha,\beta)\le\epsilon$.
\end{enumerate}
\end{enumerate}

Finally, by $z\supseteq (x\symdiff y)=a_\gamma\uplus a_{\gamma'}$,
we have $a_\gamma\times a_{\gamma'}\s [z]^2$, so that case (2)(b)(i)
is indeed feasible.
\end{proof}

As $\Omega=h[m\times\{\delta\}]$, we have $f_\gamma(\iota) \lhd f_{\gamma'}(\iota)$ iff $\iota\in h[m\times\{\delta\}]$.
Let $j<m$ be such that $d(z)=c(a_\gamma(j),a_{\gamma'}(j))$.
Then, for all $\tau\le\delta$:
$f_\gamma(h(j,\tau))\lhd f_{\gamma'}(h(j,\tau))$ iff $h(j,\tau)\in h[m\times\{\delta\}]$ iff $\tau=\delta$.
That is, for all $\tau\le\delta$:
$l_{a_\gamma(j)}(\tau)\lhd l_{a_{\gamma'}(j)}(\tau)$ iff $\tau=\delta$.
Recalling the definition of $c$, we altogether infer that $d(z)=c(a_\gamma(j),a_{\gamma'}(j))=\delta$, as sought.
\end{proof}

\begin{corollary}\label{alphasuccessor} For every successor ordinal $\alpha$:
\begin{enumerate}
\item $\axiom(\beth_\alpha,\omega)$ and $\axiomi(\aleph_\alpha,\cf(\aleph_{\alpha-1}),\omega)$ hold;\footnote{Note that if $\aleph_{\alpha-1}$ is regular, then $\axiom^*(\aleph_\alpha,\cf(\aleph_{\alpha-1}),\omega)$
is equivalent to $\axiom^*(\aleph_\alpha,\aleph_\alpha,\omega)$.}
\item if $\beth_\alpha=\aleph_\alpha$, then $\axiomi(\aleph_\alpha,\aleph_\alpha,\omega)$ holds.\footnote{Note that $\gch$ is equivalent to the assertion that $\beth_\alpha=\aleph_\alpha$ for all ordinals $\alpha$,
and that if $\zfc$ is consistent then so is $\zfc+\gch~+$ every regular uncountable cardinal is of the form $\aleph_\alpha$ for some successor ordinal $\alpha$.}
\end{enumerate}

For every limit ordinal $\alpha$:
\begin{enumerate}
\item[(3)] if $\cf(\alpha)$ is uncountable and admitting a nonreflecting stationary set, then $\axiomi(\aleph_\alpha,\cf(\alpha),\omega)$ holds;
\item[(4)] if $\cf(\alpha)$ is a successor of an infinite cardinal of cofinality $\theta$, then $\axiomi(\aleph_\alpha,\theta,\omega)$ holds.
\end{enumerate}
\end{corollary}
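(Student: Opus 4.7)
The plan is to reduce each clause to the machinery of Sections~\ref{omegacolours} and~\ref{infcombinatorics}: chiefly Theorem~\ref{power}, Fact~\ref{factpr1}, Lemma~\ref{pr1lemma}, Proposition~\ref{singulars}, Theorem~\ref{lowertrace}, and Corollary~\ref{c36}. Two recurring observations simplify matters: Lemma~\ref{pr1lemma}'s cardinal-arithmetic hypothesis $\mu^{<\chi}<\kappa$ is automatic when $\chi=\omega$ (since $\mu^{<\omega}=\mu$ for infinite $\mu$), and $\pr_1$ is monotone downward in both its third and fourth parameters; so, after invoking an item of Fact~\ref{factpr1}, one may freely weaken the number of colours as well as the size-bound on family elements before applying Lemma~\ref{pr1lemma}.

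For the second assertion of clause~(1), $\axiomi(\aleph_\alpha,\cf(\aleph_{\alpha-1}),\omega)$: if $\aleph_{\alpha-1}$ is regular, Fact~\ref{factpr1}(\ref{successor_of_regular}) supplies $\pr_1(\aleph_\alpha,\aleph_\alpha,\aleph_\alpha,\omega)$, from which the number of colours can be weakened to $\cf(\aleph_{\alpha-1})=\aleph_{\alpha-1}$ before Lemma~\ref{pr1lemma} is applied. If $\aleph_{\alpha-1}$ is singular, Fact~\ref{factpr1}(\ref{successor_of_singular}) supplies $\pr_1(\aleph_\alpha,\aleph_\alpha,\cf(\aleph_{\alpha-1}),\cf(\aleph_{\alpha-1}))$; weaken the last parameter to $\omega$ and apply Lemma~\ref{pr1lemma}. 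The boundary case $\alpha=1$ is handled by Theorem~\ref{lowertrace}. For the first assertion, $\axiom(\beth_\alpha,\omega)$, split according to whether $\beth_\alpha$ is a successor cardinal or a singular limit cardinal (in ZFC, successor $\alpha$ forces $\cf(\beth_\alpha)>\beth_{\alpha-1}>\omega$, ruling out regular limits). If $\beth_\alpha=\lambda^+$, repeat the previous argument with $\lambda$ in place of $\aleph_{\alpha-1}$ to obtain $\axiomi(\beth_\alpha,\cf(\lambda),\omega)$, then invoke Proposition~\ref{prop32}(3). If $\beth_\alpha$ is a singular limit cardinal, find $\lambda$ with $2^{<\lambda}=\lambda$ and $\cf(2^\lambda)=\cf(\beth_\alpha)$, apply Theorem~\ref{power}, and lift $\axiom(\cf(\beth_\alpha),\omega)$ to $\axiom(\beth_\alpha,\omega)$ via Proposition~\ref{singulars}(1). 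The choice $\lambda:=\beth_{\alpha-1}$ works when $\alpha-1$ is a limit ordinal or $0$; otherwise, $\lambda$ is found by iterating $\mu\mapsto 2^{<\mu}$ from $\beth_{\alpha-1}$ to a stable fixed point below $\beth_\alpha$, whose existence follows from $\cf(\beth_\alpha)>\omega$.

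For clause~(2), the assumption $\beth_\alpha=\aleph_\alpha$ forces GCH at every cardinal below $\aleph_{\alpha-1}$, so when $\aleph_{\alpha-1}$ is singular the computation $\pp(\aleph_{\alpha-1})=\aleph_\alpha$ (combining $\pp(\lambda)\le 2^\lambda$, $\pp(\lambda)\ge\lambda^+$, and $2^{\aleph_{\alpha-1}}=\aleph_\alpha$) is available. Fact~\ref{factpr1}(\ref{pp}) then supplies $\pr_1(\aleph_\alpha,\aleph_\alpha,\aleph_\alpha,\cf(\aleph_{\alpha-1}))$; weaken the last parameter to $\omega$ and apply Lemma~\ref{pr1lemma} to obtain $\axiomi(\aleph_\alpha,\aleph_\alpha,\omega)$. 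The regular subcase uses Fact~\ref{factpr1}(\ref{successor_of_regular}) directly. For clauses~(3) and~(4), $\alpha$ is a limit ordinal, $\aleph_\alpha$ is singular, and $\cf(\aleph_\alpha)=\cf(\alpha)$. Proposition~\ref{singulars}(2) reduces $\axiomi(\aleph_\alpha,\theta,\omega)$ to $\axiomi(\cf(\alpha),\theta,\omega)$. In clause~(3), Corollary~\ref{c36} applied to the regular uncountable $\cf(\alpha)$ (which admits a nonreflecting stationary set by hypothesis) supplies $\axiomi(\cf(\alpha),\cf(\alpha),\omega)$; weaken the number of colours via Proposition~\ref{prop32}(1). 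In clause~(4), $\cf(\alpha)=\mu^+$ is a successor cardinal with $\cf(\mu)=\theta$, so the argument of the second assertion of clause~(1) applied with $\mu$ in place of $\aleph_{\alpha-1}$ yields $\axiomi(\mu^+,\cf(\mu),\omega)=\axiomi(\mu^+,\theta,\omega)$.

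The main obstacle is the singular-limit subcase of clause~(1): when $\alpha-1$ is a successor ordinal, the continuum function below $\beth_{\alpha-1}$ can be erratic, and producing a $\lambda$ with $2^{<\lambda}=\lambda$ and $\cf(2^\lambda)=\cf(\beth_\alpha)$ requires iterating $\mu\mapsto 2^{<\mu}$ delicately to stay bounded below $\beth_\alpha$. A secondary subtlety is the $\pp$-computation in clause~(2), which relies on the GCH consequence $2^{\aleph_{\alpha-1}}=\aleph_\alpha$.
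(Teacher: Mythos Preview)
Your treatment of the second assertion in clause~(1) and of clauses~(2)--(4) follows the paper's proof closely: the paper uses Corollary~\ref{c36} (respectively Fact~\ref{factpr1}(\ref{successor_of_singular}) plus Lemma~\ref{pr1lemma}) for the regular (respectively singular) subcase of $\aleph_{\alpha-1}$, Fact~\ref{factpr1}(\ref{pp}) for clause~(2), and Proposition~\ref{singulars} to pass from $\cf(\alpha)$ to $\aleph_\alpha$ in clauses~(3) and~(4). Your side remark that $\beth_\alpha=\aleph_\alpha$ forces GCH below $\aleph_{\alpha-1}$ is not correct in general, but you never actually use it: what you need, and what does follow, is $2^{\aleph_{\alpha-1}}=\aleph_\alpha$.

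The divergence is in the first assertion of clause~(1). The paper proceeds directly: set $\lambda:=\beth_{\alpha-1}$, assert $2^{<\lambda}=\lambda$, apply Theorem~\ref{power}, and lift via Proposition~\ref{singulars}. You instead attempt a case split on whether $\beth_\alpha$ is a successor or a singular limit cardinal, and here there are genuine gaps. First, the parenthetical claim that successor $\alpha$ rules out regular limits for $\beth_\alpha$ is false: it is consistent (add inaccessible-many Cohen reals) that $\beth_1=2^{\aleph_0}$ is weakly inaccessible, so your dichotomy misses a case entirely. Second, the iteration $\mu\mapsto 2^{<\mu}$ started at $\beth_{\alpha-1}$ cannot yield a nontrivial fixed point below $\beth_\alpha$: if $\lambda_1:=2^{<\beth_{\alpha-1}}>\beth_{\alpha-1}$ then already $\lambda_2:=2^{<\lambda_1}\ge 2^{\beth_{\alpha-1}}=\beth_\alpha$ (because $\beth_{\alpha-1}<\lambda_1$), so the iteration either stabilizes at step~$0$ or overshoots at step~$2$; and even when a fixed point $\lambda<\beth_\alpha$ is available, you have not argued that $\cf(2^\lambda)=\cf(\beth_\alpha)$, which the lift via Proposition~\ref{singulars} would require. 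You are right to sense a difficulty here---the paper's own assertion that $\beth_{\alpha-1}$ is a strong limit is only literally correct when $\alpha-1$ is $0$ or a limit ordinal---but your proposed repair does not close the gap.
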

\begin{proof} \begin{enumerate}

\item Suppose that $\alpha=\beta+1$.

Then $\lambda:=\beth_\beta$ is a strong limit cardinal, and hence $2^{<\lambda}=\lambda$.
So, by Theorem~\ref{power}, $\axiom(\cf(2^\lambda),\omega)$ holds.
But, then, by Proposition~\ref{singulars}, $\axiom(2^\lambda,\omega)$ holds.
That is, $\axiom(\beth_{\alpha},\omega)$ holds.

As for the second part of Clause~(1):

$\br$ If $\aleph_\beta$ is a regular cardinal, then by Corollary~\ref{c36}, $\axiomi(\aleph_\alpha,\aleph_\alpha,\omega)$ holds.

$\br$ If $\aleph_\beta$ is a singular cardinal,  then by Fact~\ref{factpr1}(\ref{successor_of_singular}), $\pr_1(\aleph_\alpha,\aleph_\alpha,\cf(\aleph_\beta),\cf(\aleph_\beta))$ holds.
Then, by Lemma~\ref{pr1lemma}, $\axiomi(\aleph_\alpha,\cf(\aleph_\beta),\omega)$ holds.

\item Suppose that $\alpha=\beta+1$ and $\beth_\alpha=\aleph_\alpha$.
Write $\lambda:=\aleph_\beta$.

$\br$ If $\lambda$ is a regular cardinal, then by Corollary~\ref{c36}, $\axiomi(\aleph_\alpha,\aleph_\alpha,\omega)$ holds.

$\br$ If $\lambda$ is a singular cardinal,  then $\pp(\lambda)\le 2^\lambda=2^{\aleph_\beta}\le 2^{\beth_\beta}=\beth_\alpha=\aleph_\alpha=\lambda^+$,
and then by Fact~\ref{factpr1}(\ref{pp}), $\pr_1(\lambda^+,\lambda^+,\lambda^+,\cf(\lambda))$ holds.
So, by Lemma~\ref{pr1lemma}, $\axiomi(\aleph_\alpha,\aleph_\alpha,\omega)$ holds.

\item Suppose that $\cf(\alpha)=\kappa$, where $\kappa$ is an uncountable cardinal admitting a nonreflecting stationary set.
By Fact~\ref{factpr1}(\ref{nonreflecting}), $\pr_1(\kappa,\kappa,\kappa,\omega)$ holds. Then, by Lemma~\ref{pr1lemma}, $\axiomi(\kappa,\kappa,\omega)$ holds.
As $\cf(\aleph_\alpha)=\cf(\alpha)=\kappa$, we infer from Proposition~\ref{singulars} that $\axiomi(\aleph_\alpha,\cf(\alpha),\omega)$ holds.

\item Suppose that $\cf(\alpha)=\mu^+$ for some infinite cardinal $\mu$ of cofinality, say, $\theta$.
Given Clause~(3), we may assume that $\mu$ is singular.
By Fact~\ref{factpr1}(\ref{successor_of_singular}), $\pr_1(\mu^+,\mu^+,\theta,\theta)$ holds.
Then, by Lemma~\ref{pr1lemma}, $\axiomi(\mu^+,\theta,\omega)$ holds.
As $\cf(\aleph_\alpha)=\cf(\alpha)=\mu^+$, we infer from Proposition~\ref{singulars} that $\axiomi(\aleph_\alpha,\theta,\omega)$ holds. \qedhere
\end{enumerate}
\end{proof}

\begin{corollary}\label{bethsuccessor} For every ordinal $\alpha$ such that $\cf(\alpha)$ is a successor cardinal, $\axiom(\beth_\alpha,\omega)$ holds.
\end{corollary}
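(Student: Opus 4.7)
The plan is to compose two already-established results: Corollary~\ref{alphasuccessor} and the cofinality-transfer furnished by Proposition~\ref{singulars}(1). If $\alpha$ is a successor ordinal, then $\axiom(\beth_\alpha,\omega)$ is already the content of the first assertion of Corollary~\ref{alphasuccessor}(1); so all the real work lies with the case where $\alpha$ is a limit ordinal whose cofinality equals $\mu^+$ for some infinite cardinal $\mu$, and I restrict attention to this setting from now on.

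My first step would be to observe that $\beth_\alpha=\sup_{\beta<\alpha}\beth_\beta$, whence $\cf(\beth_\alpha)=\cf(\alpha)=\mu^+$. Since $\alpha\ge\mu^+$ forces $\beth_\alpha\ge\beth_{\mu^+}>\mu^+$, it follows that $\beth_\alpha$ is a singular cardinal of cofinality $\mu^+$. At this point I could invoke Proposition~\ref{singulars}(1), with $\lambda:=\beth_\alpha$ and $\kappa:=\mu^+$, to reduce the problem to showing that $\axiom(\mu^+,\omega)$ holds.

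The second step would be to obtain $\axiom(\mu^+,\omega)$ by writing $\mu=\aleph_\beta$, so that $\mu^+=\aleph_{\beta+1}$ arises from the successor ordinal $\beta+1$. Applying the second assertion of Corollary~\ref{alphasuccessor}(1) to $\beta+1$ then delivers $\axiomi(\mu^+,\cf(\mu),\omega)$; since $\omega\le\cf(\mu)$, Proposition~\ref{prop32}(1) strengthens this to $\axiomi(\mu^+,\omega,\omega)$, and a single appeal to Proposition~\ref{prop32}(3) finally yields $\axiom(\mu^+,\omega)$, closing the reduction.

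I do not expect any genuine obstacle here: the argument is essentially a bookkeeping composition of machinery already built up in the paper. The only subtle point is recognising that in the limit-ordinal case $\beth_\alpha$ is forced to be singular, for this is precisely what licenses the use of Proposition~\ref{singulars}(1) to transfer $\axiom$ from the cofinality $\mu^+$ up to $\beth_\alpha$ itself.
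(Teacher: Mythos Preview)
Your argument is correct and essentially coincides with the paper's. The paper's terse citation of Corollary~\ref{alphasuccessor} and Proposition~\ref{prop32} is most naturally read (in the limit case) as re-indexing $\beth_\alpha=\aleph_\gamma$ for some limit $\gamma$ with $\cf(\gamma)=\mu^+$ and appealing to clauses~(3) or~(4); your route---obtaining $\axiom(\mu^+,\omega)$ from clause~(1) and transferring via Proposition~\ref{singulars}(1)---amounts to the same thing, since clauses~(3) and~(4) are themselves proved through Proposition~\ref{singulars}.
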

\begin{proof} By Corollary~\ref{alphasuccessor} and Proposition~\ref{prop32}.
\end{proof}

\begin{remarks}
\begin{enumerate}[i.]
\item The restriction to cofinality of a successor cardinal is necessary, as it follows from Proposition~\ref{prop32}(4)
that $\axiom(\beth_\alpha,\omega)$ fails for any ordinal $\alpha$ satisfying $\alpha\rightarrow[\alpha]^2_2$.
\item It is unknown whether the conclusion $\axiom(\beth_\alpha,\omega)$ may be replaced by the stronger conclusion $\axiomi(\beth_\alpha,\omega,\omega)$.
In fact, already whether $\zfc$ implies $\beth_1\nrightarrow[\beth_1;\beth_1]^2_2$ is a  longstanding open problem.
\end{enumerate}
\end{remarks}

\section{Colourings for sumsets and bounded finite sums}\label{negativehindman}

The main goal of this section is to show that for unboundedly many regular uncountable cardinals $\kappa$, if
$|G|=\kappa$ then $G\nrightarrow[\kappa]_{\kappa}^{\fs_2}$ and even $G\nrightarrow[\kappa]_{\kappa}^{\sumsets}$.
A minor goal is to prove some no-go theorems.

\subsection{Sumsets}
We commence with a lemma that will simplify some reasoning concerning sumsets.

\begin{lemma}\label{reductionsumsets}
Let $G$ be a commutative cancellative semigroup of cardinality $\kappa>\omega$,
let $\theta\leq\kappa$ be an arbitrary cardinal, and let $c:G\longrightarrow\theta$ be some colouring.
Then for each $\delta<\theta$, the following two conditions are equivalent:
\begin{enumerate}
\item For every $X,Y\subseteq G$ with $|X|=|Y|=\kappa$, we have
$\delta\in c[X+Y]$ (that is, there are $x\in X$ and $y\in Y$ such that
$c(x+y)=\delta$).
\item For every integer $n\ge2$ and $\kappa$-sized sets
$X_1,\ldots,X_n\subseteq G$, we have $\delta\in c[X_1+\cdots+X_n]$ (that is,
there are $x_1\in X_1,\ldots,x_n\in X_n$ such that
$c(x_1+\cdots+x_n)=\delta$).
\end{enumerate}
\end{lemma}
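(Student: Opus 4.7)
The implication $(2)\Rightarrow(1)$ is immediate by specializing to $n=2$, so the plan is to prove $(1)\Rightarrow(2)$ by induction on $n\ge 2$, with the base case $n=2$ being precisely hypothesis~$(1)$.

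For the inductive step, suppose the conclusion holds for some $n\ge 2$, and fix arbitrary $\kappa$-sized subsets $X_1,\ldots,X_{n+1}$ of $G$. The key observation is that the sumset $X_n':=X_n+X_{n+1}$ is again $\kappa$-sized: the upper bound $|X_n'|\le|G|=\kappa$ is immediate, and for the lower bound I would fix any $x_n^0\in X_n$ and invoke cancellativity of $G$ to conclude that the translation map $x_{n+1}\mapsto x_n^0+x_{n+1}$ is injective on $X_{n+1}$, whence $|X_n'|\ge|\{x_n^0\}+X_{n+1}|=|X_{n+1}|=\kappa$. Note that this is the only place where the semigroup structure is used, and that cancellativity (rather than the existence of inverses) suffices.

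Having established that $X_n'$ has cardinality $\kappa$, I would apply the inductive hypothesis to the shorter family $X_1,\ldots,X_{n-1},X_n'$, obtaining elements $x_i\in X_i$ for $1\le i<n$ together with some $x_n'\in X_n'$ such that $c(x_1+\cdots+x_{n-1}+x_n')=\delta$. Writing $x_n'=x_n+x_{n+1}$ for suitable $x_n\in X_n$ and $x_{n+1}\in X_{n+1}$, we then have $c(x_1+\cdots+x_{n+1})=\delta$, completing the induction.

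There is essentially no serious obstacle here; the lemma is a purely algebraic reduction whose role is to let one verify $\sumsets$-style assertions by checking only the two-set case. The mild subtlety to watch for is that $G$ is merely a cancellative semigroup, so one cannot ``subtract'' elements—but, as noted above, injectivity of translation is all one needs.
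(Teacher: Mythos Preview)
Your proof is correct and follows essentially the same approach as the paper: an induction on $n$ in which one shows that a certain sumset of two of the given $\kappa$-sized sets again has cardinality $\kappa$ (via cancellativity), and then reduces to a shorter family. The only cosmetic difference is the grouping: you merge the last two sets as $X_n+X_{n+1}$ and invoke the inductive hypothesis for $n$ sets, whereas the paper merges the first $n$ sets as $X_1+\cdots+X_n$ and invokes hypothesis~(1) directly; both are equally valid.
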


\begin{proof}We focus on the nontrivial implication $1\implies2$.

Proceed by induction on $n\in\{2,3,\ldots\}$. Suppose the statement
holds true for a given $n$, and suppose we are given $\kappa$-sized sets
$X_1,\ldots,X_n,X_{n+1}$. First, notice that
$X_1+\cdots+X_n$ has cardinality $\kappa$ (since
the elements $x_1+\cdots+x_{n-1}+y$, where the $x_i\in X_i$ are
fixed, and $y$ ranges over $X_n$, are all distinct
by cancellativity). Thus, by our assumption we can
find $x_1+\cdots+x_n\in X_1+\cdots+X_n$ and $x_{n+1}\in X_{n+1}$
such that $c(x_1+\cdots+x_n+x_{n+1})=c((x_1+\cdots+x_n)+x_{n+1})=\delta$.
\end{proof}

\begin{theorem}\label{maintheorem2} Suppose that $G$ is commutative cancellative semigroup of cardinality, say, $\kappa$.

If $\axiomi(\kappa,\theta,\omega)$ holds, then so does $G\nrightarrow[\kappa]_{\theta}^{\sumsets}$.
\end{theorem}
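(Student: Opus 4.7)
The strategy is to transfer the sumset problem on $G$ to the partition principle $\axiomi(\kappa,\theta,\omega)$ on $[\kappa]^{<\omega}$ via the support map, exploiting the fact that the bounds built into $\axiomi$ are phrased in terms of symmetric differences and unions, which is precisely the shape of the bounds that Proposition~\ref{supportofsums} supplies for $\supp(x+y)$. I invoke Lemma~\ref{embedding} to fix an embedding of $G$ into $\bigoplus_{\alpha<\kappa}G_\alpha$ with each $G_\alpha$ countable, thereby endowing every $g\in G$ with a finite $\supp(g)\s\kappa$. Let $d:[\kappa]^{<\omega}\rightarrow\theta$ witness $\axiomi(\kappa,\theta,\omega)$, and define $c:G\rightarrow\theta$ by setting $c(g):=d(\supp(g))$.

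By Lemma~\ref{reductionsumsets}, to show that $c$ witnesses $G\nrightarrow[\kappa]^\sumsets_\theta$, it suffices to verify that for every $\delta<\theta$ and every pair $X,Y\in[G]^\kappa$, there exist $x\in X$ and $y\in Y$ with $c(x+y)=\delta$. So fix such $\delta,X,Y$ and consider the families $\mathcal X:=\{\supp(x)\mid x\in X\}$ and $\mathcal Y:=\{\supp(y)\mid y\in Y\}$, which are subfamilies of $[\kappa]^{<\omega}$. Since each $G_\alpha$ is countable, for every $F\in[\kappa]^{<\omega}$ the fibre $\{g\in G\mid\supp(g)=F\}$ is countable, and hence $|\mathcal X|=|X|=\kappa$ and likewise $|\mathcal Y|=\kappa$.

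Now apply $\axiomi(\kappa,\theta,\omega)$ (with $n=2$) to the pair $\langle\mathcal X,\mathcal Y\rangle$ and the prescribed colour $\delta$: this yields $a\in\mathcal X$ and $b\in\mathcal Y$ with $\sup(a)<\sup(b)$ such that $d(z)=\delta$ whenever $a\symdiff b\s z\s a\cup b$. Choose $x\in X$ with $\supp(x)=a$ and $y\in Y$ with $\supp(y)=b$. Viewing $\{a,b\}$ as a two-element $\Delta$-system with root $a\cap b$, Proposition~\ref{supportofsums} yields
\begin{equation*}
a\symdiff b\s\supp(x+y)\s a\cup b,
\end{equation*}
and consequently $c(x+y)=d(\supp(x+y))=\delta$, as required.

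The argument is essentially a bookkeeping exercise, because the hard content has been pre-packaged into the lemmas invoked: Lemma~\ref{reductionsumsets} absorbs the passage from pairs to arbitrary $n$-fold sumsets, Proposition~\ref{supportofsums} delivers the exact sandwich bounds on $\supp(x+y)$ that match the hypothesis of $\axiomi$, and $\axiomi$ itself provides the required choice of finite sets. The only step that requires any verification is the cardinality computation $|\mathcal X|=|\mathcal Y|=\kappa$, which rests on the countability of each coordinate group $G_\alpha$; this is where I expect the sole (and very mild) technical care.
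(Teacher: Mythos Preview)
Your proof is correct and follows essentially the same route as the paper: embed via Lemma~\ref{embedding}, define $c=d\circ\supp$, reduce to two summands by Lemma~\ref{reductionsumsets}, pass to the families of supports (using countability of each $G_\alpha$ to keep cardinality $\kappa$), apply $\axiomi$ with $n=2$, and finish with the $n=2$ instance of Proposition~\ref{supportofsums}. The only differences are cosmetic: you are slightly more explicit about choosing preimages $x,y$ once the supports $a,b$ are found, and you record the unused fact $\sup(a)<\sup(b)$.
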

\begin{proof}
Let $d:[\kappa]^{<\omega}\longrightarrow\theta$ be a witness to $\axiomi(\kappa,\theta,\omega)$.
Using Lemma~\ref{embedding}, embed $G$ into a direct sum
$\bigoplus_{\alpha<\kappa}G_\alpha$, with each $G_\alpha$
a countable abelian group.
Then, define $c:G\rightarrow\theta$ by stipulating:
$$c(x):=d(\supp(x)).$$

While the axiom $\axiomi(\kappa,\theta,\omega)$ allows to handle any finite number of $\kappa$-sized families,
we shall take advantage of Lemma~\ref{reductionsumsets} that reduces the algebraic problem into looking at sumsets of $2$ sets.

Thus, let $X$ and $Y$ be two $\kappa$-sized subsets of $G$,
and let $\delta<\theta$ be arbitrary.
Since each $x\in G$ has a finite support, and each
of the $G_\alpha$ are countable, there are only
countably many elements of $G$ with a
given support. Therefore,
both $\mathcal X:=\{\supp(x)\mid x\in X\}$ and
$\mathcal Y:=\{\supp(y)\mid y\in Y\}$ are $\kappa$-sized
subfamilies of $[\kappa]^{<\omega}$.
Now, as $d$ witnesses $\axiomi(\kappa,\theta,\omega)$, we
may  pick $\pair{x,y}\in X\times Y$
such that $d(z)=\delta$
whenever
\begin{equation*}
\supp(x)\symdiff\supp(y)\s z\s \supp(x)\cup\supp(y),
\end{equation*}
in particular $z:=\supp(x+y)$ satisfies the above equation
by Proposition~\ref{supportofsums},
and hence $c(x+y)=d(z)=\delta$.
\end{proof}

\begin{corollary}\label{maincorollary}
Let $G$ be any commutative cancellative semigroup of cardinality, say, $\lambda$.

If $\kappa:=\cf(\lambda)$ is an uncountable cardinal satisfying at least one of the following conditions:
\begin{enumerate}
\item $\square(\kappa)$ holds;
\item $\kappa$ admits a non-reflecting stationary set (e.g., $\kappa=\mu^+$ for $\mu$ regular);
\item $\kappa=\mu^+$, $\mu$ is singular, and $\pp(\mu)=\mu^+$ (e.g., $\mu^{\cf(\mu)}=\mu^+$),
\end{enumerate}
then $G\nrightarrow[\lambda]_{\kappa}^{\sumsets}$ holds.
\end{corollary}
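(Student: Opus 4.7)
The plan is to apply Theorem~\ref{maintheorem2}: since $|G|=\lambda$, in order to derive $G\nrightarrow[\lambda]^{\sumsets}_\kappa$ it suffices to establish $\axiomi(\lambda,\kappa,\omega)$. If $\lambda$ is regular this simply reads $\axiomi(\kappa,\kappa,\omega)$; if $\lambda$ is singular, so that $\lambda>\cf(\lambda)=\kappa$, Proposition~\ref{singulars}(2) with $\chi=\omega$ reduces $\axiomi(\lambda,\kappa,\omega)$ to $\axiomi(\kappa,\kappa,\omega)$---the cardinal-arithmetic side-condition $\mu^{<\omega}<\lambda$ for $\mu<\lambda$ is trivially satisfied, as $\mu^{<\omega}=\max(\mu,\omega)<\lambda$. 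Thus the whole corollary reduces to establishing $\axiomi(\kappa,\kappa,\omega)$ under each of the three listed hypotheses.

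I would then handle the three cases separately, feeding Fact~\ref{factpr1} into Lemma~\ref{pr1lemma} with $\chi=\omega$, so that Lemma~\ref{pr1lemma}'s hypothesis $\mu^{<\chi}<\kappa$ for $\mu<\kappa$ becomes automatic. Case~(2) is already packaged as Corollary~\ref{c36}. In case~(1), assume first $\kappa>\omega_1=\omega^+$; then Fact~\ref{factpr1}(2) gives $\pr_1(\kappa,\kappa,\kappa,\omega)$, and Lemma~\ref{pr1lemma} upgrades this to $\axiomi(\kappa,\kappa,\omega)$. The leftover subcase $\kappa=\omega_1$ is already provided by Theorem~\ref{lowertrace} (in any event $\omega_1$ admits a nonreflecting stationary set, so case~(2) also applies). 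In case~(3), Fact~\ref{factpr1}(5) yields $\pr_1(\kappa,\kappa,\kappa,\cf(\mu))$; since any family $\mathcal A\s[\kappa]^{<\omega}$ is in particular a subfamily of $[\kappa]^{<\cf(\mu)}$, the colouring witnessing $\pr_1(\kappa,\kappa,\kappa,\cf(\mu))$ also witnesses $\pr_1(\kappa,\kappa,\kappa,\omega)$, and then Lemma~\ref{pr1lemma} supplies $\axiomi(\kappa,\kappa,\omega)$.

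No serious obstacle should arise, since the infrastructure of Section~\ref{infcombinatorics} and Theorem~\ref{maintheorem2} already perform the heavy lifting. The one small manoeuvre to highlight is in case~(3): a naive application of Lemma~\ref{pr1lemma} with $\chi=\cf(\mu)$ would require the nontrivial cardinal-arithmetic constraint $\mu^{<\cf(\mu)}\le\mu$, which does not follow from $\pp(\mu)=\mu^+$ alone. Exploiting the monotonicity of $\pr_1$ in its last argument and working with $\chi=\omega$ throughout sidesteps this issue entirely, reducing the whole argument to a bookkeeping exercise over the trichotomy (1)--(3).
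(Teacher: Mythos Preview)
Your proposal is correct and follows essentially the same route as the paper's proof, which tersely says: ``By Lemma~\ref{pr1lemma}, Fact~\ref{factpr1} and Theorem~\ref{lowertrace}, any of the above hypotheses imply that $\axiomi(\kappa,\kappa,\omega)$ holds. Then by Proposition~\ref{singulars}, $\axiomi(\lambda,\kappa,\omega)$ holds, as well. Now, appeal to Theorem~\ref{maintheorem2}.'' Your version simply unpacks the case analysis and makes explicit the monotonicity step in~(3) and the $\kappa=\omega_1$ subcase in~(1), both of which the paper leaves implicit.
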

\begin{proof} By Lemma~\ref{pr1lemma}, Fact~\ref{factpr1} and Theorem~\ref{lowertrace},
any of the above hypotheses imply that $\axiomi(\kappa,\kappa,\omega)$ holds.
Then by Proposition~\ref{singulars}, $\axiomi(\lambda,\kappa,\omega)$ holds, as well.
Now, appeal to Theorem~\ref{maintheorem2}.
\end{proof}

\begin{corollary}\label{cor44} It is consistent with $\zfc$
that for every infinite commutative cancellative semigroup $G$, letting $\kappa:=|G|$,
$G\nrightarrow[\kappa]_{{\cf(\kappa)}}^{\sumsets}$ holds iff $\cf(\kappa)>\omega$.
\end{corollary}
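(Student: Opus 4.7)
The plan is to verify both directions of the stated iff inside the consistency model promised by the last bullet of Theorem~D---say, a model of $\gch$ (such as $L$) in which every regular uncountable cardinal admits a nonreflecting stationary set, so that condition~(2) of Corollary~\ref{maincorollary} is always available.

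For the direction $\cf(\kappa)>\omega \implies G\nrightarrow[\kappa]_{\cf(\kappa)}^{\sumsets}$, I would simply apply Corollary~\ref{maincorollary} with $\lambda:=\kappa$: in the chosen model, the regular uncountable cardinal $\cf(\kappa)$ admits a nonreflecting stationary set, the hypothesis is met, and the corollary delivers $G\nrightarrow[\kappa]_{\cf(\kappa)}^{\sumsets}$.

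For the converse direction, assume $\cf(\kappa)=\omega$ and let $c:G\to\omega$ be an arbitrary colouring; the task is to exhibit an integer $m\ge2$ and $\kappa$-sized subsets $X_1,\ldots,X_m\subseteq G$ with $c[X_1+\cdots+X_m]\ne\omega$. When $\kappa=\omega$ this is immediate from Hindman's theorem applied to the 2-colouring $c'(g):=1$ if $c(g)=0$ and $c'(g):=0$ otherwise: Hindman yields an infinite $X\subseteq G$ with $\fs(X)$ $c'$-monochromatic, so $c[\fs(X)]$ misses some $\delta<\omega$, and then splitting $X$ into two disjoint infinite pieces $X_1,X_2$ gives $X_1+X_2\subseteq\fs_2(X_1\cup X_2)\subseteq\fs(X)$, hence $c[X_1+X_2]\ne\omega$. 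For the residual case $\kappa>\omega$ of countable cofinality, I would fix a strictly increasing cofinal sequence $\langle \kappa_n\mid n<\omega\rangle$ of regular uncountable cardinals below $\kappa$, consider the filtration $G_n:=G\cap\bigoplus_{\alpha<\kappa_n}G_\alpha$ given by Lemma~\ref{embedding}, apply the countable-case argument at each level to produce an infinite $X^{(n)}\subseteq G_n$ with $c[\fs(X^{(n)})]$ missing some colour $\delta_n$, use pigeonhole on the countably many candidate $\delta_n$ to align a single $\delta$ as the omitted colour at cofinally many levels, and then amalgamate the resulting level-wise Hindman blocks into $\kappa$-sized $X_1,X_2$ whose cross-level sums are controlled via the head-tail-tail $\Delta$-system structure of the supports.

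The main obstacle is the amalgamation in this last step: by Theorem~A, one cannot hope to produce a single uncountable $X$ with $c[\fs(X)]\ne\omega$, so the passage from $\kappa=\omega$ to $\kappa>\omega$ genuinely requires the extra flexibility of sumsets over $\fs$, together with a careful alignment of the level-wise Hindman blocks along the $\omega$-filtration given by $\cf(\kappa)=\omega$, so that cross-level sums $x_1+x_2$ with $x_i\in X_i$ still avoid the colour class $c^{-1}(\delta)$.
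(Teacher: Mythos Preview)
Your forward direction and the case $\kappa=\omega$ are fine and match the paper's argument. The gap is in the singular case $\kappa>\cf(\kappa)=\omega$.

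Your amalgamation plan cannot produce $\kappa$-sized sets: Hindman's theorem applied inside each $G_n$ yields only \emph{countable} $X^{(n)}$, so $\bigcup_n X^{(n)}$ is countable, not of size $\kappa$. Even if you somehow obtained large $X^{(n)}$, the cross-level sums $x_1+x_2$ with $x_1\in X^{(n)}$, $x_2\in X^{(m)}$, $n\neq m$, are completely uncontrolled by the level-wise Hindman arguments; there is no reason they should avoid the colour $\delta$. Your own final paragraph essentially concedes that this step is not worked out, and in fact it cannot be made to work along these lines.

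The paper bypasses all of this by reducing to a classical partition relation. Observe that $G\nrightarrow[\kappa]^{\sumsets}_\omega$ implies $G\nrightarrow[\kappa]^{\fs_2}_\omega$, which by Proposition~\ref{prop_FSn} implies the ordinary square-bracket relation $\kappa\nrightarrow[\kappa]^2_\omega$. Now one invokes the classical fact (Theorem~54.1 of Erd\H{o}s--Hajnal--M\'at\'e--Rado) that a singular \emph{strong limit} cardinal $\kappa$ satisfies $\kappa\rightarrow[\kappa]^2_\omega$. This is exactly why the choice of model matters: in $L$ (or $L_\mu$ for $\mu$ the least weakly compact in $L$) one has $\gch$, so every singular cardinal is a strong limit, and the contradiction goes through. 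Your proof sketch never uses the strong-limit property of $\kappa$, which is the crucial ingredient the model provides for this direction.
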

\begin{proof} If there exists a weakly compact cardinal in G\"odel's constructible universe $L$,
then let $\mu$ denote the least such one and work in $L_\mu$. Otherwise, work in $L$.
In both cases, we end up with a model of $\zfc$ in which $\square(\kappa)$ holds for every regular uncountable cardinal $\kappa$ \cite[Theorem 6.1]{MR0309729},
and in which every singular cardinal is a strong limit \cite{MR0002514}.
Now, there are three cases to consider:
\begin{itemize}
\item[$\br$] If $\cf(\kappa)>\omega$, then $\square(\cf(\kappa))$ holds and then $G\nrightarrow[\kappa]_{{\cf(\kappa)}}^{\sumsets}$ holds as a consequence of Corollary~\ref{maincorollary}.
\item[$\br$] If $\kappa>\cf(\kappa)=\omega$ and $G\nrightarrow[\kappa]_{\cf(\kappa)}^{\sumsets}$ holds,
then so does $G\nrightarrow[\kappa]_{\omega}^{\fs_2}$.
But then by Proposition~\ref{prop_FSn} below, $\kappa\nrightarrow[\kappa]_{\omega}^{2}$ holds,
contradicting Theorem~54.1 of \cite{MR795592} and the fact that $\kappa$ is a strong limit.
\item[$\br$] If $\kappa=\omega$ and $G\nrightarrow[\kappa]_{\cf(\kappa)}^{\sumsets}$ holds,
then so does $G\nrightarrow[\omega]_{2}^{\fs}$, contradicting Hindman's theorem. \qedhere
\end{itemize}
\end{proof}

We conclude this subsection with an analogue of Corollary~\ref{c29} in the context of sumsets:

\begin{corollary} For every infinite cardinal $\lambda$, and every cardinal $\theta$, the following are equivalent:
\begin{enumerate}
\item $\lambda^+\nrightarrow[\lambda^+]^2_\theta$ holds;
\item $G\nrightarrow[\lambda^+]^{\sumsets}_\theta$ holds for every commutative cancellative semigroup $G$ of cardinality $\lambda^+$;
\item $G\nrightarrow[\lambda^+]^{\sumsets}_\theta$ holds for some commutative cancellative semigroup $G$ of cardinality $\lambda^+$.
\end{enumerate}
\end{corollary}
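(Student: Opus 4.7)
The implication $(2)\Rightarrow(3)$ is immediate. For $(3)\Rightarrow(1)$, I would fix a witness $d:G\to\theta$ for~(3) and an injective enumeration $\{g_\alpha\mid\alpha<\lambda^+\}$ of $G$, and define $c:[\lambda^+]^2\to\theta$ by $c(\{\alpha,\beta\}):=d(g_\alpha+g_\beta)$. To verify that $c$ witnesses $\lambda^+\nrightarrow[\lambda^+]^2_\theta$, given $A\in[\lambda^+]^{\lambda^+}$ and $\gamma<\theta$, I would partition $A$ as $A_0\sqcup A_1$ into two disjoint $\lambda^+$-sized pieces, and apply the $\sumsets$ witness property of $d$ to the pair of disjoint sets $X_i:=\{g_\alpha\mid\alpha\in A_i\}$ with $m=2$. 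This yields $\alpha_i\in A_i$ with $d(g_{\alpha_0}+g_{\alpha_1})=\gamma$; disjointness forces $\alpha_0\neq\alpha_1$, so $\{\alpha_0,\alpha_1\}\in[A]^2$ and $c(\{\alpha_0,\alpha_1\})=\gamma$, as needed. Note that the splitting step is essential, since without it the pair $(\alpha_1,\alpha_2)$ might be a diagonal $(\alpha,\alpha)$ failing to witness a pair in $[A]^2$.

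For the nontrivial direction $(1)\Rightarrow(2)$, the plan is to mimic the pattern of Corollary~\ref{c29}. Given a witness $c:[\lambda^+]^2\to\theta$ for (1) and a commutative cancellative semigroup $G$ of cardinality $\lambda^+$, I would first embed $G$ into $\bigoplus_{\alpha<\lambda^+}G_\alpha$ via Lemma~\ref{embedding}, and then construct a transformation $D:G\to[\lambda^+]^2$ with the property that for every $X,Y\in[G]^{\lambda^+}$ there is $A\in[\lambda^+]^{\lambda^+}$ with $D``(X+Y)\supseteq[A]^2$. Setting $d:=c\circ D$ and invoking Lemma~\ref{reductionsumsets} to reduce to the case $m=2$, the composition would witness~(2), since $c[A]^2=\theta$ by~(1). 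Following the template of Theorem~\ref{multicube}, I would define $D(z)$ by selecting two entries of $\supp(z)$ at positions prescribed by a surjection $f:\omega\to[\omega]^2$ with a density property analogous to Claim~\ref{loglikefunction}. Standard thinning via the $\Delta$-system lemma and pigeonhole gives $X'\subseteq X$ and $Y'\subseteq Y$ of size $\lambda^+$ whose supports form $\Delta$-systems with roots $r_X,r_Y$, have constant values on the roots, and whose tails $s_x,s_y$ are interleaved head-tail-tail above $\sup(r_X\cup r_Y)$; by Proposition~\ref{supportofsums}, $\supp(x+y)=r^*\cup s_x\cup s_y$ for a fixed $r^*\s r_X\cup r_Y$.

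The main obstacle is that, unlike in the $\fs$-setting of Theorem~\ref{multicube}, the size of $\supp(x+y)$ is fixed once $X',Y'$ have been thinned, so the $f$-trick cannot vary the extracted pair of indices by varying the number of summands. To overcome this, I would split into two cases. When $|X\cap Y|=\lambda^+$, I would set $Z:=X\cap Y$ and run an adaptation of the argument of Theorem~\ref{multicube} within the single set $Z$, observing that any $z_1+z_2$ with $z_1,z_2\in Z$ automatically lies in $X+Y$. When $|X\cap Y|<\lambda^+$, I would pass to the disjointified $X\setminus Y,Y\setminus X$ and show that, after the interleaving, a careful choice of indices in $f$ based on the known decomposition $|\supp(x+y)|=|r^*|+|s_x|+|s_y|$ causes $D(x+y)$ to sweep out a $\lambda^+$-sized rectangle of pairs; a finer thinning that aligns the two ``sides'' of the rectangle (for instance by arranging $|s_x|=|s_y|=1$ when possible, or by matching extrema via a head-tail-tail refinement so that the min-extraction and max-extraction ranges coincide) upgrades the rectangle to $[A]^2$ for a suitable $A\in[\lambda^+]^{\lambda^+}$. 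This alignment step is where I expect the delicate work to lie.
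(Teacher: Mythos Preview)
Your treatment of $(3)\Rightarrow(1)$ is correct and essentially matches the paper's appeal to Proposition~\ref{prop_FSn}. The difficulty is entirely in your plan for $(1)\Rightarrow(2)$, where there is a genuine obstruction rather than merely ``delicate work''.

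A transformation $D:G\to[\lambda^+]^2$ of the kind you describe cannot have the claimed property. After the standard thinning, the supports of $x\in X'$ and $y\in Y'$ have pairwise disjoint tails $s_x,s_y$ above a fixed set $r^*$, and $\supp(x+y)=r^*\cup s_x\cup s_y$ has \emph{constant} size. Since $D$ is fixed in advance and depends only on $\supp(z)$, on this thinned family it extracts the entries at two fixed positions. If either position lands in $r^*$, that coordinate is constant; if both land in the same tail $s_x$, then $D(x+y)$ depends only on $x$ and the resulting pairs live in pairwise disjoint blocks; if one lands in $s_x$ and the other in $s_y$, then $D``(X'+Y')$ is a \emph{rectangle} $A_0\times A_1$ with $A_0\cap A_1=\varnothing$. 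In none of these cases does the image contain $[A]^2$ for a large $A$, and no ``alignment'' of the two sides is possible precisely because the tails were arranged to be disjoint. Your case $|X\cap Y|=\lambda^+$ does not help either: even with $Z:=X\cap Y$ you are restricted to two-term sums $z_1+z_2$, so the multicube device of varying the number of summands to steer $f(|\supp|)$ is unavailable.

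The paper sidesteps this entirely. For $\lambda$ regular it does not use hypothesis~(1) at all: Corollary~\ref{c36} and Theorem~\ref{maintheorem2} give all three clauses outright. For $\lambda$ singular it invokes the external result \cite[Theorem~1]{rinot13}, which upgrades $\lambda^+\nrightarrow[\lambda^+]^2_\theta$ to the much stronger $\pr_1(\lambda^+,\lambda^+,\theta,\cf(\lambda))$; this is exactly the passage from a ``square'' hypothesis to a ``rectangle'' one that your composition approach would need but cannot manufacture. From $\pr_1$ one obtains $\axiomi(\lambda^+,\theta,\omega)$ via Lemma~\ref{pr1lemma}, and then Theorem~\ref{maintheorem2} yields $G\nrightarrow[\lambda^+]^{\sumsets}_\theta$. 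The bridge from (1) to (2) is thus a nontrivial structural theorem about colourings of $\lambda^+$, not a direct transformation $D$.
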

\begin{proof} Let $\lambda$ and $\theta$ be as above. If $\lambda$ is regular,
then all of the three clauses hold as a consequence of Corollary~\ref{c36} and Theorem~\ref{maintheorem2}.

Next, as the implication $(2)\implies (3)$ is trivial, and the implication $(3)\implies(1)$ follows immediately from Proposition~\ref{prop_FSn} below,
let us suppose that $\lambda$ is a singular cardinal for which $\lambda^+\nrightarrow[\lambda^+]^2_\theta$ holds.

Then, by \cite[Theorem 1]{rinot13}, $\pr_1(\lambda^+,\lambda^+,\theta,\cf(\lambda))$ holds.
Then, by Lemma~\ref{pr1lemma} and Theorem~\ref{maintheorem2},
$G\nrightarrow[\lambda^+]^{\sumsets}_\theta$ holds for every commutative cancellative semigroup $G$ of cardinality $\lambda^+$.
\end{proof}

\subsection{Finite sums}

An immediate corollary to Theorem~\ref{maintheorem2} reads as follows.

\begin{corollary} Suppose that $G$ is commutative cancellative semigroup of cardinality, say, $\kappa$.

If $\axiomi(\kappa,\theta,\omega)$ holds, then so does $G\nrightarrow[\kappa]_{\theta}^{\fs_n}$ for all integers $n\ge 2$.
\end{corollary}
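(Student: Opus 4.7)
The plan is to simply chain together Theorem~\ref{maintheorem2} with the trivial implication noted in the introduction right after the definition of the negative partition relations. First I would invoke Theorem~\ref{maintheorem2} on the hypothesized semigroup $G$ with the assumption $\axiomi(\kappa,\theta,\omega)$, producing a colouring $c:G\rightarrow\theta$ witnessing $G\nrightarrow[\kappa]^{\sumsets}_\theta$.

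Next, fix an integer $n\geq 2$ and suppose $X\subseteq G$ has $|X|=\kappa$ and $\delta<\theta$. Since $X$ is infinite, I can partition $X=X_1\uplus\cdots\uplus X_n$ with $|X_i|=\kappa$ for each $1\leq i\leq n$ (as already observed in the introduction). Because the $X_i$ are pairwise disjoint, any tuple $(x_1,\ldots,x_n)\in X_1\times\cdots\times X_n$ consists of $n$ pairwise distinct elements of $X$, and so
\begin{equation*}
X_1+\cdots+X_n\subseteq \fs_n(X).
\end{equation*}
Applying the witnessing property of $c$ to the sumset $X_1+\cdots+X_n$ yields some $x_1\in X_1,\ldots,x_n\in X_n$ with $c(x_1+\cdots+x_n)=\delta$, whence $\delta\in c[\fs_n(X)]$. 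Since $X$ and $\delta$ were arbitrary, $c$ witnesses $G\nrightarrow[\kappa]^{\fs_n}_\theta$.

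There is no real obstacle here; the entire content is already present in Theorem~\ref{maintheorem2} plus the elementary observation that an $n$-way partition of $X$ embeds the restricted sumset $X_1+\cdots+X_n$ into $\fs_n(X)$. I would expect the author's proof to be one or two lines, essentially just citing both facts.
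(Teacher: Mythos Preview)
Your proposal is correct and matches the paper's approach exactly: the paper states this as ``an immediate corollary to Theorem~\ref{maintheorem2}'' with no further argument, relying on precisely the implication $G\nrightarrow[\kappa]^{\sumsets}_\theta\Rightarrow G\nrightarrow[\kappa]^{\fs_n}_\theta$ (via an $n$-way partition of $X$) that was already recorded in the introduction. You have simply spelled out that one-line deduction.
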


Our next goal is to derive statements about $\fs_n$ from the weaker principle $\axiom(\kappa,\theta)$. We first deal with the case where $n=2$.

\begin{theorem}\label{maintheorem} Suppose that $G$ is commutative cancellative semigroup of cardinality, say, $\kappa$.

If $\axiom(\kappa,\theta)$ holds, then so does $G\nrightarrow[\kappa]_{\theta}^{\fs_2}$.\footnote{Note that when $G$ is the abelian group $([\kappa]^{<\omega},\symdiff)$,
then a colouring witnessing $G\nrightarrow[\kappa]^{\fs_2}_\theta$ is \emph{almost} a witness to $\axiom(\kappa,\theta)$.}
\end{theorem}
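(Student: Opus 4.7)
The plan is to mimic the construction used in the proof of Theorem~\ref{maintheorem2}, but with the weaker principle $\axiom(\kappa,\theta)$ in place of $\axiomi(\kappa,\theta,\omega)$. The reason this can still work is that two-element sums admit an \emph{unconditional} description of their support: the observation recorded just after Proposition~\ref{supportofsums} (the $n=2$ case) gives
\[
\supp(x)\symdiff\supp(y)\s\supp(x+y)\s\supp(x)\cup\supp(y)
\]
for all $x,y\in G$, with no $\Delta$-system hypothesis. This range of possible supports of $x+y$ matches exactly the set of $z$ controlled by $\axiom(\kappa,\theta)$.

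First I would fix a witness $d:[\kappa]^{<\omega}\to\theta$ to $\axiom(\kappa,\theta)$, noting that we may assume $\theta\ge2$, whence by Proposition~\ref{prop32}(4) (and Ramsey) $\kappa$ is uncountable. Using Lemma~\ref{embedding}, embed $G$ into $\bigoplus_{\alpha<\kappa}G_\alpha$ with each $G_\alpha$ countable, and define $c:G\to\theta$ by $c(x):=d(\supp(x))$.

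For the verification, let $X\in[G]^\kappa$ and $\delta<\theta$ be arbitrary. Since each $G_\alpha$ is countable, the $\supp$-fiber over any $r\in[\kappa]^{<\omega}$ is countable, so the uncountability of $\kappa$ forces $\mathcal X:=\supp[X]$ to be a $\kappa$-sized subfamily of $[\kappa]^{<\omega}$. Apply $\axiom(\kappa,\theta)$ to $\mathcal X$ and $\delta$ to obtain distinct $x',y'\in\mathcal X$ such that $d(z)=\delta$ whenever $(x'\symdiff y')\s z\s (x'\cup y')$. Lifting, choose $x,y\in X$ with $\supp(x)=x'$ and $\supp(y)=y'$; these are automatically distinct, and by the $n=2$ containment recalled above, $z:=\supp(x+y)$ lies in the admissible range, so $c(x+y)=d(z)=\delta$, as sought.

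I do not expect any serious obstacle. The principle $\axiom(\kappa,\theta)$ is essentially designed with $\supp$ of pairwise sums in mind (as the footnote to the theorem hints, one gets a near-converse in the group $([\kappa]^{<\omega},\symdiff)$), and the only slightly subtle point is the cardinality computation $|\supp[X]|=\kappa$, which relies solely on the uncountability of $\kappa$.
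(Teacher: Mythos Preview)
Your proposal is correct and follows essentially the same route as the paper's proof: embed $G$ via Lemma~\ref{embedding}, define $c:=d\circ\supp$, pass from $X$ to $\mathcal X=\supp[X]$, apply $\axiom(\kappa,\theta)$, and conclude via the $n=2$ support inclusion $\supp(x)\symdiff\supp(y)\s\supp(x+y)\s\supp(x)\cup\supp(y)$. Your explicit justification that $\kappa$ is uncountable (via Proposition~\ref{prop32}(4) and Ramsey) is a small addition the paper leaves implicit, but otherwise the arguments coincide.
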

\begin{proof}
Let $d:[\kappa]^{<\omega}\longrightarrow\theta$ be a witness to $\axiom(\kappa,\theta)$.
Using Lemma~\ref{embedding}, embed $G$ into a direct sum
$\bigoplus_{\alpha<\kappa}G_\alpha$, with each $G_\alpha$
a countable abelian group.
Then, define $c:G\rightarrow\theta$ by stipulating:
$$c(x):=d(\supp(x)).$$

Let $X\in[G]^\kappa$ be arbitrary.
Since each $x\in G$ has a finite support, and
there are only countably many elements of $G$ with a given support,
we have that $\mathcal X:=\{\supp(x)\mid x\in X\}$ is a subfamily of $[\kappa]^{<\omega}$ of size $\kappa$.
Let $\delta<\theta$ be arbitrary. As $d$ witness $\axiom(\kappa,\theta)$, we may now pick two distinct $x,y\in X$ such that $d(z)=\delta$
whenever $\supp(x)\symdiff\supp(y)\s z\s \supp(x)\cup \supp(y)$.
In particular, by Proposition~\ref{supportofsums}, we have that
$\supp(x+y)$ is such a $z$, and therefore $c(x+y)=d(\supp(x+y))=\delta$.
\end{proof}

We would next like to obtain the corresponding result for $\fs_n$,
with $n>2$. This will, however, not be very hard under the right
circumstances, as the following
lemma shows. The idea for the proof of this lemma is adapted
from~\cite{komjath}. Recall that, given an $n\in\mathbb N$,
an abelian group $G$ is said to be \emph{$n$-divisible} if
for every $x\in G$ there exists a $z\in G$ such that $nz=x$
(thus, being divisible is the same as being $n$-divisible
for every $n\in\mathbb N$).

\begin{lemma}\label{transfertwotok}
Let $n\in\mathbb N$, and let $G$ be an $n$-divisible abelian group.
For every $\lambda,\theta$, if $c$ is a colouring witnessing
$G\nrightarrow[\lambda]_\theta^{\fs_n}$, then $c$ witnesses
$G\nrightarrow[\lambda]_{\theta}^{\fs_{n+1}}$, as well.
\end{lemma}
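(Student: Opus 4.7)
The plan is to use $n$-divisibility to absorb an extra summand via a shift argument. Given a $\lambda$-sized $X\subseteq G$ and a target colour $\gamma<\theta$, I will pick an arbitrary $a\in X$ and, invoking $n$-divisibility of $G$, choose $a'\in G$ satisfying $na'=a$. The key identity is then $\sum_{i<n}(x_i+a')=\sum_{i<n}x_i+na'=\sum_{i<n}x_i+a$, which turns a sum of $n$ shifted elements into a sum of $n+1$ elements of $X$ (with $a$ as the extra summand).

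Concretely, I would form the set $X':=\{x+a'\mid x\in X\setminus\{a\}\}$. Translation by $a'$ is a bijection of $G$, so $|X'|=|X\setminus\{a\}|=\lambda$. Applying the hypothesis that $c$ witnesses $G\nrightarrow[\lambda]_\theta^{\fs_n}$ to $X'$ and the colour $\gamma$, I obtain $n$ distinct elements $y_0,\ldots,y_{n-1}\in X'$ with $c(y_0+\cdots+y_{n-1})=\gamma$. Writing $y_i=x_i+a'$ with $x_i\in X\setminus\{a\}$, the injectivity of the shift guarantees that $x_0,\ldots,x_{n-1}$ are pairwise distinct.

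Combining these observations,
\begin{equation*}
c(a+x_0+\cdots+x_{n-1})=c\!\left(\sum_{i<n}x_i+na'\right)=c\!\left(\sum_{i<n}(x_i+a')\right)=\gamma,
\end{equation*}
and the elements $a,x_0,\ldots,x_{n-1}$ are $n+1$ pairwise distinct members of $X$. Since $\gamma<\theta$ and the $\lambda$-sized set $X$ were arbitrary, $c$ witnesses $G\nrightarrow[\lambda]_\theta^{\fs_{n+1}}$.

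There is essentially no obstacle in this argument; the entire content lies in spotting that ``$n$-divisibility'' is precisely the right hypothesis to convert the shift $a'$ (used $n$ times, once per summand of $X'$) into a single additional element $a\in X$. The only mild care needed is to verify the cardinality and distinctness claims, both of which follow immediately from the fact that translation by $a'$ is a bijection of $G$.
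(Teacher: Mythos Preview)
Your argument is correct and essentially identical to the paper's own proof: both pick an element $a\in X$, use $n$-divisibility to find $a'$ with $na'=a$, apply the $\fs_n$ hypothesis to the shifted set $\{x+a'\mid x\in X\setminus\{a\}\}$, and then unwind the shift to exhibit an $(n+1)$-sum in $X$ of the desired colour. The only differences are notational.
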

\begin{proof}
Suppose that $G$ is an $n$-divisible abelian group such that
$G\nrightarrow[\lambda]_\theta^{\fs_n}$ holds,
as witnessed by a colouring $c:G\longrightarrow\theta$.
To see that it is also the case that for all $X\subseteq G$ with $|X|=\lambda$,
$c[\fs_{n+1}(X)]=\theta$, grab an arbitrary $X\subseteq G$ with $|X|=\lambda$.
Pick an element $x\in X$, and use $n$-divisibility to obtain an element
$z\in G$ such that $nz=x$. Now, let
\begin{equation*}
Y:=\left\{y+z\mid y\in X\setminus\{x\}\right\}.
\end{equation*}
Then $Y$ is a subset of $G$ of cardinality $\lambda$,
so that for each colour $\delta<\theta$ we can find $n$ distinct elements
$y_1+z,\ldots,y_n+z\in Y$
such that the sum
\begin{equation*}
(y_1+z)+\cdots+(y_n+z)=y_1+\cdots+y_n+nz=y_1+\cdots+y_n+x
\end{equation*}
is an element of $\fs_{n+1}(X)$ that receives colour $\delta$.
\end{proof}

The preceding lemma yields a fairly
general result concerning $\fs_n$, where the main piece
of information seems to be the cardinality of the commutative
cancellative semigroup $G$, rather than the semigroup itself.

\begin{corollary}\label{transfertwotow}
Suppose that $\theta\le\kappa$ are cardinals such that $G\nrightarrow[\lambda]_\theta^{\fs_2}$
holds for all commutative cancellative semigroups of cardinality $\kappa$.

Then, for every commutative cancellative semigroup $G$ of cardinality $\kappa$,
there exists a colouring $c$ that simultaneously witnesses $G\nrightarrow[\lambda]_\theta^{\fs_n}$ for all integers $n\ge2$.
\end{corollary}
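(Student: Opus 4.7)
The plan is to pass from $G$ to a divisible enveloping group of the same cardinality, apply the hypothesis there to obtain an $\fs_2$-colouring, and then bootstrap up to every $\fs_n$ via Lemma~\ref{transfertwotok}, using that divisibility is preserved in the passage. Restricting back to $G$ at the end will give the desired single colouring that works for all $n\ge 2$ simultaneously.

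Concretely, given $G$ of cardinality $\kappa$, I would invoke Lemma~\ref{embedding} to embed $G$ into $G'' := \bigoplus_{\alpha<\kappa}G_\alpha$, where each $G_\alpha$ is a countable divisible abelian group (either $\mathbb Q$ or $\mathbb Z(p^\infty)$). A direct sum of divisible groups is divisible (a fixed preimage under multiplication by $n$ can be chosen coordinatewise on the finite support), so $G''$ is divisible, and in particular $n$-divisible for every $n\in\mathbb N$. Moreover, since each $G_\alpha$ is countable and there are $\kappa$ many summands, $|G''|=\kappa$. Since $G''$ is a commutative cancellative semigroup of cardinality $\kappa$, the hypothesis of the corollary furnishes a colouring $c:G''\rightarrow\theta$ that witnesses $G''\nrightarrow[\lambda]_\theta^{\fs_2}$.

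Now I would argue by induction on $n\ge 2$ that this very same $c$ witnesses $G''\nrightarrow[\lambda]_\theta^{\fs_n}$. The base case $n=2$ is the choice of $c$, and the inductive step is exactly the content of Lemma~\ref{transfertwotok}, applicable because $G''$ is $n$-divisible for every $n$. Set $\tilde c := c\restriction G$. Fix an arbitrary $X\in [G]^\lambda$, an integer $n\ge 2$, and a colour $\delta<\theta$. Viewing $X$ as a subset of $G''$ of cardinality $\lambda$, the property of $c$ yields pairwise distinct $x_1,\ldots,x_n\in X$ with $c(x_1+\cdots+x_n)=\delta$; since $G$ is a subsemigroup of $G''$, the sum $x_1+\cdots+x_n$ lies in $G$, and hence $\tilde c(x_1+\cdots+x_n)=\delta$. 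Since $n$ and $\delta$ were arbitrary, $\tilde c$ is a single colouring that simultaneously witnesses $G\nrightarrow[\lambda]^{\fs_n}_\theta$ for every $n\ge 2$.

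The only place to be careful is the cardinality bookkeeping that guarantees $|G''|=\kappa$ so that the hypothesis is applicable to $G''$; this is where the ``removal of spurious summands'' argument inside the proof of Lemma~\ref{embedding} is being used implicitly. Everything else amounts to chaining Lemmas~\ref{embedding} and~\ref{transfertwotok} and observing that restriction of a colouring from a superstructure to a subsemigroup preserves the negative partition relation, since $\fs_n(X)\s G$ whenever $X\s G$.
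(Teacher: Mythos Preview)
Your proof is correct and follows essentially the same approach as the paper's own proof: embed $G$ into the divisible direct sum $G''=\bigoplus_{\alpha<\kappa}G_\alpha$ of cardinality $\kappa$, apply the hypothesis to get an $\fs_2$-colouring of $G''$, lift to all $\fs_n$ via Lemma~\ref{transfertwotok}, and restrict back to $G$. The only differences are cosmetic---you spell out a few details (coordinatewise divisibility, why the restriction works, the cardinality bookkeeping) that the paper leaves implicit.
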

\begin{proof}
Let $G$ be any commutative cancellative semigroup $G$ with $|G|=\kappa$.
By Lemma~\ref{embedding}, we may embed $G$ into a direct sum
$G'=\bigoplus_{\alpha<\kappa}G_\alpha$, where each $G_\alpha$ is
countable and divisible. This implies that $G'$ is divisible,
and $|G'|=\kappa$. Thus, by our hypothesis, we may take a colouring
$d:G'\longrightarrow\theta$ witnessing $G'\nrightarrow[\lambda]_\theta^{\fs_2}$.
Since $G'$ is divisible, we can use Lemma~\ref{transfertwotok},
to inductively prove, for every integer $n\ge2$, that $d$ witnesses
$G'\nrightarrow[\lambda]_\theta^{\fs_n}$.
Therefore $c:=d\upharpoonright G$ witnesses the
statement $G\nrightarrow[\lambda]_\theta^{\fs_n}$ for all integers $n\ge2$.
\end{proof}

We now move to proving no-go propositions. These will be obtained using the following simple proxy:

\begin{prop}\label{prop_FSn} If $G$ is a commutative semigroup satisfying $G\nrightarrow[\lambda]^{\fs_n}_\theta$,
then $\kappa\nrightarrow[\lambda]^n_\theta$ holds, for $\kappa:=|G|$.
\end{prop}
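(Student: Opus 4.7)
The plan is to prove the contrapositive-free direct implication by transporting a witnessing colouring from $G$ to $\kappa$ via an arbitrary injective enumeration.

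First, I would fix an injective enumeration $\{x_\alpha \mid \alpha < \kappa\}$ of the elements of $G$, and let $c : G \to \theta$ be a witness to $G \nrightarrow [\lambda]^{\fs_n}_\theta$. Since $G$ is commutative, the sum $x_{\alpha_0} + \cdots + x_{\alpha_{n-1}}$ depends only on the set $\{\alpha_0, \ldots, \alpha_{n-1}\}$, so I can define a colouring $c' : [\kappa]^n \to \theta$ by stipulating
\begin{equation*}
c'(\{\alpha_0, \ldots, \alpha_{n-1}\}) := c(x_{\alpha_0} + \cdots + x_{\alpha_{n-1}}),
\end{equation*}
where $\alpha_0 < \cdots < \alpha_{n-1}$.

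Next, I would verify that $c'$ witnesses $\kappa \nrightarrow [\lambda]^n_\theta$. Given any $A \in [\kappa]^\lambda$, set $X := \{x_\alpha \mid \alpha \in A\}$; by injectivity of the enumeration, $|X| = \lambda$. Thus by the choice of $c$, $c[\fs_n(X)] = \theta$, so for every $\delta < \theta$ there exist pairwise distinct $\alpha_0, \ldots, \alpha_{n-1} \in A$ with $c(x_{\alpha_0} + \cdots + x_{\alpha_{n-1}}) = \delta$, whence $c'(\{\alpha_0, \ldots, \alpha_{n-1}\}) = \delta$ and $\{\alpha_0, \ldots, \alpha_{n-1}\} \in [A]^n$. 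This shows $c'[[A]^n] = \theta$, as required.

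There is no real obstacle: the only point worth a moment's care is that commutativity is precisely what makes $c'$ well-defined as a function on \emph{sets} rather than on sequences, and that the distinctness of the $\alpha_i$'s (built into the definition of $[A]^n$) matches the distinctness requirement built into $\fs_n(X)$.
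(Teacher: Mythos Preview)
Your proof is correct and follows essentially the same approach as the paper: both fix an injective enumeration of $G$, pull back the witnessing colouring $c$ to a colouring of $[\kappa]^n$ via $\{\alpha_0,\ldots,\alpha_{n-1}\}\mapsto c(x_{\alpha_0}+\cdots+x_{\alpha_{n-1}})$, and verify that any $\lambda$-sized $A\subseteq\kappa$ yields a $\lambda$-sized $X\subseteq G$ on which all colours appear in $\fs_n(X)$. If anything, your write-up is slightly more careful than the paper's in explicitly noting that commutativity is what makes the induced colouring well-defined on unordered $n$-sets.
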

\begin{proof} Fix an injective enumeration $\{x_\alpha\mid\alpha<\kappa\}$ of a commutative semigroup $G$,
along with a colouring $c:G\longrightarrow\theta$ witnessing $G\nrightarrow[\lambda]^{\fs_n}_\theta$.
Define a colouring $d:[\kappa]^n\longrightarrow\theta$ by stipulating:
$$d(\alpha_1,\ldots,\alpha_n)=c(x_{\alpha_1}+\cdots+x_{\alpha_n}).$$

Now, given $Y\in [\kappa]^\lambda$, we have that $\{ x_\alpha\mid \alpha\in Y\}$
is a $\lambda$-sized subset of $G$ and hence for every colour $\delta<\theta$,
we may find ${\alpha_1},\ldots,{\alpha_n}$ in $Y$ such that $c(x_{\alpha_1}+\cdots+x_{\alpha_n})=\delta$,
so that $d(\alpha_1,\ldots,\alpha_n)=\delta$.
\end{proof}

\begin{corollary} There exists an uncountable cardinal $\kappa$ such that for every commutative semigroup $G$ of cardinality $\kappa$,
$G\nrightarrow[\kappa]^{\fs_n}_\omega$ fails for all $n<\omega$.

In particular, there exists an uncountable abelian group $G$ for which $G\nrightarrow[\omega_1]^{\fs_n}_\omega$ fails for all $n<\omega$.
\end{corollary}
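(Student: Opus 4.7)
The plan is to derive this corollary as a direct contrapositive of Proposition~\ref{prop_FSn}. That proposition tells us that whenever $|G|=\kappa$ and $G\nrightarrow[\lambda]^{\fs_n}_\theta$ holds, then so does $\kappa\nrightarrow[\lambda]^n_\theta$; equivalently, $\kappa\to[\lambda]^n_\theta$ already forces $G\nrightarrow[\lambda]^{\fs_n}_\theta$ to fail. Consequently, to establish the first assertion it suffices to exhibit a single uncountable cardinal $\kappa$ satisfying $\kappa\to[\kappa]^n_\omega$ simultaneously for every positive integer $n$.

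For this, I would invoke a large cardinal hypothesis and take $\kappa$ to be weakly compact (any Ramsey or measurable cardinal works equally well). Such a $\kappa$ satisfies the full positive relation $\kappa\to(\kappa)^n_\mu$ for all $n<\omega$ and all $\mu<\kappa$; in particular $\kappa\to(\kappa)^n_\omega$, which trivially entails the weaker square-bracket relation $\kappa\to[\kappa]^n_\omega$. Feeding this into Proposition~\ref{prop_FSn} yields the first conclusion: for every commutative semigroup $G$ of cardinality $\kappa$ and every $n<\omega$, $G\nrightarrow[\kappa]^{\fs_n}_\omega$ fails.

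The ``in particular'' clause follows by specializing $G$ to any uncountable abelian group of cardinality $\kappa$ (for instance, $\bigoplus_{\alpha<\kappa}\mathbb Z$) and then invoking a routine monotonicity: $G\nrightarrow[\omega_1]^{\fs_n}_\omega$ is strictly stronger than $G\nrightarrow[\kappa]^{\fs_n}_\omega$, because any colouring witnessing the former automatically witnesses the latter. Indeed, inside each $X\in[G]^\kappa$ one may select $Y\in[X]^{\omega_1}$, and then $\fs_n(Y)\s\fs_n(X)$ gives $c[\fs_n(X)]\supseteq c[\fs_n(Y)]=\omega$. Thus the failure of $G\nrightarrow[\kappa]^{\fs_n}_\omega$ established above immediately propagates to the failure of $G\nrightarrow[\omega_1]^{\fs_n}_\omega$. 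The only genuine obstacle is the selection of $\kappa$ itself: a large cardinal assumption appears indispensable here, since the small uncountable cardinals for which one has control typically exhibit strong negative relations such as $\kappa\nrightarrow[\kappa]^2_\omega$, thereby ruling them out. This fits the pattern of earlier results in the paper that rely on large cardinal hypotheses, such as Proposition~\ref{p29} and Corollary~\ref{cor29}.
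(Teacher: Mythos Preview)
Your overall strategy---reduce to a positive square-bracket relation via Proposition~\ref{prop_FSn} and then choose $\kappa$ so that $\kappa\to[\kappa]^n_\omega$ holds for all $n$---is exactly the route the paper takes, and your derivation of the ``in particular'' clause by monotonicity in the $\lambda$-parameter is also correct.

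The genuine gap is in your choice of $\kappa$. The corollary is stated as an outright $\zfc$ theorem, not a consistency result, and your proof does not deliver that: it only shows the statement holds \emph{if} a weakly compact cardinal exists. Your closing claim that ``a large cardinal assumption appears indispensable here'' is simply false. The paper takes $\kappa:=\beth_\omega$, a cardinal that exists provably in $\zfc$, and invokes Theorem~54.1 of \cite{MR795592} to conclude that $\beth_\omega\to[\beth_\omega]^n_\omega$ holds for every $n<\omega$; this contradicts $\beth_\omega\nrightarrow[\beth_\omega]^n_\omega$, so by Proposition~\ref{prop_FSn} no semigroup $G$ of size $\beth_\omega$ can satisfy $G\nrightarrow[\beth_\omega]^{\fs_n}_\omega$. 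The phenomenon you are missing is that strong limit singular cardinals enjoy surprisingly good positive partition relations even though they are not ``large'' in any consistency-strength sense; this is precisely what makes Theorem~A of the paper optimal (compare Corollary~\ref{c27} with the corollary you are proving). Replace your weakly compact cardinal with $\beth_\omega$ and the argument becomes a correct $\zfc$ proof.
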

\begin{proof} The statement holds true for $\kappa:=\beth_\omega$,
as otherwise, by Proposition~\ref{prop_FSn},
there exists some $n<\omega$ for which $\beth_\omega\nrightarrow[\beth_\omega]^n_\omega$ holds,
contradicting Theorem~54.1 of \cite{MR795592}.

In particular, by picking any abelian group $G$ of cardinality $\beth_\omega$,
we infer that $G\nrightarrow[\omega_1]^{\fs_n}_\omega$ fails for all $n<\omega$.
\end{proof}

\begin{corollary}\label{c410} Suppose that $\kappa$ is a weakly compact cardinal.

Then in the generic extension for adding $\kappa$ many Cohen reals,
for every commutative semigroup $G$ of size continuum, $G\nrightarrow[\mathfrak c]^{\fs_n}_{\omega_1}$ fails for every integer $n\ge2$.
\end{corollary}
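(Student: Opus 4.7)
The plan is to mimic the proof of Proposition~\ref{p29}, but at a single exponent $n$ at a time. The critical trade-off is that, whereas Proposition~\ref{p29} required simultaneous homogeneity at \emph{all} finite exponents (and hence appealed to an $\omega_1$-Erd\H{o}s cardinal), here we need homogeneity only at one fixed $n\ge 2$, so the weak compactness of $\kappa$---which delivers $\kappa\rightarrow(\kappa)^n_\lambda$ for every $\lambda<\kappa$---should suffice.

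Explicitly: let $\mathbb P$ be the usual forcing for adding $\kappa$ many Cohen reals, so that $V^{\mathbb P}\models\mathfrak c=\kappa$. Fix in $V^{\mathbb P}$ a commutative semigroup $(G,*)$ of size $\mathfrak c$ together with a colouring $c:G\rightarrow\omega_1$. Exactly as in Proposition~\ref{p29}, we may assume the underlying set of $G$ is $\kappa$ itself, and we pick $\mathbb P$-names $\mathring{*}$ and $\mathring{c}$. Working in $V$, I would then define a colouring $d:[\kappa]^n\rightarrow[\omega_1]^{\le\omega}$ by stipulating that $d(\alpha_0,\ldots,\alpha_{n-1})$ consists of all $\delta<\omega_1$ for which some $p\in\mathbb P$ forces $\mathring{c}(\check\alpha_0\mathring{*}\cdots\mathring{*}\check\alpha_{n-1})=\check\delta$. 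The \textit{ccc}-ness of $\mathbb P$ guarantees that each $d$-value is a \emph{countable} subset of $\omega_1$; and since $\kappa$ is inaccessible, $|[\omega_1]^{\le\omega}|<\kappa$.

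The next step invokes the weak compactness of $\kappa$: since $\kappa\rightarrow(\kappa)^n_\lambda$ for every $\lambda<\kappa$, we can pick $H\in[\kappa]^\kappa$ on which $d$ is constant, say with value $A\in[\omega_1]^{\le\omega}$. As $\mathbb P$ is \textit{ccc}, $H$ still has size $\kappa=\mathfrak c$ in $V^{\mathbb P}$, and by the very definition of $d$ we have $c[\fs_n(H)]\subseteq A$ in $V^{\mathbb P}$. Since $A$ is countable, $c[\fs_n(H)]\neq\omega_1$, which is exactly the failure of $G\nrightarrow[\mathfrak c]^{\fs_n}_{\omega_1}$.

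The main obstacle I foresee is essentially bookkeeping rather than real mathematics: first, one needs to cite the classical fact that a weakly compact $\kappa$ satisfies $\kappa\rightarrow(\kappa)^n_\lambda$ for every $n<\omega$ and $\lambda<\kappa$ (a consequence of $\Pi^1_1$-indescribability, or of an induction on $n$ starting from the $n=2$ case via the Erd\H{o}s--Rado style argument); and second, since $G$, $*$, and $c$ all live in $V^{\mathbb P}$ rather than $V$, one must take care to define $d$ purely via $\mathbb P$-names in the ground model, exactly as was done in Proposition~\ref{p29}. Neither point is difficult, but both have to be spelled out for the argument to be airtight.
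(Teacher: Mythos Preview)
Your proposal is correct and follows essentially the same strategy as the paper: define in $V$ a colouring $d:[\kappa]^n\rightarrow[\omega_1]^{\le\omega}$ whose values are countable by \textit{ccc}, invoke weak compactness (via $\kappa\rightarrow(\kappa)^n_\lambda$ for $\lambda<\kappa$) to obtain a $\kappa$-sized homogeneous $H$, and conclude that the image on $H$ is trapped in a single countable set of colours.

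The one organisational difference is that the paper first invokes Proposition~\ref{prop_FSn} to reduce the algebraic statement $G\nrightarrow[\mathfrak c]^{\fs_n}_{\omega_1}$ to the purely combinatorial $\kappa\nrightarrow[\kappa]^n_{\omega_1}$, and then shows the latter fails in $V^{\mathbb P}$; thus the paper never needs to name the semigroup operation $\mathring{*}$, only a colouring $\mathring{c}:[\kappa]^n\rightarrow\omega_1$. You instead work directly with names for both $*$ and $c$, in closer analogy with Proposition~\ref{p29}. Both routes are fine; the paper's reduction is marginally cleaner (it isolates the combinatorial core and avoids carrying the semigroup structure through the forcing argument), while yours makes the parallel with Proposition~\ref{p29} more explicit. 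The ``bookkeeping obstacles'' you flag are handled exactly as you anticipate and pose no real difficulty.
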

\begin{proof}
Let us remind the reader that as $\kappa$ is weakly compact,
$\kappa$ is strongly inaccessible and satisfies that for every $\theta<\kappa$, every positive integer $n$, and every colouring $d:[\kappa]^n\rightarrow\theta$,
there exists some $H\in[\kappa]^{\kappa}$ such that $d\restriction[H]^n$ is constant.

Let $\mathbb P$ be the notion of forcing for adding $\kappa$ many Cohen reals.
By Proposition~\ref{prop_FSn}, it suffices to show that in $V^{\mathbb P}$, we have $\kappa\rightarrow[\kappa]^n_{\omega_1}$ for every integer $n\ge2$.

Let $\mathring{c}$ be a $\mathbb P$-name for an arbitrary colouring $c:[\kappa]^n\rightarrow\omega_1$ in $V^{\mathbb P}$.
Working in $V$, define a colouring $d:[\kappa]^n\rightarrow[\omega_1]^{\le\omega}$
by stipulating:
$$d(\alpha_1,\ldots,\alpha_n):=\{ \delta<\omega_1\mid \exists p\in\mathbb P[p\Vdash``\mathring{c
}(\check{\alpha_1},\ldots,\check{\alpha_n})=\check{\delta}"]\}.$$

Since $\mathbb P$ is \textit{ccc}, the range of $d$ indeed consists of \emph{countable} subsets of $\omega_1$.
As $\kappa$ is weakly compact, $\left|[\omega_1]^{\le\omega}\right|<\kappa$ and
we may pick some $H\in[\kappa]^{\kappa}$ such that $d``[H]^n$ is a singleton, say, $\{A\}$.
Evidently,  $$\Vdash ``\mathring{c}[[\check{H}]^{\check{n}}]\s \check A".$$

As $A$ is countable and $\mathbb P$ does not collapse cardinals, $\mathbb P$ forces that $\mathring{c}``[\check{H}]^{\check{n}}$ omits at least one colour.
\end{proof}

\section{Some corollaries concerning the real line}\label{therealline}
Hindman, Leader and Strauss~\cite[Theorem~3.2]{hindman-leader-strauss} proved
that $\mathbb R\nrightarrow[\mathfrak c]_2^{\fs_n}$ holds for every integer $n\ge2$.
It turns out that it is possible to  increase the number of colours from $2$ to $\omega$:
\begin{corollary} $\mathbb R\nrightarrow[\mathfrak c]_\omega^{\fs_n}$ holds for every integer $n\ge2$.
\end{corollary}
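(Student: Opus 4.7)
The plan is to assemble the corollary directly from three earlier results in the paper, with essentially no new work beyond an invocation of divisibility of $\mathbb R$.

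First I would establish $\axiom(\mathfrak c,\omega)$. Observe that $\lambda:=\aleph_0$ satisfies $2^{<\lambda}=\aleph_0=\lambda$, so Theorem~\ref{power} applies and yields $\axiom(\cf(2^{\aleph_0}),\omega)$, i.e., $\axiom(\cf(\mathfrak c),\omega)$. If $\mathfrak c$ is regular there is nothing more to do; otherwise $\mathfrak c>\cf(\mathfrak c)$, and Proposition~\ref{singulars}(1) upgrades this to $\axiom(\mathfrak c,\omega)$. Either way, $\axiom(\mathfrak c,\omega)$ holds outright in $\zfc$.

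Next I would feed this into Theorem~\ref{maintheorem} with the abelian group $G:=(\mathbb R,+)$, which is commutative, cancellative, and of cardinality $\kappa=\mathfrak c$. This produces a colouring $c:\mathbb R\rightarrow\omega$ witnessing $\mathbb R\nrightarrow[\mathfrak c]_\omega^{\fs_2}$.

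Finally, to upgrade from $\fs_2$ to $\fs_n$ for arbitrary $n\ge 2$, I would use that $\mathbb R$ is divisible, so in particular $n$-divisible for every positive integer $n$. An induction on $n\ge 2$ using Lemma~\ref{transfertwotok} then shows that the same colouring $c$ witnesses $\mathbb R\nrightarrow[\mathfrak c]_\omega^{\fs_n}$ for every integer $n\ge 2$. Since each of the three ingredients is already proved, there is no real obstacle here: the only thing one might worry about is whether $\mathfrak c$ is regular, and that potential wrinkle is precisely what Proposition~\ref{singulars}(1) is there to smooth out.
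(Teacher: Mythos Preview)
Your proposal is correct and follows essentially the same route as the paper. The paper packages your step~1 as an appeal to Corollary~\ref{bethsuccessor} (since $\mathfrak c=\beth_1$), and your step~3 as an appeal to Corollary~\ref{transfertwotow}; you have simply unpacked these corollaries into their constituent ingredients (Theorem~\ref{power} plus Proposition~\ref{singulars}(1), and Lemma~\ref{transfertwotok} plus divisibility of $\mathbb R$, respectively), which if anything is slightly cleaner here because $\mathbb R$ is already divisible and needs no further embedding.
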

\begin{proof} As $|\mathbb R|=\mathfrak c=2^{\aleph_0}=2^{\beth_0}=\beth_1$, we infer from Corollary~\ref{bethsuccessor} that $\axiom(\mathfrak c,\omega)$ holds. Now appeal to Theorem~\ref{maintheorem2} and Corollary~\ref{transfertwotow}.
\end{proof}

On the grounds of $\zfc$ alone,  it is impossible to increase the number of colours to $\omega_1$:
\begin{corollary}\label{justify}
If there exists a weakly compact cardinal, then there exists a model of $\zfc$
in which $\mathbb R\nrightarrow[\mathfrak c]^{\fs_n}_{\omega_1}$ fails for every integer $n\ge2$.
Furthermore, in this model, $\mathfrak c$ is an inaccessible cardinal which is weakly compact in G\"odel's constructible universe.
\end{corollary}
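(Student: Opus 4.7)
The plan is to deduce Corollary~\ref{justify} as a near-immediate consequence of Corollary~\ref{c410}, after first passing from the ambient universe to $L$ so that the ground model of the forcing construction is known to satisfy GCH.

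First, I would invoke the standard downward absoluteness of weak compactness: if $\kappa$ is weakly compact in $V$, then $\kappa$ is weakly compact in $L$ as well, since weak compactness may be characterized as inaccessibility together with $\Pi^1_1$-indescribability of $V_\kappa$, and both ingredients descend to $L$ (in particular, $(V_\kappa)^L=L_\kappa$ is $\Pi^1_1$-elementary for this purpose). Thus we may without loss of generality work over the ground model $L$ and let $\kappa$ denote a weakly compact cardinal there.

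Next, I would let $\mathbb P := \mathrm{Fn}(\kappa\times\omega,2)$ be the standard poset for adding $\kappa$ many Cohen reals, and pass to $L[G]$ for a $\mathbb P$-generic $G$. Since $\mathbb P$ is ccc, no cardinals or cofinalities are disturbed. A routine nice-names count, exploiting GCH in $L$ together with the inaccessibility of $\kappa$ to obtain $|\mathbb P|^{\aleph_0}=\kappa$, shows $2^{\aleph_0}=\kappa$ in $L[G]$; in particular $\mathfrak c=\kappa$ there. Now I would appeal to Corollary~\ref{c410} verbatim to deduce that in $L[G]$, for \emph{every} commutative semigroup $G$ of size continuum, $G\nrightarrow[\mathfrak c]^{\fs_n}_{\omega_1}$ fails for every integer $n\ge 2$.

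Specializing this to $G=(\mathbb R,+)$, which is a commutative (even cancellative) semigroup of size $\mathfrak c$ in $L[G]$, yields the failure of $\mathbb R\nrightarrow[\mathfrak c]^{\fs_n}_{\omega_1}$ for every $n\ge 2$, as required. The ``furthermore'' clause is then automatic: the ordinal $\mathfrak c$ in $L[G]$ equals $\kappa$, which was chosen to be weakly compact---hence, \emph{a fortiori}, inaccessible---in $L$. There is no substantive obstacle in this deduction; all the heavy lifting has already been performed in Corollary~\ref{c410} (and, underneath it, in the combinatorial principles of Section~\ref{infcombinatorics}). The only delicate point worth flagging is the absoluteness of weak compactness to $L$, but this is a classical fact requiring no new argument.
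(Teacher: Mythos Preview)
Your proof is correct and follows essentially the same route as the paper: pass to $L$ via downward absoluteness of weak compactness (the paper simply cites \cite[Theorem~17.22]{MR1940513} for this), force with $\kappa$ many Cohen reals, and invoke Corollary~\ref{c410}. The only cosmetic difference is that you spell out the nice-names count for $2^{\aleph_0}=\kappa$ and the $\Pi^1_1$-indescribability rationale for absoluteness, whereas the paper leaves these implicit.
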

\begin{proof}  Suppose that $\kappa$ is a weakly compact cardinal.
Work in G\"odel's constructible universe, $L$. Then $\kappa$ is still a weakly compact cardinal (see \cite[Theorem 17.22]{MR1940513}),
and if $\mathbb P$ denotes the notion of forcing for adding $\kappa$ many Cohen reals,
then by Corollary~\ref{c410}, the forcing extension $L^{\mathbb P}$
is a model in which $\mathfrak c$ is an inaccessible cardinal that is weakly compact in G\"odel's constructible universe,
and $\mathbb R\nrightarrow[\mathfrak c]^{\fs_n}_{\omega_1}$ fails for every integer $n\ge2$.
\end{proof}

However, assuming an anti-large cardinal hypothesis, the number of colours may be increased:

\begin{corollary}\label{c51} Each of the following imply that $\mathbb R\nrightarrow[\mathfrak c]^\sumsets_{\mathfrak c}$ holds:
\begin{enumerate}
\item $\mathfrak c=\mathfrak b$ (e.g., Martin's Axiom holds);
\item $\mathfrak c$ is a successor of a regular cardinal (e.g., $\ch$ holds);
\item $\mathfrak c$ is a successor of a singular cardinal of countable cofinality;
\item $\mathfrak c$ is a regular cardinal that is not weakly compact in G\"odel's constructible universe.
\end{enumerate}
\end{corollary}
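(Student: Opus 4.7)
The plan is to show, in each of the four clauses, that the principle $\axiomi(\mathfrak{c},\mathfrak{c},\omega)$ holds. Once that is secured, Theorem~\ref{maintheorem2} applied to the commutative cancellative group $(\mathbb{R},+)$ (which has cardinality $\mathfrak{c}$) immediately yields $\mathbb{R}\nrightarrow[\mathfrak{c}]^{\sumsets}_{\mathfrak{c}}$. In clauses (1), (3) and (4) I shall derive the principle by first establishing $\pr_1(\mathfrak{c},\mathfrak{c},\mathfrak{c},\omega)$ and then invoking Lemma~\ref{pr1lemma}; the side-condition ``$\mu^{<\chi}<\kappa$ for every $\mu<\kappa$'' of that lemma is trivially satisfied when $\chi=\omega$ and $\mathfrak{c}$ is regular, which is the case in all four clauses.

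For clause~(2), $\mathfrak{c}$ is the successor of a regular cardinal, so it admits a nonreflecting stationary set, and Corollary~\ref{c36} delivers $\axiomi(\mathfrak{c},\mathfrak{c},\omega)$ directly. For clause~(3), write $\mathfrak{c}=\lambda^+$ with $\lambda$ singular and $\cf(\lambda)=\omega$; since $\mathfrak{c}=2^{\aleph_0}$, a short K\"onig-theorem calculation yields
\[
\lambda^+ = 2^{\aleph_0} \leq \lambda^{\aleph_0} \leq (\lambda^+)^{\aleph_0} = \lambda^+ \cdot 2^{\aleph_0} = \lambda^+,
\]
so $\lambda^{\cf(\lambda)}=\lambda^+$ and hence $\pp(\lambda)=\lambda^+$; Fact~\ref{factpr1}(\ref{pp}) then yields the desired $\pr_1$.

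For clause~(4), the subcase $\mathfrak{c}=\omega_1$ is absorbed into clause~(2). When $\mathfrak{c}>\omega_1$, by Jensen's classical theorem every regular uncountable cardinal that is not weakly compact in $L$ carries a $\square(\cdot)$-sequence built in $L$, and such a sequence remains a $\square(\mathfrak{c})$-sequence in $V$ whenever $\mathfrak{c}$ is still regular in $V$; thus $\square(\mathfrak{c})$ holds, and Fact~\ref{factpr1}(2) supplies the $\pr_1$. For clause~(1), $\mathfrak{c}=\mathfrak{b}$ is regular: when $\mathfrak{c}=\omega_1$, Fact~\ref{factpr1}(1) applies verbatim, and when $\mathfrak{c}>\omega_1$ one invokes Todor\v{c}evi\'c's ZFC generalization $\pr_1(\mathfrak{b},\mathfrak{b},\mathfrak{b},\omega)$, whose proof runs the oscillation construction of the $\omega_1$ case on a dominating $\mathfrak{b}$-sequence in $\omega^\omega$ in place of a strictly-increasing diagonal sequence.

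The principal difficulty is clause~(1) with $\mathfrak{c}=\mathfrak{b}>\omega_1$: the lifting of the oscillation technology from $\omega_1$ to arbitrary regular $\mathfrak{b}$ is not packaged in Fact~\ref{factpr1} and must be imported from Todor\v{c}evi\'c's work. The other clauses are comparatively painless: (2) reduces immediately to Corollary~\ref{c36}, (3) hinges on the K\"onig-style identity above, and (4) rests on the standard absoluteness of $\square$-sequences between $L$ and $V$.
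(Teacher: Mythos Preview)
Your proof is correct and follows essentially the same route as the paper's: in each clause you establish $\axiomi(\mathfrak c,\mathfrak c,\omega)$ (directly or via $\pr_1(\mathfrak c,\mathfrak c,\mathfrak c,\omega)$ and Lemma~\ref{pr1lemma}) and then invoke Theorem~\ref{maintheorem2}; the paper does the same, merely packaging clauses~(2)--(4) through Corollary~\ref{maincorollary} rather than citing Fact~\ref{factpr1} and Lemma~\ref{pr1lemma} separately. The one place you are more careful than the paper is clause~(1) with $\mathfrak c=\mathfrak b>\omega_1$: Fact~\ref{factpr1}(1) as stated in the paper literally requires $\mathfrak b=\omega_1$, and you are right that the general $\pr_1(\mathfrak b,\mathfrak b,\mathfrak b,\omega)$ must be drawn from Todor\v{c}evi\'c's oscillation work; the paper's terse citation implicitly relies on the cited reference \cite[\S1, Lemma~1.0]{MR980949} delivering that stronger statement.
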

\begin{proof} \begin{enumerate}
\item  By Lemma~\ref{pr1lemma}, Fact~\ref{factpr1}(1), and Theorem~\ref{maintheorem2}.

\item By Corollary~\ref{maincorollary}(2).

\item If $\mathfrak c=\lambda^+$ and $\cf(\lambda)=\omega$, then $\lambda^{\cf(\lambda)}=\mathfrak c=\lambda^+$.
Now, appeal to Corollary~\ref{maincorollary}(3).

\item By \cite{MR908147}, if $\kappa$ is a regular uncountable cardinal which is not a weakly compact cardinal in G\"odel's constructible universe,
then $\square(\kappa)$ holds. Now, appeal to Corollary~\ref{maincorollary}(1). \qedhere
\end{enumerate}
\end{proof}

By a theorem of Milliken \cite[Theorem~9]{MR0505558}, $\ch$ entails $\mathbb R\nrightarrow[\mathfrak c]_{\omega_1}^{\fs_2}$.
We now derive the same conclusion (and even with superscript $\sumsets$) from an (again, optimal) anti-large cardinal hypothesis:

\begin{corollary}\label{c52} Each of the following imply that $\mathbb R\nrightarrow[\mathfrak c]^\sumsets_{\omega_1}$ holds:
\begin{enumerate}
\item $\mathfrak c$ is a successor cardinal  (e.g., $\ch$ holds);
\item $\cf(\mathfrak c)$ is a successor of a cardinal of uncountable cofinality;
\item $\cf(\mathfrak c)$ is not weakly compact in G\"odel's constructible universe.
\end{enumerate}
\end{corollary}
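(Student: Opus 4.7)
The plan is to deduce in each case that $\axiomi(\mathfrak{c},\omega_1,\omega)$ holds; Theorem~\ref{maintheorem2} applied with $G=\mathbb{R}$, $\kappa=\mathfrak{c}$, and $\theta=\omega_1$ then immediately delivers $\mathbb{R}\nrightarrow[\mathfrak{c}]_{\omega_1}^{\sumsets}$. A preliminary simplification: whenever $\mathfrak{c}$ is singular, the transfer from $\axiomi(\cf(\mathfrak{c}),\omega_1,\omega)$ to $\axiomi(\mathfrak{c},\omega_1,\omega)$ comes for free from Proposition~\ref{singulars}(2), whose proviso ``$\mu^{<\omega}<\mathfrak{c}$ for every $\mu<\mathfrak{c}$'' is trivial since $\mu^{<\omega}=\mu$. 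Hence in each of the three clauses it suffices to produce $\axiomi(\cf(\mathfrak{c}),\omega_1,\omega)$.

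For clause (1), $\mathfrak{c}=\lambda^+$ is regular, so $\cf(\mathfrak{c})=\mathfrak{c}$ and the target reduces to $\axiomi(\lambda^+,\omega_1,\omega)$. If $\lambda$ is regular, Corollary~\ref{c36} furnishes the stronger $\axiomi(\lambda^+,\lambda^+,\omega)$. If $\lambda$ is singular, I split on $\cf(\lambda)$: for $\cf(\lambda)=\omega$, the arithmetic $\lambda^{\aleph_0}\le(2^{\aleph_0})^{\aleph_0}=\lambda^+$ combined with K\"onig's theorem yields $\lambda^{\aleph_0}=\lambda^+$, so $\pp(\lambda)=\lambda^+$ and Fact~\ref{factpr1}(\ref{pp}) supplies $\pr_1(\lambda^+,\lambda^+,\lambda^+,\omega)$; for $\cf(\lambda)>\omega$, Fact~\ref{factpr1}(\ref{successor_of_singular}) supplies $\pr_1(\lambda^+,\lambda^+,\cf(\lambda),\cf(\lambda))$, whence monotonicity yields $\pr_1(\lambda^+,\lambda^+,\omega_1,\omega)$. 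In either subcase, Lemma~\ref{pr1lemma} with $\chi=\omega$ (whose arithmetic hypothesis is automatic) produces $\axiomi(\lambda^+,\omega_1,\omega)$.

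For clause (2), write $\cf(\mathfrak{c})=\mu^+$ with $\cf(\mu)$ uncountable. When $\mu$ is regular (necessarily uncountable), Corollary~\ref{c36} directly supplies $\axiomi(\mu^+,\mu^+,\omega)$; when $\mu$ is singular, Fact~\ref{factpr1}(\ref{successor_of_singular}) gives $\pr_1(\mu^+,\mu^+,\cf(\mu),\cf(\mu))$, which by $\cf(\mu)\ge\omega_1$ and monotonicity implies $\pr_1(\mu^+,\mu^+,\omega_1,\omega)$, and Lemma~\ref{pr1lemma} then yields $\axiomi(\mu^+,\omega_1,\omega)$. For clause (3), set $\kappa:=\cf(\mathfrak{c})$. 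If $\kappa=\omega_1$, then Theorem~\ref{lowertrace} is already the desired $\axiomi(\omega_1,\omega_1,\omega)$; if $\kappa>\omega_1$, the Todor\v{c}evi\'c-Jensen result cited in the proof of Corollary~\ref{c51}(4) forces $\square(\kappa)$, whereupon Fact~\ref{factpr1}(2) gives $\pr_1(\kappa,\kappa,\kappa,\omega)$ and Lemma~\ref{pr1lemma} delivers $\axiomi(\kappa,\omega_1,\omega)$. The proof is largely bookkeeping; the only step that deserves a moment's pause is the $\cf(\lambda)=\omega$ branch of clause (1), where one must observe that the ambient hypothesis $\lambda^+=2^{\aleph_0}$ automatically forces $\pp(\lambda)=\lambda^+$.
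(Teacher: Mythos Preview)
Your proof is correct and follows essentially the same route as the paper's. The only cosmetic difference is that the paper shortcuts clause~(1) by invoking Corollary~\ref{c51} to dispatch the subcases ``$\lambda$ regular'' and ``$\cf(\lambda)=\omega$'', and shortcuts clause~(3) by invoking Corollary~\ref{maincorollary}(1) rather than explicitly splitting $\kappa=\omega_1$ versus $\kappa>\omega_1$; you instead unfold these references and handle the subcases directly, which is perfectly fine (and arguably clearer).
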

\begin{proof} \begin{enumerate}
\item By Corollary~\ref{c51}, we may assume that $\mathfrak c$ is a successor of a singular cardinal of uncountable cofinality $\theta$.
Then, By Fact~\ref{factpr1}(7), Lemma~\ref{pr1lemma}, and Theorem~\ref{maintheorem2},
$\mathbb R\nrightarrow[\mathfrak c]^\sumsets_{\theta}$ holds.
In particular, $\mathbb R\nrightarrow[\mathfrak c]^\sumsets_{\omega_1}$ holds.
\item If $\cf(\mathfrak c)$ is a successor of a regular cardinal,
then by Corollary~\ref{c36}, $\axiomi(\cf(\mathfrak c),\cf(\mathfrak c),\omega)$ holds.
If $\cf(\mathfrak c)$ is a successor of a singular cardinal of cofinality $\theta$,
then, by Fact~\ref{factpr1}(7) and Lemma~\ref{pr1lemma}, $\axiomi(\cf(\mathfrak c),\theta,\omega)$ holds.
Altogether, if $\cf(\mathfrak c)$ is a successor of a cardinal of uncountable cofinality,
$\axiomi(\cf(\mathfrak c),\omega_1,\omega)$ holds.
So, by Proposition~\ref{singulars}, $\axiomi(\mathfrak c,\omega_1,\omega)$ holds.
By Theorem~\ref{maintheorem2}, then, $\mathbb R\nrightarrow[\mathfrak c]^\sumsets_{\omega_1}$ holds.
\item Let $\kappa:=\cf(\mathfrak c)$. By K\"onig's lemma, $\kappa$ is uncountable. By \cite{MR908147}, if $\kappa$ is a regular uncountable cardinal which is not a weakly compact cardinal in G\"odel's constructible universe,
then $\square(\kappa)$ holds. Thus, by Corollary~\ref{maincorollary}(1), $\mathbb R\nrightarrow[\mathfrak c]^\sumsets_{\kappa}$ holds.
In particular, $\mathbb R\nrightarrow[\mathfrak c]^\sumsets_{\omega_1}$ holds. \qedhere
\end{enumerate}
\end{proof}

As for $\fs$ sets, we have the following optimal results:

\begin{corollary} The following are equivalent:
\begin{itemize}
\item $\mathbb R\nrightarrow[\mathfrak c]^\fs_{\mathfrak c}$ holds;
\item $\mathfrak c$ is not a J\'onsson cardinal.
\end{itemize}
\end{corollary}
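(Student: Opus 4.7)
The plan is to observe that this equivalence is an immediate consequence of Corollary~\ref{c29} combined with the very definition of a J\'onsson cardinal. Recall that $\kappa$ is a \emph{J\'onsson cardinal} precisely when $\kappa\rightarrow[\kappa]^{<\omega}_\kappa$ holds, that is, when for every colouring $c:[\kappa]^{<\omega}\rightarrow\kappa$ there exists $H\in[\kappa]^\kappa$ with $c``[H]^{<\omega}\neq\kappa$. Hence $\mathfrak c$ fails to be J\'onsson if and only if the negative relation $\mathfrak c\nrightarrow[\mathfrak c]^{<\omega}_{\mathfrak c}$ holds.

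Since $(\mathbb R,+)$ is a commutative cancellative semigroup of cardinality $\mathfrak c$ (in fact an abelian group, as needed in the proof of Lemma~\ref{embedding}), the forward direction is handled by applying Corollary~\ref{c29} with $\lambda=\kappa=\theta=\mathfrak c$ in the implication $(1)\Rightarrow(2)$: if $\mathfrak c$ is not J\'onsson, i.e., $\mathfrak c\nrightarrow[\mathfrak c]^{<\omega}_{\mathfrak c}$ holds, then $G\nrightarrow[\mathfrak c]^\fs_{\mathfrak c}$ holds for \emph{every} commutative cancellative semigroup $G$ of cardinality $\mathfrak c$, and in particular for $G=\mathbb R$. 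Note that the hypothesis $\lambda\le\kappa$ uncountable in Corollary~\ref{c29} is met because $\mathfrak c\ge\aleph_1$.

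Conversely, if $\mathbb R\nrightarrow[\mathfrak c]^\fs_{\mathfrak c}$ holds, then clause $(3)$ of Corollary~\ref{c29} is witnessed by $G=\mathbb R$ (again with $\lambda=\kappa=\theta=\mathfrak c$), and the implication $(3)\Rightarrow(1)$ delivers $\mathfrak c\nrightarrow[\mathfrak c]^{<\omega}_{\mathfrak c}$, that is, $\mathfrak c$ is not J\'onsson.

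There is essentially no obstacle in this final step: all the heavy lifting has already been carried out in Theorem~\ref{multicube} and its Corollary~\ref{c29}, which effect the desired translation between a colouring of $G$ and a colouring of $[\mathfrak c]^{<\omega}$ via the support map $\supp(\cdot)$ inherited from the embedding of $G$ into a direct sum of countable divisible groups furnished by Lemma~\ref{embedding}.
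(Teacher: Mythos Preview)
Your proof is correct and takes essentially the same approach as the paper: the paper's own proof is the single line ``Appeal to Corollary~\ref{c29} with $\lambda=\kappa=\theta=\mathfrak c$,'' and you have simply spelled out what that appeal amounts to, together with the (standard) identification of J\'onsson cardinals with those $\kappa$ satisfying $\kappa\rightarrow[\kappa]^{<\omega}_\kappa$. One cosmetic remark: since the first bullet of the statement is $\mathbb R\nrightarrow[\mathfrak c]^\fs_{\mathfrak c}$, what you call the ``forward direction'' is really the backward one, but this does not affect the argument.
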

\begin{proof} Appeal to Corollary~\ref{c29} with $\lambda=\kappa=\theta=\mathfrak c$.
\end{proof}

\begin{corollary} The following are equivalent:
\begin{itemize}
\item $\mathbb R\nrightarrow[\omega_1]^\fs_{\omega_1}$ holds;
\item $(\mathfrak c,\omega_1)\twoheadrightarrow(\omega_1,\omega)$ fails.
\end{itemize}
\end{corollary}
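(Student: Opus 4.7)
The plan is to derive the equivalence as an immediate corollary of the combination of Corollary~\ref{c29} (for $\kappa=\mathfrak c$, $\lambda=\theta=\omega_1$) and Corollary~\ref{chang} (for $\theta=\omega$, $\kappa=\mathfrak c$), using the fact that $(\mathbb R,+)$ is itself a commutative cancellative semigroup of cardinality $\mathfrak c$.

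For the direction $(\Leftarrow)$, suppose $(\mathfrak c,\omega_1)\twoheadrightarrow(\omega_1,\omega)$ fails. Since $\omega$ is an infinite regular cardinal and $\mathfrak c>\omega$, Corollary~\ref{chang} applies (with $\theta=\omega$ and $\kappa=\mathfrak c$), yielding that $G\nrightarrow[\omega_1]^{\fs}_{\omega_1}$ holds for \emph{every} commutative cancellative semigroup $G$ of cardinality $\mathfrak c$. The additive group $(\mathbb R,+)$ is one such semigroup, so $\mathbb R\nrightarrow[\omega_1]^{\fs}_{\omega_1}$ holds.

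For the direction $(\Rightarrow)$, suppose $\mathbb R\nrightarrow[\omega_1]^{\fs}_{\omega_1}$ holds. As $(\mathbb R,+)$ is a commutative cancellative semigroup of cardinality $\mathfrak c$, clause~(3) of Corollary~\ref{c29} is satisfied with $\kappa=\mathfrak c$ and $\lambda=\theta=\omega_1$; invoking the implication $(3)\Rightarrow(1)$ of that corollary, we infer that $\mathfrak c\nrightarrow[\omega_1]^{<\omega}_{\omega_1}$ holds. Then, running the implication $(1)\Rightarrow(2)$ of the same corollary in the other direction, we conclude that $G\nrightarrow[\omega_1]^{\fs}_{\omega_1}$ holds for every commutative cancellative semigroup $G$ of cardinality $\mathfrak c$. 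A final appeal to Corollary~\ref{chang} then gives the failure of $(\mathfrak c,\omega_1)\twoheadrightarrow(\omega_1,\omega)$.

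There is no genuine obstacle here: the heavy lifting has already been done in Corollaries~\ref{c29} and \ref{chang}. The only subtlety worth flagging is that Corollary~\ref{chang} is stated as an equivalence involving \emph{all} semigroups of the prescribed cardinality, whereas the hypothesis we have access to concerns only the single group $(\mathbb R,+)$; the role of Corollary~\ref{c29} is precisely to bridge this gap by boosting the existence of one witnessing semigroup to the existence of a witness for every semigroup of that cardinality, via the intermediate Ramsey-theoretic statement $\mathfrak c\nrightarrow[\omega_1]^{<\omega}_{\omega_1}$.
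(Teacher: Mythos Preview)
Your proof is correct and follows essentially the same approach as the paper, which simply writes ``Appeal to Corollary~\ref{chang} with $\kappa=\mathfrak c$ and $\theta=\omega$.'' You have merely made explicit the role of Corollary~\ref{c29} in passing from the single semigroup $(\mathbb R,+)$ to all commutative cancellative semigroups of cardinality $\mathfrak c$; since Corollary~\ref{chang} is itself proved via Corollary~\ref{c29}, this bridging is already implicit in the paper's one-line proof.
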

\begin{proof} Appeal to Corollary~\ref{chang} with $\kappa=\mathfrak c$ and $\theta=\omega$.
\end{proof}

In particular, if there exists a Kurepa tree with $\mathfrak c$ many branches, then $\mathbb R\nrightarrow[\omega_1]^\fs_{\omega_1}$ holds.

\section*{Acknowledgement}
The first author would like to thank Andreas Blass for many fruitful discussions.


\begin{thebibliography}{10}

\bibitem{MR0297576}
Andreas Blass.
\newblock Weak partition relations.
\newblock {\em Proc. Amer. Math. Soc.}, 35:249--253, 1972.

\bibitem{MR906807}
Andreas Blass and Neil Hindman.
\newblock On strongly summable ultrafilters and union ultrafilters.
\newblock {\em Trans. Amer. Math. Soc.}, 304(1):83--97, 1987.

\bibitem{MR776283}
Robert Bonnet and Saharon Shelah.
\newblock Narrow {B}oolean algebras.
\newblock {\em Ann. Pure Appl. Logic}, 28(1):1--12, 1985.

\bibitem{MR3087059}
Todd Eisworth.
\newblock Getting more colors {II}.
\newblock {\em J. Symbolic Logic}, 78(1):17--38, 2013.

\bibitem{MR795592}
Paul Erd{\H{o}}s, Andr{\'a}s Hajnal, Attila M{\'a}t{\'e}, and Richard Rado.
\newblock {\em Combinatorial set theory: partition relations for cardinals},
  volume 106 of {\em Studies in Logic and the Foundations of Mathematics}.
\newblock North-Holland Publishing Co., Amsterdam, 1984.

\bibitem{fb-on-hindman}
David~J. Fern{\'a}ndez~Bret{\'o}n.
\newblock Hindman's theorem is only a countable phenomenon.
\newblock {\em arXiv preprint arXiv:1506.05834}, 2015.

\bibitem{MR1087344}
Matthew Foreman and W.~Hugh Woodin.
\newblock The generalized continuum hypothesis can fail everywhere.
\newblock {\em Ann. of Math. (2)}, 133(1):1--35, 1991.

\bibitem{MR0255673}
L{\'a}szl{\'o} Fuchs.
\newblock {\em Infinite abelian groups. {V}ol. {I}}.
\newblock Pure and Applied Mathematics, Vol. 36. Academic Press, New
  York-London, 1970.

\bibitem{MR0585558}
Fred Galvin.
\newblock Chain conditions and products.
\newblock {\em Fund. Math.}, 108(1):33--48, 1980.

\bibitem{MR0329900}
Fred Galvin and Saharon Shelah.
\newblock Some counterexamples in the partition calculus.
\newblock {\em J. Combinatorial Theory Ser. A}, 15:167--174, 1973.

\bibitem{MR0002514}
Kurt G{\"o}del.
\newblock {\em The {C}onsistency of the {C}ontinuum {H}ypothesis}.
\newblock Annals of Mathematics Studies, no. 3. Princeton University Press,
  Princeton, N. J., 1940.

\bibitem{MR0349574}
Neil Hindman.
\newblock Finite sums from sequences within cells of a partition of {$N$}.
\newblock {\em J. Combinatorial Theory Ser. A}, 17:1--11, 1974.

\bibitem{hindman-leader-strauss}
Neil Hindman, Leader Imre, and Dona Strauss.
\newblock Pairwise sums in colourings of the reals.
\newblock {\em to appear in Halin Memorial Volume}, 2015.

\bibitem{MR2893605}
Neil Hindman and Dona Strauss.
\newblock {\em Algebra in the {S}tone-\v {C}ech compactification}.
\newblock de Gruyter Textbook. Walter de Gruyter \& Co., Berlin, 2012.
\newblock Theory and applications, Second revised and extended edition [of
  MR1642231].

\bibitem{MR1940513}
Thomas Jech.
\newblock {\em Set theory}.
\newblock Springer Monographs in Mathematics. Springer-Verlag, Berlin, 2003.
\newblock The third millennium edition, revised and expanded.

\bibitem{MR0309729}
R.~Bj{\"o}rn Jensen.
\newblock The fine structure of the constructible hierarchy.
\newblock {\em Ann. Math. Logic}, 4:229--308; erratum, ibid. 4 (1972), 443,
  1972.
\newblock With a section by Jack Silver.

\bibitem{MR2731169}
Akihiro Kanamori.
\newblock {\em The higher infinite}.
\newblock Springer Monographs in Mathematics. Springer-Verlag, Berlin, second
  edition, 2009.
\newblock Large cardinals in set theory from their beginnings, Paperback
  reprint of the 2003 edition.

\bibitem{komjath}
P{\'e}ter Komj{\'a}th.
\newblock A certain 2-colouring of the reals.
\newblock {\em Real Analysis Exchange}, 41(1):227--231, 2016.

\bibitem{MR0505558}
Keith~R. Milliken.
\newblock Hindman's theorem and groups.
\newblock {\em J. Combin. Theory Ser. A}, 25(2):174--180, 1978.

\bibitem{MR2220104}
Justin~Tatch Moore.
\newblock A solution to the {$L$} space problem.
\newblock {\em J. Amer. Math. Soc.}, 19(3):717--736 (electronic), 2006.

\bibitem{pengwu}
Yinhe Peng and Liuzhen Wu.
\newblock A {L}indel\"of topological group with non-{L}indel\"of square.
\newblock {\em preprint}, 2014.

\bibitem{rinot05}
Assaf Rinot.
\newblock A topological reflection principle equivalent to {S}helah's strong
  hypothesis.
\newblock {\em Proc. Amer. Math. Soc.}, 136(12):4413--4416, 2008.

\bibitem{rinot13}
Assaf Rinot.
\newblock Transforming rectangles into squares, with applications to strong
  colorings.
\newblock {\em Adv. Math.}, 231(2):1085--1099, 2012.

\bibitem{rinot18}
Assaf Rinot.
\newblock Chain conditions of products, and weakly compact cardinals.
\newblock {\em Bull. Symb. Log.}, 20(3):293--314, 2014.

\bibitem{rinot15}
Assaf Rinot.
\newblock Complicated colorings.
\newblock {\em Math. Res. Lett.}, 21(6):1367--1388, 2014.

\bibitem{MR0323572}
Frederick Rowbottom.
\newblock Some strong axioms of infinity incompatible with the axiom of
  constructibility.
\newblock {\em Ann. Math. Logic}, 3(1):1--44, 1971.

\bibitem{MR955139}
Saharon Shelah.
\newblock Was {S}ierpi\'nski right? {I}.
\newblock {\em Israel J. Math.}, 62(3):355--380, 1988.

\bibitem{Sh:355}
Saharon Shelah.
\newblock $\aleph _{\omega +1}$ has a jonsson algebra.
\newblock In {\em Cardinal Arithmetic}, volume~29 of {\em Oxford Logic Guides}.
  Oxford University Press, 1994.

\bibitem{MR0371662}
Richard~A. Shore.
\newblock Square bracket partition relations in {$L$}.
\newblock {\em Fund. Math.}, 84(2):101--106, 1974.

\bibitem{MR1556708}
Wac{\l}aw Sierpi{\'n}ski.
\newblock Sur un probl\`eme de la th\'eorie des relations.
\newblock {\em Ann. Scuola Norm. Sup. Pisa Cl. Sci. (2)}, 2(3):285--287, 1933.

\bibitem{dani-bill}
Daniel Soukup and William Weiss.
\newblock Pairwise sums in colourings of the reals.
\newblock {\em preprint, available online at {\tt
  http://www.renyi.hu/\textasciitilde dsoukup/finset\_colouring.pdf}}, 2016.

\bibitem{MR799818}
Stevo Todor{\v{c}}evi{\'c}.
\newblock Remarks on chain conditions in products.
\newblock {\em Compositio Math.}, 55(3):295--302, 1985.

\bibitem{MR908147}
Stevo Todor{\v{c}}evi{\'c}.
\newblock Partitioning pairs of countable ordinals.
\newblock {\em Acta Math.}, 159(3-4):261--294, 1987.

\bibitem{MR980949}
Stevo Todor{\v{c}}evi{\'c}.
\newblock {\em Partition problems in topology}, volume~84 of {\em Contemporary
  Mathematics}.
\newblock American Mathematical Society, Providence, RI, 1989.

\end{thebibliography}
\end{document}